\documentclass[12pt,a4paper]{article}
\usepackage[T1]{fontenc}
\usepackage{mathtools}
\usepackage{amssymb}
\usepackage{amsthm}
\usepackage{amsmath}
\usepackage{xcolor}
\usepackage[numbers]{natbib}
\usepackage{hyperref}
\hypersetup{
	colorlinks=true,
	citecolor=blue,
	linkcolor=blue,  
	urlcolor=blue    
}
\usepackage{enumitem}
\usepackage{epstopdf}
\usepackage{subcaption}
\usepackage{authblk}
\author[1]{Yashaswini H L}
\author[1]{Vinay Madhusudanan}
\author[1]{Kavitha Koppula}
\author[1]{Kedukodi Babushri Srinivas}
\author[1]{Kuncham Syam Prasad\textsuperscript{*}}
\affil[1]{Department of Mathematics, Manipal Institute of Technology, Manipal Academy of Higher Education, Karnataka, India}
\date{*Corresponding author email: syamprasad.k@manipal.edu}
\theoremstyle{plain}
\newtheorem{theorem}{Theorem}[section]
\newtheorem{lemma}[theorem]{Lemma}
\newtheorem{corollary}[theorem]{Corollary}
\newtheorem{proposition}[theorem]{Proposition}

\theoremstyle{definition}
\newtheorem{definition}[theorem]{Definition}
\newtheorem{example}[theorem]{Example}

\newtheorem{remark}[theorem]{Remark}

\DeclareMathOperator{\id}{id}
\DeclareMathOperator{\im}{im}
\DeclareMathOperator{\gr}{gr}
\DeclareMathOperator{\CIG}{CIG}
\DeclareMathOperator{\SIG}{SIG}

\title{On Complement and Supplement Ideals of Nearrings}

\begin{document}

	\maketitle
	\begin{abstract}
		In this article we study complement ideals, and the dual concept of supplement ideals, in nearrings, both of which are generalizations of the concept of complement in a bounded modular lattice. We prove fundamental properties of complements and supplements in arbitrary nearrings. We then establish Galois connections between the ideal lattices of a nearring and of its matrix nearrings, yielding one-to-one correspondences between their respective complement and supplement ideals. We also define graphs associated with complement and supplement ideals of nearrings and study some of their combinatorial properties such as girth and clique number.
	\end{abstract}
	
	\subsection*{Mathematical Subject Classification}
		16Y30; 16S50; 05C25
	
	\subsection*{Keywords}
		Nearring; matrix nearring; complement ideal; supplement ideal; graph
	
	\section{Introduction} \label{sec:Intro}
	
	A nearring is a generalization of a ring. Rings can be considered as arising from group endomorphisms, and therefore as consisting of linear maps, in some sense, whereas nearrings naturally arise from the study of general maps on groups, and thus are non-linear. As a result, the theory of nearrings differs considerably from that of rings. The crucial difference between rings and nearrings is that the additive group of a nearring can be non-Abelian, and distributivity can be one-sided. Two related concepts are those of $N$-groups, which are nearring analogues of modules over a ring, and matrix nearrings. Meldrum and Van der Walt \cite{meldrum1986matrix} defined the notion of a matrix nearring over a nearring $N$, denoted by $M_n(N)$, as a particular subnearring of the nearring $M(N^n)$, the set of all maps from $N^n$ to $N^n$. Booth and Groenewald \cite{booth1991primeness} established a one-to-one correspondence between the set of equiprime ideals of $N$ and that of $M_n(N)$. Every ideal of $N$ can be mapped to ideals in $M_n(N)$ in two well-known ways, and all the ideals of $M_n(N)$ lying between these two ideals map back to the original ideal of $N$. These ideals form an interval in the ideal lattice $\mathfrak L(M_n(N))$. Meldrum and Meyer \cite{meldrum1996intermediate} have shown that any ideal lattice can occur as such an interval. Salvankar et al. \cite{salvankar2024essential} obtained one-to-one correspondences between essential ideals of a nearring and those of its matrix nearrings, and between superfluous ideals of a nearring and those of its matrix nearrings. For recent developments in matrix nearrings, we refer to \cite{meldrum1986matrix, meldrum1996intermediate, veldsman1992special, meyer1994chains}. The concepts of complement ideals and supplement $N$-subgroups were studied by Bhavanari and Reddy \cite{bhavanari1988, satyanarayana1991finite}.
	
	The study of algebraic structures using graphs began with the introduction of certain graphs associated with groups, now known as Cayley graphs, by Cayley \cite{cayley1878desiderata}. Later, several graphs were defined on groups, such as commuting graphs, and on rings, notably zero divisor graphs. In the same vein, Bhavanari et al., in \cite{Bhavanari26042010}, defined and studied the graph of a nearring with respect to an ideal, and investigated the structural properties of these graphs for different types of prime ideals such as 3-prime, c-prime, and equiprime ideals. A different approach uses graphs defined taking the substructures of an algebraic structure -- such as ideals of a ring or a nearring -- as vertices. Salvankar et al., \cite{rajani2023superfluous}, studied such graphs associated with generalized superfluous ideals of nearrings.  
	
	This paper develops the theory of complement and supplement ideals in nearrings, and is organized as follows. In Section~\ref{sec:ComplementsAndSupplements} we obtain preliminary results on complement and supplement ideals of nearrings. In Section~\ref{sec:MatrixNearrings} of this article, we observe Galois connections between $\mathfrak L(N)$ and $\mathfrak L(M_n(N))$, and also establish one-to-one correspondences between the complement and supplement ideals of a nearring and of its matrix nearrings. In Section~\ref{sec:Graphs} we define graphs associated with complement and supplement ideals and obtain some results related to these. Also, we study combinatorial properties of graphs such as clique number and girth.
	
	In this paper we consider right nearrings, i.e., nearrings in which right distributivity holds. A normal subgroup $I$ of $(N, +)$ is an \emph{ideal} if $IN \subseteq I$ and $n(n' + i) - nn' \in I$ for $n, n' \in N$ and $i \in I$. We refer to \cite{pilz, satyanarayana2013near} for the basic theory of nearrings. 
	
	\section{Complement and Supplement Ideals of Nearrings}\label{sec:ComplementsAndSupplements}
	
	In this section, we prove some general results on complement ideals and supplement ideals of nearrings, and in particular, discuss ideals in Malone trivial nearrings.
	
	\begin{definition}[\cite{satyanarayana2013near}]
		An ideal $I$ of $N$ is a \emph{complement} of an ideal $K$ of $N$ if it is maximal with respect to the property $K \cap I = \{0\}$.
	\end{definition}
	
	\begin{definition}
		An ideal $I$ of $N$ is a \emph{supplement} of an ideal $K$ of $N$ if it is minimal with respect to the property $K + I = N$.
	\end{definition}
	
	\begin{example}
		Let $N = (\mathbb{Z}_4\times \mathbb{Z}_2, +, \star)$. The addition is component-wise and multiplication is defined as given in the table below.
		
		\begin{center}
			\begin{tabular}{|c|c|c|c|c|c|c|c|c|}
				\hline 
				$\star$ & $0$ & $1$ & $2$ & $3$ & $4$ & $5$ & $6$ & $7$ \\
				\hline
				$0$ &  $0$ &  $0$ &  $0$ &  $0$ &  $0$ &  $0$ &  $0$ &  $0$ \\
				\hline
				$1$ &  $0$ &  $0$ &  $0$ &  $2$ &  $0$ &  $2$ &  $0$ &  $2$ \\
				\hline
				$2$ &  $0$ &  $0$ &  $0$ &  $0$ &  $0$ &  $0$ &  $0$ &  $0$ \\
				\hline
				$3$ &  $0$ &  $0$ &  $0$ &  $2$ &  $0$ &  $2$ &  $0$ &  $2$ \\
				\hline
				$4$ &  $0$ &  $0$ &  $0$ &  $0$ &  $2$ &  $2$ &  $2$ &  $2$ \\
				\hline
				$5$ &  $0$ &  $0$ &  $0$ &  $2$ &  $2$ &  $0$ &  $2$ &  $0$ \\
				\hline
				$6$ &   $0$ &  $0$ &  $0$ &  $0$ &  $2$ &  $2$ &  $2$ &  $2$ \\
				\hline
				$7$ &  $0$ &  $0$ &  $0$ &  $2$ &  $2$ &  $0$ &  $2$ &  $0$ \\
				\hline
			\end{tabular} 
		\end{center}
		
		The ideals of $N$ are $I_1 = \{0\}$, $I_2 = \{0, 2\}$, $I_3 = \{0, 2, 5, 7\}$, $I_4 =\{0, 1, 2, 3\}$, $I_5 = \{0, 2, 4, 6\}$, and $ N $. Observe that $I_3$ is a supplement of $I_4$.
	\end{example}
	
	Observe that the definitions of complement and supplement are dual to each other. As a result, most of the properties proved below also have dual versions, and we shall assume henceforth that this is understood. Wherever the dual result does not hold, we explicitly state so.
	
	A nearring $N$ is \emph{indecomposable} if it cannot be expressed as a direct sum of two proper ideals.
	
	\begin{lemma}\label{lem:indecomposable}
		If $N$ is indecomposable, then no maximal ideal of $N$ is a complement and no minimal ideal of $N$ is a supplement.
	\end{lemma}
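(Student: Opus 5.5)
Suppose $M$ is a maximal ideal of $N$ and $M$ is a complement of some ideal $K$. I will derive a contradiction from indecomposability; the dual statement for minimal ideals and supplements is handled by dualizing the argument.

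Since $M$ is a complement of $K$, we have $K \cap M = \{0\}$, so $K \neq N$ (otherwise $M = \{0\}$, and then a maximal ideal equal to $\{0\}$ makes $N$ simple, a degenerate case to treat separately). If $K = \{0\}$, then the complement of $K$ is $N$ itself, contradicting that $M$ is proper. So $K$ is a nonzero proper ideal. Next consider $M + K$, which is an ideal since sums of ideals in nearrings are ideals. It contains $M$, and $M + K \neq M$ because $K \not\subseteq M$, as $K \cap M = \{0\}$ and $K \neq 0$. By maximality of $M$, $M + K = N$. Combined with $M \cap K = \{0\}$, this gives $N = M \oplus K$, a direct sum of two proper ideals ($M$ is proper by maximality, and $K$ is proper as shown). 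This contradicts indecomposability. The key point to check is that in a nearring, two ideals with trivial intersection commute elementwise and their sum is an internal direct sum of ideals. This follows from the standard fact that $I \cap J = 0$ implies $i + j - i - j \in I \cap J$ by normality, so the decomposition is a genuine direct sum in the nearring sense.

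For the dual, let $I$ be a minimal ideal that is a supplement of some $K$, so $K + I = N$ with $I$ minimal with respect to this property. Then $K \neq N$, since otherwise $\{0\}$ would be the supplement and $I$ would be zero, not minimal. Now $K \cap I \subseteq I$ is an ideal, so by minimality of $I$ it is either $\{0\}$ or $I$. If $K \cap I = I$, then $I \subseteq K$ and $N = K + I = K$, a contradiction. Hence $K \cap I = \{0\}$ and $N = K \oplus I$. Here $I$ is proper, since $I = N$ would require $K = 0$; but then $N$ is the only ideal with $K + I = N$, so $N$ is a supplement of $0$ and also a minimal ideal, meaning $N$ is simple. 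This is again the degenerate case. Thus we get a nontrivial decomposition, contradicting indecomposability.

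The main obstacle is the degenerate simple case, where $\{0\}$ is maximal or $N$ is minimal. In that case the statement would be false as literally read: $N$ is a complement of $\{0\}$ and is minimal. I would handle it by adopting the convention that maximal and minimal ideals are proper and nonzero respectively, and by noting that in a simple nearring $\{0\}$ is maximal and is the complement of $N$. So I would need to verify the intended convention, likely excluding simple $N$ or trivial ideals, and state it explicitly.
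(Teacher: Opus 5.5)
Your argument is the paper's: $K\neq\{0\}$ and $M\cap K=\{0\}$ give $M+K=N$ by maximality, hence $N=M\oplus K$, and you write out the dual case that the paper dismisses with ``similarly''. Your check that $K\neq N$ is a genuine improvement, because the paper's one-line proof skips the simple case $M=\{0\}$, $K=N$, and you are right that the literal statement fails there: $\{0\}$ is a maximal ideal complementing $N$, and $N$ is a minimal ideal that is a \emph{supplement} of $\{0\}$ (your last paragraph says ``complement'' at that point, though you had it right earlier), so simple $N$ must be excluded.
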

	
	\begin{proof}
		Suppose that $M$ is a maximal ideal of $N$ which is a complement. Then there exists an ideal $I \neq \{0\}$ such that $M \cap I = \{0\}$. Hence $I + M = N$ which is a contradiction. Therefore any maximal ideal of $N$ cannot be a complement. Similarly, we can prove that no minimal ideal of $N$ can be a supplement. 
	\end{proof}
	
	\noindent The converse of Lemma~\ref{lem:indecomposable} is not true, as shown in the following example.
	
	\begin{example}
		Let $N$ be the zero ring defined on the group $\mathbb{Z}_4\times \mathbb{Z}_4$. The ideal lattice of $N$ is as shown in Figure~\ref{fig:L(Z4xZ4)}.
		
		\begin{figure}[h]
			\centering
			\includegraphics[width=0.35\textwidth]{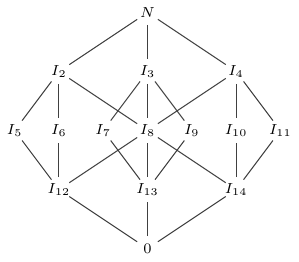}
			\caption{\label{fig:L(Z4xZ4)} Ideal lattice of $N$}
		\end{figure}
		
		We can observe that the maximal ideals $I_2$, $I_3$, and $I_4$ of $N$ are not complement ideals and the minimal ideals $I_{12}$, $I_{13}$, and $I_{14}$ are not supplements. However, the ideals $I_5$ and $I_9$ are direct summands of $N$.
	\end{example}
	
	An ideal of $N$ is \emph{essential} if it has a non-trivial intersection with every non-trivial ideal of $N$, and it is \emph{superfluous} if its sum with every proper ideal of $N$ is proper.
	
	\begin{lemma}\label{lem:IcapJissuperfluous}
		Let $J$ be a supplement ideal of an ideal $I$ of $N$. Then $I \cap J$ is a superfluous ideal in $N$. 
	\end{lemma}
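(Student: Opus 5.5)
The natural approach is a direct argument that combines the minimality of $J$ with the modular law in the ideal lattice $\mathfrak L(N)$. Ideals of a nearring are normal subgroups of $(N,+)$, and the sum of two ideals is again an ideal. The normal subgroups of a group form a modular lattice, and sums and intersections of ideals agree with the lattice operations there. So $\mathfrak L(N)$ is a sublattice of a modular lattice and is itself modular. In particular, whenever $A \subseteq C$ are ideals and $B$ is any ideal, we have $A + (B \cap C) = (A + B) \cap C$.

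Let $K$ be an ideal of $N$ with $(I \cap J) + K = N$. We must show $K = N$. The plan has three steps.

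\begin{enumerate}
\item Show that $J \cap K$ also supplements $I$. Since $I \cap J \subseteq I$, we get
\[
I + (J \cap K) \supseteq (I\cap J) + (J \cap K).
\]
Since $I \cap J \subseteq J$, modularity gives
\[
(I\cap J) + (J \cap K) = J \cap \bigl((I \cap J) + K\bigr) = J \cap N = J.
\]
Hence $I + (J\cap K) \supseteq I + J = N$, so $I + (J\cap K) = N$.

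\item Apply minimality. The ideal $J\cap K$ is contained in $J$ and satisfies $I + (J\cap K) = N$. By the minimality of $J$ with respect to $I + J = N$, we get $J \cap K = J$, that is, $J \subseteq K$.

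\item Conclude. Since $I\cap J \subseteq J \subseteq K$, we have $N = (I \cap J) + K = K$. Thus every ideal $K$ with $(I\cap J)+K=N$ equals $N$. Equivalently, the sum of $I\cap J$ with any proper ideal is proper, so $I \cap J$ is superfluous.
\end{enumerate}

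The only point needing care is the modular law for ideals of a possibly non-Abelian nearring. I would verify it directly. The inclusion $\subseteq$ is clear. For $\supseteq$, take $c = a + b \in C$ with $a\in A$ and $b \in B$. Then $b = -a + c \in C$, because $A \subseteq C$. So $b \in B\cap C$, and hence $c \in A + (B\cap C)$.

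Normality of the subgroups is used only to know that $A+B$ is a subgroup, and that ideal, so the sums above are taken in $\mathfrak L(N)$. Everything else is formal, so I expect this verification to be the main (mild) obstacle.
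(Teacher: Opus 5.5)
Your proof is correct and follows the paper's argument essentially step for step. Modularity gives $(I\cap J)+(J\cap K)=J$, hence $I+(J\cap K)=N$; minimality of $J$ then forces $J\subseteq K$, and so $K=N$. Your direct verification of the modular law for normal subgroups is a harmless addition that the paper simply takes for granted.
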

	
	\begin{proof}
		Suppose that there exists an ideal $K$ such that $(I \cap J) + K = N$. Then $((I\cap J) + K) \cap J = J$. By modularity, $(I \cap J) + (J \cap K) = J$. Now, $N = I + J = I + (I \cap J) + (J \cap K) = I + (J \cap K)$. Since $J$ is a supplement of $I$, $J \cap K = J$. This implies that $J\subseteq K$. Therefore, $K = (I \cap J) + K = N$. Thus $I \cap J$ is a superfluous ideal in $N$. 
	\end{proof}
	
	\begin{remark}\label{rem:SuperfluousSupplement}
		The only superfluous supplement ideal of $N$ is $\{0\}$.
	\end{remark}
	
	\begin{lemma}\label{lem:0andN}
		If one of the following conditions holds, then $\{0\}$ and $N$ are the only supplement ideals of $N$:
		\begin{enumerate}[label = (\roman*)]
			\item $N$ contains a greatest proper ideal.
			\item $N$ has ACCI and contains a unique maximal ideal.
		\end{enumerate}
	\end{lemma}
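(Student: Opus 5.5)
The plan is to show that, under either hypothesis, every proper ideal $K$ of $N$ has only $N$ as a supplement, while $\{0\}$ is the supplement of $N$ itself. This would mean the supplement ideals are exactly $\{0\}$ and $N$. Note first that $\{0\}$ is a supplement of $N$, since $N+\{0\}=N$ and nothing is smaller. Also, $N$ is a supplement of $\{0\}$, since $\{0\}+J=N$ forces $J=N$. So it remains to show that if $J$ is a supplement of some ideal $K$, then $J\in\{\{0\},N\}$.

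For (i), let $M$ be the greatest proper ideal. Suppose $J$ is a supplement of $K$ with $K+J=N$.
\begin{itemize}
\item If both $K$ and $J$ were proper, both would lie in $M$. Then $K+J\subseteq M\neq N$, a contradiction.
\item Hence $K=N$ or $J=N$.
\item If $K=N$, minimality of $J$ forces $J=\{0\}$.
\item Otherwise $J=N$.
\end{itemize}
This settles (i). An alternative route is to note that $M$ is superfluous, since any ideal not contained in $M$ equals $N$. Every proper ideal is then superfluous, and Remark~\ref{rem:SuperfluousSupplement} shows that a proper supplement must be $\{0\}$.

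For (ii), reduce to (i) by showing that the unique maximal ideal $M$ contains every proper ideal. The key step is that every proper ideal $I$ lies in some maximal ideal. Start from the proper ideal $I$ and look for a chain of proper ideals above it. The ascending chain condition on ideals (ACCI) guarantees that the collection of proper ideals containing $I$ has a maximal element. Otherwise we could build a strictly ascending infinite chain by repeatedly choosing a strictly larger proper ideal, which ACCI forbids. That maximal element is a maximal ideal of $N$, so it must equal $M$, and hence $I\subseteq M$. Thus $M$ is the greatest proper ideal, and (i) applies.

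The main obstacle is this existence of maximal ideals above a given proper ideal. Without $1$ (or some finiteness), Zorn's lemma does not directly apply, because a union of a chain of proper ideals need not be proper. That is exactly why ACCI is assumed, and the argument above sidesteps the issue by a dependent-choice contradiction. I would also double-check that "maximal ideal" in the paper means maximal among proper ideals, so that the element produced is indeed maximal.
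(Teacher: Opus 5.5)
Your proposal is correct and follows the paper's proof. For (i), the sum of two proper ideals stays inside the greatest proper ideal, so no proper non-trivial ideal can be a supplement; (ii) is reduced to (i) by showing that the unique maximal ideal is the greatest proper ideal, and you justify this explicitly (ACCI yields a maximal ideal above every proper ideal) where the paper merely asserts it.
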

	
	\begin{proof}
		Suppose that $N$ contains a greatest proper ideal $M$. Then, since every proper ideal of $N$ is contained in $M$, the sum of any two proper ideals is proper, and therefore no proper non-trivial ideal is a supplement.
		
		If $N$ has ACCI and contains a unique maximal ideal $M$, then $M$ is the greatest proper ideal of $N$.
	\end{proof}
	
	\begin{lemma}\label{lem:AtLeast2Supplements}
		If $N$ has DCCI, then $N$ cannot have a unique proper non-trivial supplement. 
	\end{lemma}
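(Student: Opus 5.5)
The plan is to start from one proper non-trivial supplement and use DCCI to build a second one, inside the ideal it supplements. Suppose $J$ is a proper non-trivial supplement ideal of $N$, say $J$ is a supplement of an ideal $I$. Then $I + J = N$ and $J$ is minimal with this property. First I will record that $I$ must be proper. If $I = N$, then $\{0\}$ already satisfies $I + \{0\} = N$. Minimality of $J$ would then force $J = \{0\}$, contradicting non-triviality.

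Next I will produce a supplement of $J$ contained in $I$. Consider the family $\mathcal{F} = \{K \text{ an ideal of } N : K \subseteq I,\ J + K = N\}$. It is non-empty, since $I \in \mathcal{F}$. By DCCI every non-empty family of ideals has a minimal element, so I pick a minimal $K \in \mathcal{F}$. To see that $K$ is minimal among \emph{all} ideals $K'$ with $J + K' = N$, take such a $K' \subseteq K$. Then $K' \subseteq I$, so $K' \in \mathcal{F}$, and minimality in $\mathcal{F}$ gives $K' = K$. Hence $K$ is a supplement of $J$.

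The remaining step is to check that $K$ is proper, non-trivial, and different from $J$.
\begin{enumerate}[label = (\roman*)]
\item $K$ is non-trivial: if $K = \{0\}$, then $J = J + K = N$, contradicting that $J$ is proper.
\item $K$ is proper: $K \subseteq I \neq N$.
\item $K \neq J$: if $K = J$, then $J \subseteq I$, so $N = I + J = I$, contradicting that $I$ is proper.
\end{enumerate}
So $K$ is a second proper non-trivial supplement, distinct from $J$, and $J$ cannot be the unique one.

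The only delicate point is the use of DCCI. It supplies a minimal member of the family $\mathcal{F}$, and restricting the family to ideals contained in $I$ is what guarantees that $K \neq J$. Without DCCI, a minimal element of $\mathcal{F}$ might not exist, which is why the hypothesis is needed.
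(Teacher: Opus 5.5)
Your proof is correct and follows essentially the same route as the paper: starting from a supplement $J$ of $I$, you use DCCI to pick an ideal $K \subseteq I$ minimal with $J + K = N$, which gives a second proper non-trivial supplement. Your version also makes explicit the checks the paper leaves implicit: that $I$ is proper, and that $K$ is non-trivial, proper, and distinct from $J$.
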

	
	\begin{proof}
		Suppose that $I$ is a proper non-trivial supplement in $N$. Then there exists a proper non-trivial ideal $J$ of $N$ such that $I$ is minimal with respect to $I + J = N$. Since $N$ has DCCI, there exists a proper ideal $K \subseteq J$ which is minimal with respect to the property $I + K = N$. Hence there exists at least two proper non-trivial supplements in $N$.  
	\end{proof}
	
	\begin{lemma}\label{lem:UniqueMaximalIdeal}
		Let $N$ have ACCI and DCCI. Then $N$ has a unique maximal ideal if and only if $\{0\}$ and $N$ are the only supplement ideals of $N$.
	\end{lemma}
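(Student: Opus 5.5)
The forward direction is already available: if $N$ has a unique maximal ideal, then since $N$ has ACCI, Lemma~\ref{lem:0andN}(ii) shows that $\{0\}$ and $N$ are the only supplement ideals of $N$. So the plan concentrates on the converse, which we prove by contraposition.

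Assume $N$ has ACCI and DCCI, and suppose $N$ has two distinct maximal ideals $M_1$ and $M_2$. (ACCI guarantees that maximal ideals exist whenever $N \neq \{0\}$, and every proper ideal lies in one of them.) Then $M_1 \subsetneq M_1 + M_2$, so maximality of $M_1$ forces $M_1 + M_2 = N$. Now consider the family $\mathcal{F} = \{ K \in \mathfrak L(N) : K \subseteq M_2,\ M_1 + K = N\}$. It is nonempty, since it contains $M_2$. By DCCI, $\mathcal{F}$ has a minimal element $K_0$; here we use the standard equivalence of DCCI with the minimal condition on nonempty families of ideals. If $K \subseteq K_0$ satisfies $M_1 + K = N$, then $K \in \mathcal{F}$, so $K = K_0$. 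Hence $K_0$ is minimal among all ideals $K$ with $M_1 + K = N$, that is, $K_0$ is a supplement of $M_1$.

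It remains to check that $K_0$ is proper and non-trivial. It is proper because $K_0 \subseteq M_2 \neq N$. It is non-trivial because $K_0 = \{0\}$ would give $M_1 = M_1 + K_0 = N$, contradicting the properness of $M_1$. Thus $K_0$ is a supplement ideal different from $\{0\}$ and $N$, which proves the contrapositive. (Lemma~\ref{lem:AtLeast2Supplements} then even yields a second such supplement, though we do not need it.)

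The only delicate point is ensuring that a minimal element of $\mathcal{F}$ is genuinely a supplement in the sense of the definition. This is handled by restricting $\mathcal{F}$ to subideals of $M_2$, because any smaller ideal satisfying the sum condition automatically lies in $\mathcal{F}$. One should also note that the degenerate case $N = \{0\}$ is trivial, and that existence of maximal ideals relies on ACCI via Zorn's lemma or the maximal condition.
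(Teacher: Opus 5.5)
Your proof is correct and follows essentially the same route as the paper. The forward direction comes from Lemma~\ref{lem:0andN}(ii); for the converse, you take two maximal ideals $M_1, M_2$ with $M_1+M_2=N$ and use DCCI to get $K_0\subseteq M_2$ minimal with $M_1+K_0=N$, and you are more careful than the paper in checking that $K_0$ is minimal among all such ideals, proper, and non-trivial.

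One small correction: $N=\{0\}$ is not a trivially fine case but a genuine exception, since it has no maximal ideal while $\{0\}=N$ is its only supplement. The statement therefore tacitly requires $N\neq\{0\}$, as the paper also implicitly assumes.
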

	
	\begin{proof}
		Suppose that $\{0\}$ and $N$ are the only supplements in $N$. Since $N$ has ACCI, every chain of ideals has a maximal element. Suppose that $M_1$ and $M_2$ are two maximal ideals of $N$. Then $M_1 + M_2 = N$. Consider a chain of ideals $\{I_\alpha\}$, $I_\alpha \subseteq M_2$, such that $I_\alpha + M_1 = N$. Since $N$ has DCCI, there exists a minimal ideal $\mathfrak{m}$ such that $\mathfrak{m} \subseteq M_2$ and $\mathfrak{m} + M_1 = N$, which is a contradiction. Therefore, $N$ has a unique maximal ideal. The converse follows from Lemma~\ref{lem:0andN}.
	\end{proof}
	
	\begin{lemma}\label{lem:DirectImpliesComplementAndSupplement}
		If $I$ is a direct summand of $N$, then $I$ is both a complement and supplement ideal.
	\end{lemma}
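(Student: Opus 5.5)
We suppose $N = I \oplus J$ for some ideal $J$, meaning $I + J = N$ and $I \cap J = \{0\}$. The plan is to show that $I$ is a complement of $J$ and that $I$ is a supplement of $J$. Both arguments rest on the modular law for the ideal lattice $\mathfrak L(N)$, which was already used in the proof of Lemma~\ref{lem:IcapJissuperfluous}: if $A \subseteq C$, then $(A + B) \cap C = A + (B \cap C)$. This holds because ideals are normal subgroups, and normal subgroups form a modular lattice.

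\emph{Complement.} Certainly $I \cap J = \{0\}$. To see maximality, let $K$ be an ideal with $I \subseteq K$ and $K \cap J = \{0\}$. Since $K = K \cap N = K \cap (I + J)$ and $I \subseteq K$, modularity gives
\[ K = I + (K \cap J) = I + \{0\} = I. \]
So no ideal strictly containing $I$ meets $J$ trivially. Hence $I$ is maximal with respect to $J \cap I = \{0\}$, i.e.\ $I$ is a complement of $J$.

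\emph{Supplement.} Certainly $I + J = N$. To see minimality, let $K$ be an ideal with $K \subseteq I$ and $K + J = N$. Intersecting with $I$ and applying modularity (valid since $K \subseteq I$) gives
\[ I = (K + J) \cap I = K + (J \cap I) = K + \{0\} = K. \]
Thus $I$ is minimal with respect to $J + I = N$, so $I$ is a supplement of $J$.

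The only point requiring care is the use of modularity for ideals of a nearring whose additive group may be non-Abelian. Here the sum $A + B$ of ideals is again an ideal and coincides with the subgroup generated by $A \cup B$. Hence $\mathfrak L(N)$ is a sublattice of the lattice of normal subgroups of $(N,+)$, which is modular. With this in hand, both halves are immediate.
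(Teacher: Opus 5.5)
Your proof is correct and follows the same approach as the paper. The paper proves the supplement half exactly as you do, via the modular law applied to $I \cap (K + J)$, and dismisses the complement half with ``similarly''; your explicit computation $K = K \cap (I+J) = I + (K \cap J) = I$ supplies that half.
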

	
	\begin{proof}
		Since $I$ is a direct summand, there exists $J$ in $N$ such that $I + J = N$ and $I \cap J = \{0\}$. Suppose that there exists an ideal $K$ such that $K \subseteq I$ and $K + J =N$. Now, $I = I\cap N = I \cap (K + J)$. By modularity, $I = K + (I\cap J) = K$. Therefore $I$ is a supplement. Similarly, we can prove $I$ is a complement.  
	\end{proof}
	
	The converse of Lemma~\ref{lem:DirectImpliesComplementAndSupplement} is not true. For example, consider the zero nearring $N$ on the group $\mathbb{Z}_2^2 \times \mathbb{Z}_8$. This nearring has ideals of order $4$ that have $\mathbb{Z}_4$ as their underlying group, and these ideals cannot be direct summands of $N$. But we can observe that these ideals are both complement and supplement in $N$.
	
	Note that if $N = N_1 \oplus N_2$ is a nearring with a right identity, then the ideals of $N$ are exactly the direct sums $I_1 \oplus I_2$, where $I_1$ is an ideal in $N_1$ and $I_2$ is an ideal in $N_2$.
	
	\begin{lemma}\label{lem:supplementsindirectsum}
		If $N$ is a nearring with a right identity, and $N = N_1 \oplus N_2$ is a direct decomposition of $N$, then every supplement ideal of $N$ is of the form $I_1 \oplus I_2$ where $I_\alpha$ is a supplement ideal in $N_\alpha$, $\alpha = 1, 2$.
	\end{lemma}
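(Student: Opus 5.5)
Let $S$ be a supplement ideal of $N$, say $S$ is minimal with respect to $K + S = N$ for some ideal $K$ of $N$. By the remark preceding the statement (this is where the right identity is used), we can write $S = I_1 \oplus I_2$ and $K = K_1 \oplus K_2$ with $I_\alpha, K_\alpha$ ideals of $N_\alpha$. It then remains to show that $I_\alpha$ is minimal with respect to $K_\alpha + I_\alpha = N_\alpha$ for $\alpha = 1,2$. This would make each $I_\alpha$ a supplement of $K_\alpha$ in $N_\alpha$.

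The first step is to transfer the sum condition to the components. Since $N = N_1 \oplus N_2$ is a direct sum of ideals, addition is componentwise. Hence
\[
K + S = (K_1 + I_1) \oplus (K_2 + I_2).
\]
This equals $N_1 \oplus N_2$ if and only if $K_\alpha + I_\alpha = N_\alpha$ for each $\alpha$. To justify the componentwise comparison, I would use the projections $N \to N_\alpha$, or equivalently intersect with $N_\alpha$ and use modularity, since $N_1 \cap N_2 = \{0\}$ and $K_\alpha + I_\alpha \subseteq N_\alpha$.

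The second step is minimality. Suppose $J_1 \subseteq I_1$ is an ideal of $N_1$ with $K_1 + J_1 = N_1$. I would first check that $J_1 \oplus I_2$ is an ideal of $N$; this again follows from the description of ideals of $N$ as direct sums. It lies inside $S$, and by the first step it satisfies $K + (J_1 \oplus I_2) = N$. Minimality of $S$ then gives $J_1 \oplus I_2 = I_1 \oplus I_2$, so $J_1 = I_1$, and the case $\alpha = 2$ is symmetric.

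The main obstacle, mild as it is, lies in the decomposition facts: that every ideal of $N$ (in particular both $S$ and $K$) splits componentwise, and that $J_1 \oplus I_2$ is an ideal of $N$. Both rest on the stated characterization of ideals of $N_1 \oplus N_2$ when $N$ has a right identity, so I would invoke that remark explicitly. Once it is available, the rest is componentwise bookkeeping.
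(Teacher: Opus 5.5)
Your proof is correct relative to the paper and is essentially the second half of the paper's proof. You split the supplement and its partner via the preceding remark, compare $K+S=N$ componentwise, and test minimality of $I_1$ against $J_1\oplus I_2$. Fixing $I_2$ in the second slot is more explicit than the paper, which perturbs both components at once. The paper also proves the reverse direction, that $I_1\oplus I_2$ is a supplement whenever each $I_\alpha$ is, which the statement as written does not require.

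One caution concerns the step you singled out as the crux, which both you and the paper take as given. The splitting of ideals of $N_1\oplus N_2$ does not follow from a right identity $ne=n$ alone. Take $\mathbb{Z}_2\times\mathbb{Z}_2$ with $ab=a$. Every element is a right identity and every subgroup is an ideal. The diagonal $\{(0,0),(1,1)\}$ is then a supplement of $\mathbb{Z}_2\times 0$ that is not of the form $I_1\oplus I_2$, so the lemma itself fails there.

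The splitting does hold under either of two extra hypotheses. Write $x=x_1+x_2\in I$ and $e=e_1+e_2$.
\begin{itemize}
\item If $N$ is zero-symmetric, one gets $xe_1=x_1$, which lies in $I$ since $IN\subseteq I$.
\item If $N$ has a left identity $en=n$, then $e_1(-x_2+x)-e_1(-x_2)=x_1$, which lies in $I$ by the ideal condition.
\end{itemize}
Under either hypothesis your argument goes through verbatim.
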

	\begin{proof}
		First, suppose that $I_\alpha$ is a supplement of $J_\alpha$ in $N_\alpha$, $\alpha = 1, 2$. Let $I = I_1 \oplus I_2$ and $J = J_1 \oplus J_2$. Then $I + J = (I_1 + J_1) \oplus (I_2 + J_2) = N$. To see that $I$ is minimal with respect to this property, suppose that there exists an ideal $K$ in $N$ such that $K \subseteq I$ and $K + J = N$. Then $K = K_1 \oplus K_2$, where $I_\alpha \supseteq K_\alpha \unlhd N_\alpha$, and $K_\alpha + J_\alpha = N_\alpha$, $\alpha = 1, 2$. Since $I_\alpha$ is a supplement of $J_\alpha$ in $N_\alpha$, it follows that $K_\alpha = I_\alpha$. Thus, $K = I$, as required.
		
		For the converse, suppose that $I = I_1 \oplus I_2$ is a supplement of $J = J_1 \oplus J_2$ in $N$. Then $I + J = N$ implies that $I_\alpha + J_\alpha = N_\alpha$, $\alpha = 1, 2$. To see that $I_\alpha$ is minimal with respect to this property, suppose that $I_\alpha \supseteq K_\alpha \unlhd N_\alpha$, with $K_\alpha + J_\alpha = N_\alpha$, $\alpha = 1, 2$. Then $(K_1 \oplus K_2) + (J_1 \oplus J_2) = N$ and $I_1 \oplus I_2 \supseteq K_1 \oplus K_2$, which implies that $K_1 \oplus K_2 = I_1 \oplus I_2$, i.e., $K_\alpha = I_\alpha$, $\alpha = 1, 2$. 
	\end{proof}
	
	Now we prove some properties of ideals in Malone trivial nearrings that will be used in Section~\ref{sec:Graphs}.
	
	\begin{definition}[\cite{veldsman1992equiprime}]\label{def:Malone}
		Let $G$ be a non-trivial group and $S \subseteq G \setminus \{0\}$. Let $N$ be the nearring defined on $G$ with multiplication given by
		
		\begin{equation*}
			ab = 
			\begin{cases}
				a, & \text{if}~ b \in S \\
				0, & \text{if}~ b \notin S.
			\end{cases}
		\end{equation*}	
		The nearring $N$ is a \emph{Malone trivial nearring} on $G$.
	\end{definition}
	
	Throughout the remainder of this section, let $N$ denote a Malone trivial nearring and $S \subseteq N$ be as given in Definition~\ref{def:Malone}.
	
	The following result characterizes an ideal of a Malone trivial nearring as a normal subgroup of the additive subgroup such that $S$ can be expressed as a union of one or more of its cosets other than itself.
	
	\begin{lemma}[{\cite[Proposition 1.6.13]{ferrero2002nearrings}}]\label{lem:MaloneIdeal}
		A proper normal subgroup $I$ of $(N, +)$ is an ideal of $N$ if and only if $S = \bigcup\limits_{s \in S} (s + I)$ and $S\cap I = \varnothing$.
	\end{lemma}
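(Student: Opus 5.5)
We verify the two ideal axioms directly from the multiplication rule of Definition~\ref{def:Malone}, treating the two directions separately. Throughout, $I$ is a proper normal subgroup of $(N,+)$. The key observation is that for $n,n'\in N$ and $i\in I$, both products $n(n'+i)$ and $nn'$ are either $n$ or $0$. Which value occurs depends only on whether $n'+i$ and $n'$ lie in $S$. Also, for every $a\in I$ and $b\in N$, the product $ab$ is either $a\in I$ or $0\in I$.

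\emph{Sufficiency.} Assume $S=\bigcup_{s\in S}(s+I)$, so that $S$ is closed under adding elements of $I$ on the right, and $S\cap I=\varnothing$. Then $IN\subseteq I$ holds automatically by the observation above. For the second axiom, $n'\in S$ if and only if $n'+i\in S$: the forward direction is the coset condition, and the backward direction follows by applying it to $n'+i$ and $-i$. Hence $n(n'+i)=nn'$, so $n(n'+i)-nn'=0\in I$. We only need the right-coset form, since normality of $I$ makes left and right cosets coincide. (The condition $S\cap I=\varnothing$ is not even used here.)

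\emph{Necessity.} Assume $I$ is an ideal. Since $I$ is proper, pick $n\notin I$. For $n'\in N$ and $i\in I$, the element $n(n'+i)-nn'$ lies in $I$ and belongs to $\{0,n,-n\}$. Both $n$ and $-n$ lie outside $I$, so this element must be $0$. Therefore $n'\in S$ if and only if $n'+i\in S$, which gives $S+I\subseteq S$ and hence $S=\bigcup_{s\in S}(s+I)$. Next, suppose $S\cap I\neq\varnothing$. Then $0\in S+I\subseteq S$, since $s+(-s)=0$ with $-s\in I$ for $s\in S\cap I$. This contradicts $S\subseteq G\setminus\{0\}$, so $S\cap I=\varnothing$.

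The only delicate point is the necessity argument: properness of $I$ is what supplies an element $n\notin I$, and this forces the difference $n(n'+i)-nn'$ to vanish rather than merely to lie in $I$. Once that is settled, the rest is routine case checking.
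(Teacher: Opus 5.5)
Your proof is correct. The paper gives no argument of its own for this lemma; it simply quotes Proposition 1.6.13 of Ferrero and Ferrero Cotti. So your direct check of the two ideal axioms is a self-contained proof, and it is the natural one. Every product $ab$ lies in $\{a,0\}$, so $IN\subseteq I$ is automatic, and all the content is in the condition $n(n'+i)-nn'\in I$.

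Your argument also brings out two points the statement leaves implicit:
\begin{itemize}
\item The condition $S\cap I=\varnothing$ is redundant. It follows from $S+I=S$ together with $0\notin S$, which is exactly your final step.
\item Properness is used only in the necessity direction, and it cannot be dropped there. $N$ is always an ideal, yet $S+N=S$ holds only when $S=\varnothing$, since $0\notin S$.
\end{itemize}

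Two cosmetic remarks:
\begin{itemize}
\item In the necessity step, passing from $n(n'+i)=nn'$ to ``$n'\in S$ iff $n'+i\in S$'' uses $n\neq 0$. This holds because $0\in I$ and $n\notin I$, but it is worth stating.
\item Your appeal to normality is unnecessary. The coset that actually arises is $n'+I$, which is already of the form $s+I$.
\end{itemize}
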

	
	\begin{lemma}\label{lem:N!=I+J}
		$(N, +, \cdot)$ is indecomposable and it does not have any proper non-trivial supplement.
	\end{lemma}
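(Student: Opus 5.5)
The plan is to show the stronger statement that the sum of two proper ideals of $N$ is never $N$. Both assertions of Lemma~\ref{lem:N!=I+J} then follow quickly. Throughout I will assume $S \neq \varnothing$. If $S = \varnothing$, then $N$ is the zero nearring on $G$ and Lemma~\ref{lem:MaloneIdeal} gives no information: for instance, on $G = \mathbb{Z}_2 \times \mathbb{Z}_2$ the zero nearring decomposes. So the statement has to be read with $S$ non-empty, and I will say so explicitly.

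\textbf{Key step.} Let $I$ and $J$ be proper ideals of $N$ and suppose, for contradiction, that $I + J = N$.
\begin{enumerate}[label = (\arabic*)]
\item Fix $s \in S$ and write $s = i + j$ with $i \in I$ and $j \in J$.
\item By Lemma~\ref{lem:MaloneIdeal} applied to $I$, the set $S$ is a union of cosets of $I$, so $s + I \subseteq S$.
\item Since $I$ is normal in $(N,+)$, we have $I + s = s + I \subseteq S$.
\item Hence $j = -i + s \in I + s \subseteq S$.
\item But $j \in J$, and Lemma~\ref{lem:MaloneIdeal} applied to $J$ gives $S \cap J = \varnothing$. This is a contradiction.
\end{enumerate}
The only point needing care is that the additive group may be non-abelian. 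This is why we solve for $j$ on the correct side and use normality of $I$ to switch between left and right cosets.

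\textbf{Consequences.}
\begin{itemize}
\item \emph{Indecomposability.} If $N = I \oplus J$ with $I, J$ proper ideals, then $I + J = N$, which the key step forbids. So $N$ is indecomposable.
\item \emph{No proper non-trivial supplements.} Suppose $I$ is a proper non-trivial ideal that is a supplement of some ideal $K$, so $I + K = N$. Since $I$ is proper, the key step forces $K = N$. But then $\{0\} + K = N$ already holds. Minimality of $I$ with respect to $I + K = N$ then gives $I = \{0\}$, contradicting non-triviality.
\end{itemize}

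I expect no serious obstacle. The main things to get right are the coset-side issue in the key step and stating the hypothesis $S \neq \varnothing$.
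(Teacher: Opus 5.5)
Your proof is correct and follows essentially the paper's argument: the paper also writes an element of $S$ as $i + j$ and uses that $S$ is a union of cosets of one of the ideals to contradict Lemma~\ref{lem:MaloneIdeal}. It uses the cosets of $J$, getting $i + J = s + J \subseteq S$ and hence $i \in S \cap I$, whereas you use the cosets of $I$ to put $j$ into $S \cap J$. Your explicit hypothesis $S \neq \varnothing$ is a genuine point that the paper uses tacitly, since for $S = \varnothing$ every normal subgroup is an ideal and the claim fails. Your derivation of the supplement statement (forcing $K = N$ and then using minimality against $\{0\}$) fills in a step the paper leaves unstated.
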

	
	\begin{proof} 
		Let $I$ and $J$ be proper ideals of $N$, and suppose that $I + J = N$. By Lemma~\ref{lem:MaloneIdeal}, a normal subgroup $I$ of $(N, +)$ is an ideal of $N$ if and only if $S = \bigcup\limits_{s \in S} (s + I)$. Since $I + J = N$, for all $n \in N$, there exists $i \in I$ such that $n + J = i + J$. This implies that $i \in S$, which is a contradiction. Therefore $N \neq I + J$. 
	\end{proof}
	
	\begin{lemma}\label{lem:maximalideal} 
		Every proper ideal of $(N, +, \cdot)$ is contained in a maximal ideal.
	\end{lemma}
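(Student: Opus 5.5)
Let $I$ be a proper ideal of the Malone trivial nearring $N$. The plan is to produce a maximal ideal containing $I$ by Zorn's Lemma applied to the set $\mathcal{P}$ of proper ideals of $N$ containing $I$, ordered by inclusion. This set is nonempty since $I \in \mathcal{P}$. So the work lies in showing that every chain has an upper bound in $\mathcal{P}$.

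Given a chain $\{I_\alpha\}$ in $\mathcal{P}$, take $U = \bigcup_\alpha I_\alpha$. As a directed union of normal subgroups, $U$ is a normal subgroup of $(N,+)$. The ideal conditions are also inherited by a directed union, since each involves only finitely many elements: $UN \subseteq U$, and $n(n'+u) - nn' \in U$ for $u \in U$. Hence $U$ is an ideal containing $I$.

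The main obstacle is showing that $U$ is proper, since a union of proper ideals could a priori be all of $N$. In a general nearring without identity this is exactly where such arguments fail. In the Malone trivial case we can use Lemma~\ref{lem:MaloneIdeal}, which says that each proper ideal $I_\alpha$ satisfies $S \cap I_\alpha = \varnothing$. We split into two cases.

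\textbf{Case $S \neq \varnothing$.} We have $S \cap U = \bigcup_\alpha (S\cap I_\alpha) = \varnothing$. So $U$ misses every element of $S$ and is therefore proper.

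\textbf{Case $S = \varnothing$.} Then $N$ is the zero nearring, and ideals are exactly the normal subgroups of $(N,+)$. Properness of $U$ need not be automatic here; for instance, $\mathbb{Z}(p^\infty)$ has no maximal subgroups. I would first check whether Definition~\ref{def:Malone} implicitly forces $S \neq \varnothing$. The intended reading, given the use of $S$ in Lemma~\ref{lem:N!=I+J}, seems to be that $S$ is nonempty. If so, I would state that assumption explicitly; otherwise I would add a finiteness hypothesis for this degenerate case.

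With $U$ proper, every chain in $\mathcal{P}$ has an upper bound, and Zorn's Lemma gives a maximal element $M$ of $\mathcal{P}$. Any ideal strictly containing $M$ contains $I$, so it lies outside $\mathcal{P}$ only by being equal to $N$. Hence $M$ is a maximal ideal of $N$ containing $I$.
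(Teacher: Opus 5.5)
Your proof is correct and follows the paper's argument: Zorn's lemma on the proper ideals containing $I$, with the union of a chain kept proper because every proper ideal misses $S$ by Lemma~\ref{lem:MaloneIdeal}. Your remark about $S=\varnothing$ is well taken. The paper's proof silently uses $S\neq\varnothing$, and without it the statement genuinely fails: the zero nearring on the Pr\"ufer group $\mathbb{Z}(p^\infty)$ has no maximal ideal. So that hypothesis should indeed be made explicit.
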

	
	\begin{proof} 
		Let $I$ be a proper ideal of $N$. Take $\mathcal P$ to be the set of all proper ideals of $N$ containing $I$. Let $\mathcal L$ be any totally ordered subset of $\mathcal P$. Let $M = \bigcup\limits_{P \in \mathcal L} P$. Then $M$ is an upper bound of $\mathcal L$. Since $M$ contains every $P \in \mathcal L$ and any $P \cap S = \varnothing$, it follows that $M$ is a proper ideal and is in $\mathcal P$. Therefore, by Zorn's lemma, $\mathcal P$ has a maximal element. Hence every proper ideal of $N$ is contained in a maximal ideal. 
	\end{proof}
	
	The following result, which is immediate from Lemmas~\ref{lem:N!=I+J} and \ref{lem:maximalideal}, shows that $N$ has a greatest proper ideal.
	
	\begin{theorem}\label{lem:uniqueness}
		$(N, +, \cdot)$ has a unique maximal ideal $M$, and every proper ideal of $N$ is contained in $M$.
	\end{theorem}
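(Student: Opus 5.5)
The plan is to combine Lemma~\ref{lem:maximalideal} and Lemma~\ref{lem:N!=I+J}, as the sentence before the statement indicates. First I will show that $N$ has at least one maximal ideal. Since $G$ is non-trivial, the zero ideal $\{0\}$ is proper, so Lemma~\ref{lem:maximalideal} gives a maximal ideal $M$ containing it.

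Next I will show uniqueness by contradiction. Suppose $M_1 \neq M_2$ are both maximal ideals. The sum $M_1 + M_2$ is again an ideal of $N$, since a sum of two ideals of a nearring is an ideal. It contains $M_1$ properly: otherwise $M_2 \subseteq M_1$, and maximality of $M_2$ would force $M_2 = M_1$. Maximality of $M_1$ then gives $M_1 + M_2 = N$. But the argument of Lemma~\ref{lem:N!=I+J} shows that no two proper ideals of a Malone trivial nearring can sum to $N$. This is a contradiction, so $M$ is the unique maximal ideal.

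Finally, for an arbitrary proper ideal $I$, Lemma~\ref{lem:maximalideal} places $I$ inside some maximal ideal. By uniqueness that maximal ideal is $M$, so $I \subseteq M$. Hence $M$ is the greatest proper ideal.

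The only step needing care is the appeal to Lemma~\ref{lem:N!=I+J}. Its proof, as written, is terse, so it is worth making the underlying reason explicit.

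Pick $s \in S$, which is possible because $S$ is nonempty in any non-trivial case. If $I + J = N$ with $I, J$ proper, write $s = i + j$ with $i \in I$ and $j \in J$. Since $J$ is a proper ideal, Lemma~\ref{lem:MaloneIdeal} says $S$ is a union of cosets of $J$. Because $i$ lies in the coset $s + J$ (using normality of $J$), we get $i \in S$. But Lemma~\ref{lem:MaloneIdeal} also gives $S \cap I = \varnothing$, a contradiction.

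If $S = \varnothing$, the multiplication is zero, and this argument breaks down. In that case I would note that the statement implicitly assumes $S \neq \varnothing$, as in Lemma~\ref{lem:N!=I+J}.
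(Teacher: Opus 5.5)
Your proof is correct and follows the paper's route: the paper obtains this theorem directly from Lemma~\ref{lem:maximalideal} (every proper ideal lies in a maximal ideal) and Lemma~\ref{lem:N!=I+J} (no two proper ideals sum to $N$), exactly as you do. Your explicit coset argument for Lemma~\ref{lem:N!=I+J} is also right. So is your observation that $S \neq \varnothing$ must be assumed: if $S = \varnothing$ the multiplication is zero, and then for example $\mathbb{Z}_2 \times \mathbb{Z}_2$ has three maximal ideals. That assumption is also used silently in Lemma~\ref{lem:maximalideal}, to see that the union of a chain of proper ideals is proper.
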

	
	\section{Complement and Supplement Ideals of Matrix Nearrings} \label{sec:MatrixNearrings}
	
	In this section, we prove that $((~)_ *,(~)^*)$ and $((~)^+, (~)_*)$ are Galois connections between the ideal lattice of a nearring $N$ and the ideal lattice of the matrix nearring $M_n(N)$, and we give one-to-one correspondences between complement and supplement ideals of $N$ and those of $M_n(N)$.
	
	Throughout this section, let $N$ denote a right nearring with $1$, and $N^n$ the direct sum of $n$ copies of $(N, +)$. The elements of $N^n$ are column vectors, and are written as $(a_1, \cdots , a_n)$. The symbols $\iota_i$ and $\pi_j$, respectively, denote the $i$\textsuperscript{th} coordinate injective and $j$\textsuperscript{th} coordinate projective maps. For an element $a \in N$ , $\iota_i(a) = (0, \ldots, \underbrace{a}_{i\textsuperscript{th}}, \ldots, 0)$ and $\pi_j (a_1, \ldots , a_n) = a_j$, for any $(a_1, \ldots , a_n)\in N^n$. The nearring of $n \times n$ matrices over $N$, denoted by $M_n(N)$, is defined to be the subnearring of $M(N^n)$ generated by the set of maps $\left\{\, f^{a}_{ij} \colon N^n \rightarrow N^n ~\big|~ a \in N,~ 1\leq i,j \leq n \,\right\}$, where $f^a(x) = ax$, for all $a, x \in N$, $f^{a}_{ij}(x_1, \ldots , x_n) = (0, \ldots, \underbrace{ax_j}_{i\textsuperscript{th}}, \ldots, 0)$. Clearly, $f^{a}_{ij} = \iota_i f^a \pi_j$. If $N$ happens to be a ring, then $f^{a}_{ij}$ corresponds to the $n \times n$ matrix with $a$ in position $(i, j)$ and zeros elsewhere. We refer to \cite{meldrum1986matrix, meldrum1996intermediate, veldsman1992special, meyer1994chains} for further definitions and notations in matrix nearrings.
	
	For completeness we provide some of the results and definitions that have been frequently used in the article.
	
	The set of ideals of a nearring $N$ forms a complete modular lattice under inclusion, denoted by $\mathfrak L(N)$. The join and meet operations of this lattice are $+$ and $\cap$, respectively. There exist three maps $(~)^*$, $(~)^+$, and $(~)_*$ between $\mathfrak L(N)$ and $\mathfrak L(M_n(N))$, as defined below. The maps $(~)^*$ and $(~)^+$ are order-preserving injections from $\mathfrak L(N)$ to $\mathfrak L(M_n(N))$, and the map $(~)_*$ is an order-preserving surjection from $\mathfrak L(M_n(N))$ to $\mathfrak L(N)$.
	
	\begin{enumerate}[label = (\roman*)]
		\item $I^* = \left\{\, A \in M_n(N) \mid A\rho \in I^n~ \text{for all} ~\rho \in N^n \,\right\}$ is an ideal in $M_n(N)$ for every ideal $I$ of $N$.
		\item $I^+ = \left\langle\, f_{ij}^x \mid x \in I \,\right\rangle$ is an ideal in $M_n(N)$ contained in $I^*$, for every ideal $I$ of $N$.
		\item $\mathcal{I}_* = \left\{\, x \in N \mid x \in \im(\pi_j(A)) ~\text{for some}~ A \in \mathcal{I} \,\right\}$ is an ideal in $N$ for every ideal $\mathcal{I}$ of $M_n(N)$.
	\end{enumerate}
	
	\begin{theorem}[\cite{meldrum1986matrix}]\label{thm:lowerstar}
		Let $\mathcal{J}$ be an ideal of $M_n(N)$. Then 
		\begin{enumerate}[label = (\roman*)]
			\item $a \in \mathcal{J}_*$ if and only if $f_{ij}^a \in \mathcal{J}$ for $1 \leq i,j \leq n$.
			\item $\mathcal{J} \subseteq (\mathcal{J}_*)^*$.
		\end{enumerate}
	\end{theorem}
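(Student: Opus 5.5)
The statement is Theorem~\ref{thm:lowerstar} of \cite{meldrum1986matrix}. I plan to prove part (i) first and then obtain part (ii) from it. Throughout I will use that $N$ has an identity and that each $f^a_{ij}=\iota_i f^a\pi_j$ lies in $M_n(N)$. The key computation is that an ideal $\mathcal{J}$ of $M_n(N)$ is closed under multiplication on both sides by the elementary maps $f^1_{kl}$, in the one-sided ways that ideals of a right nearring permit. Multiplication on the right is unrestricted, since $\mathcal{J}M_n(N)\subseteq\mathcal{J}$. Multiplication on the left must use the ideal condition $B(C+A)-BC\in\mathcal{J}$, taken with $C=0$. This needs $B0=0$, which holds because $f^a_{ij}(0)=0$ and maps of the form $\iota_i f^a\pi_j$ are zero-symmetric.

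\textbf{Part (i).} For $(\Leftarrow)$, if $f^a_{ij}\in\mathcal{J}$, then evaluating at $\iota_j(1)$ gives $\iota_i(a)$. Hence $a=\pi_i(f^a_{ij}\,\iota_j(1))$ lies in the image of $\pi_i$ applied to an element of $\mathcal{J}$, so $a\in\mathcal{J}_*$.

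For $(\Rightarrow)$, suppose $a=\pi_j(A\rho)$ for some $A\in\mathcal{J}$ and $\rho\in N^n$. I will write $\rho$ as a constant map. The map $\sum_k f^{\rho_k}_{kl}$ sends $x$ to $(\rho_1x_l,\dots,\rho_nx_l)$; it lies in $M_n(N)$ and agrees with $\rho$ on inputs whose $l$-th coordinate is $1$. Then $A\circ\sum_k f^{\rho_k}_{kl}\in\mathcal{J}$ by right absorption. Composing on the left with $f^1_{ij}$ gives $f^1_{ij}\,A\,\sum_k f^{\rho_k}_{kl}\in\mathcal{J}$ by the zero-symmetric left absorption above. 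This map is $x\mapsto\iota_i(\pi_j(A(\rho x_l)))$, where $\rho x_l$ means $(\rho_1x_l,\dots,\rho_nx_l)$.

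\textbf{Main obstacle.} The map just obtained is not yet $f^a_{il}$, because $A$ need not be linear in $x_l$. To fix this I will work in the group generated by the image of $\mathcal{J}$ on single coordinates and use the normality of $\mathcal{J}$. If this does not go through directly, I will rely on the standard fact from \cite{meldrum1986matrix} that $\{a \mid f^a_{ij}\in\mathcal{J}\ \text{for all } i,j\}$ is an ideal of $N$ containing all such values $\pi_j(A\rho)$. I expect this linearisation step to be the hardest part of the argument.

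\textbf{Part (ii).} Given $A\in\mathcal{J}$ and $\rho\in N^n$, every coordinate of $A\rho$ lies in $\mathcal{J}_*$ by the very definition of $\mathcal{J}_*$. Hence $A\rho\in(\mathcal{J}_*)^n$ for every $\rho$, which is exactly the condition $A\in(\mathcal{J}_*)^*$. So $\mathcal{J}\subseteq(\mathcal{J}_*)^*$, and this part is immediate once the definitions are unwound.
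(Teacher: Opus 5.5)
\textbf{There is a gap in the forward direction of (i).} Your reverse direction of (i) is correct. Your argument for (ii) is also correct, and it does not actually depend on (i): it is immediate from the definition of $\mathcal{J}_*$. The forward direction of (i), however, is never completed. You do not carry out the ``linearisation'' step, and ``use the normality of $\mathcal{J}$'' is not a concrete argument. Your fallback is circular: quoting from \cite{meldrum1986matrix} that $\{a \mid f^a_{ij}\in\mathcal{J}\}$ contains every value $\pi_j(A\rho)$ is exactly the implication you are trying to prove. The paper only quotes this theorem and gives no proof, so you have to supply this step yourself.

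\textbf{The missing idea.} The obstacle you identified is not the real one. You do not need $A$ to be additive in $x_l$. You only need every $A\in M_n(N)$ to commute with right scalar multiplication on $N^n$, taken coordinatewise:
\[
A(\rho t)=(A\rho)t \qquad\text{for all } \rho\in N^n,\ t\in N .
\]
This holds by induction on how $M_n(N)$ is generated.
\begin{itemize}
\item For a generator, associativity and $0t=0$ give $f^r_{ij}(\rho t)=\iota_i(r(\rho_j t))=\iota_i((r\rho_j)t)=(\iota_i(r\rho_j))t$.
\item The identity is preserved under sums and negatives, because $s\mapsto st$ is an endomorphism of $(N,+)$ by right distributivity.
\item It is preserved under composition trivially.
\end{itemize}
With this identity, your composite $f^1_{ij}\,A\,\sum_k f^{\rho_k}_{kl}$ sends $x$ to
\[
\iota_i\bigl(\pi_j(A(\rho x_l))\bigr)=\iota_i\bigl(\pi_j(A\rho)\,x_l\bigr)=\iota_i(a x_l).
\]
So it is exactly $f^a_{il}$. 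Since $i$ and $l$ are arbitrary, every $f^a_{il}$ lies in $\mathcal{J}$.

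\textbf{A small correction.} The maps $\iota_i f^a\pi_j$ are not zero-symmetric in general. We have $f^a_{ij}(0)=\iota_i(a0)$, and $a0\neq 0$ can occur in a non-zero-symmetric right nearring. You only use this property for $f^1_{ij}$, where $1\cdot 0=0$, so your left-absorption step via $B(0+A)-B0\in\mathcal{J}$ still stands.
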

	
	\begin{proposition}[\cite{meldrum1996intermediate}]
		For any ideal $K$ in $N$, we have $(K^*)_* = K = (K^+)_*$.
	\end{proposition}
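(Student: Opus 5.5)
The statement has two equalities, and I will prove each by double inclusion. Throughout I use the characterization in Theorem~\ref{thm:lowerstar}(i): for any ideal $\mathcal J$ of $M_n(N)$, an element $a \in N$ lies in $\mathcal J_*$ exactly when $f^a_{ij} \in \mathcal J$ for all $i,j$. The chain $I^+ \subseteq I^*$ is already noted above, and $(~)_*$ is order-preserving. Hence $(K^+)_* \subseteq (K^*)_*$. It therefore suffices to show two inclusions:
\[
K \subseteq (K^+)_* \qquad\text{and}\qquad (K^*)_* \subseteq K .
\]
Together these give $K \subseteq (K^+)_* \subseteq (K^*)_* \subseteq K$, and hence both equalities at once.

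For $K \subseteq (K^+)_*$, take $a \in K$. By the definition of $K^+$, each generator $f^a_{ij}$ lies in $K^+$. Theorem~\ref{thm:lowerstar}(i) then gives $a \in (K^+)_*$ immediately.

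For $(K^*)_* \subseteq K$, take $a \in (K^*)_*$. By Theorem~\ref{thm:lowerstar}(i), $f^a_{11} \in K^*$, so $f^a_{11}\rho \in K^n$ for every $\rho \in N^n$. Choose $\rho = \iota_1(1) = (1,0,\ldots,0)$, which uses that $N$ has an identity. Then
\[
f^a_{11}\rho = \iota_1(a\cdot 1) = \iota_1(a).
\]
The first coordinate of this vector is $a$, and it lies in $K$, so $a \in K$.

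The only delicate point is bookkeeping with one-sided distributivity: $a\cdot 1 = a$ requires $1$ to be a right identity, which holds because $N$ has a two-sided $1$. I am also relying on Theorem~\ref{thm:lowerstar}(i) as stated, rather than the raw definition of $\mathcal J_*$ via images of $\pi_j A$. If one wanted to work from the raw definition instead, the inclusion $K \subseteq (K^+)_*$ would come from evaluating $f^a_{11}$ at $\iota_1(1)$, whose first coordinate is $a$. The reverse inclusion would need that every $A \in K^*$ has all coordinate images in $K$, which is exactly the defining property of $K^*$.
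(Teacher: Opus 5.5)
Your proof is correct. The paper gives no proof of this proposition; it is quoted from \cite{meldrum1996intermediate}. Your chain $K \subseteq (K^+)_* \subseteq (K^*)_* \subseteq K$, driven by Theorem~\ref{thm:lowerstar}(i), is therefore a self-contained verification rather than a variant of an argument in the paper.

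The middle inclusion uses $K^+ \subseteq K^*$, which the paper asserts without proof. It is immediate: for $x \in K$ one has $f^x_{ij}\rho = \iota_i(x\rho_j) \in K^n$ because $KN \subseteq K$, and $K^*$ is an ideal.

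Proving the statement directly, as you do, is also the right order of logic. \ref{it:GC1} only records the inequalities $(K^*)_* \subseteq K \subseteq (K^+)_*$, not the equalities. Moreover, the paper's proof that $((~)_*,(~)^*)$ is a Galois connection needs your inclusion $(J^*)_* \subseteq J$ to get from $\mathcal{I} \subseteq J^*$ back to $\mathcal{I}_* \subseteq J$. So this proposition is an input to that theorem, not a consequence of it.

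Your closing remark about the identity is accurate. Using the raw definition of $(~)_*$, the inclusion $(K^*)_* \subseteq K$ holds with no identity at all. The element $1$ is needed only for $K \subseteq (K^+)_*$, and there it is indispensable: for a zero nearring every $f^a_{ij}$ vanishes, so $(K^+)_* = \{0\}$.
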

	
	\begin{lemma}[\cite{salvankar2024essential}]\label{lem:sumupperstar}
		Let $I,J\unlhd N$ be such that $I+J = N$. Then $I^*+J^* = M_n(N)$. 
	\end{lemma}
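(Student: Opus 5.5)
We need to prove: if $I+J=N$ then $I^*+J^*=M_n(N)$. The plan is to show that the identity map of $N^n$ lies in $I^*+J^*$ and then use that $I^*+J^*$ is an ideal of $M_n(N)$. Since $N$ has identity $1$, the identity of $M_n(N)$ is $\id = f_{11}^1+f_{22}^1+\cdots+f_{nn}^1$. If an ideal of $M_n(N)$ contains the identity, it is all of $M_n(N)$, because $B = B\cdot \id$ and ideals are closed under right multiplication by arbitrary elements, i.e. $\mathcal I M_n(N)\subseteq \mathcal I$. Since $I^*$ and $J^*$ are ideals of $M_n(N)$ and the ideals form a lattice with join $+$, $I^*+J^*$ is an ideal. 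So it suffices to show $\id\in I^*+J^*$.

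Next, write $1=a+b$ with $a\in I$ and $b\in J$, which is possible since $I+J=N$. By right distributivity, $f_{kk}^{1}(x)=\iota_k(1\cdot x_k)=\iota_k(ax_k+bx_k)$, so $f_{kk}^1=f_{kk}^a+f_{kk}^b$ pointwise as maps on $N^n$. We need $f_{kk}^a\in I^*$, i.e. $f_{kk}^a\rho\in I^n$ for all $\rho$. Its only nonzero coordinate is $a\rho_k\in IN\subseteq I$, so this holds. Similarly $f_{kk}^b\in J^*$.

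It follows that $\id=\sum_k (f_{kk}^a+f_{kk}^b)$. The main obstacle is that $(N^n,+)$ may be non-abelian, so this sum cannot simply be rearranged into $(\sum_k f_{kk}^a)+(\sum_k f_{kk}^b)$. To get around this, note that the summands for different $k$ have disjoint supports and hence commute in $N^n$. Therefore $\id=\bigl(\sum_k f_{kk}^a\bigr)+\bigl(\sum_k f_{kk}^b\bigr)$ coordinatewise. Indeed, in coordinate $k$ both sides give $ax_k+bx_k$, and it is checked coordinatewise because addition in $M(N^n)$ is pointwise and addition in $N^n$ is coordinatewise. The first bracket lies in $I^*$ and the second in $J^*$, since these are subgroups. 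Hence $\id\in I^*+J^*$, and so $I^*+J^*=M_n(N)$.

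Alternatively, one could avoid the identity argument by using Theorem~\ref{thm:lowerstar}(ii) together with $(I^*+J^*)_*\supseteq I+J=N$. However, passing from $\mathcal J_*=N$ to $\mathcal J=M_n(N)$ still requires $f_{kk}^1\in\mathcal J$, which brings us back to the same computation. So the direct construction above is the one I would use.
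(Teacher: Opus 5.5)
Your proof is correct. The paper gives no proof of this lemma; it is quoted from \cite{salvankar2024essential}, so there is no in-paper argument to compare yours against. Your direct construction checks out. You split $1=a+b$ with $a\in I$ and $b\in J$, and write $\id=\sum_k f^1_{kk}=\left(\sum_k f^a_{kk}\right)+\left(\sum_k f^b_{kk}\right)$, which is legitimate because addition in $N^n$ is coordinatewise. Then $f^a_{kk}\in I^*$ since $aN\subseteq I$, and likewise $f^b_{kk}\in J^*$. This is a self-contained verification.

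There is one small slip. To conclude that an ideal $\mathcal I$ containing $\id$ is all of $M_n(N)$, you need $B=\id\cdot B\in\mathcal I M_n(N)\subseteq\mathcal I$. The equation $B=B\cdot\id$ that you wrote puts $\id$ on the wrong side for the closure condition you invoke. The other ideal axiom would only yield $B-B0\in\mathcal I$, which does not suffice when $N$ is not zero-symmetric.

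Your dismissal of the alternative route is also too pessimistic. Since $(~)_*$ is order-preserving and $(K^*)_*=K$, you get $(I^*+J^*)_*\supseteq (I^*)_*+(J^*)_*=I+J=N$, so $1\in(I^*+J^*)_*$. Theorem~\ref{thm:lowerstar}(i) with $a=1$ then gives $f^1_{kk}\in I^*+J^*$ for every $k$ directly. Hence $\id\in I^*+J^*$, without redoing the splitting of $1$. That route is shorter, but it relies on the Meldrum--van der Walt characterization of $\mathcal J_*$. Yours is elementary and uses only the definitions of $I^*$ and of $M_n(N)$.
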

	
	Now, we recall the definition and basic properties of Galois connections \cite{GierzEtAl2003}, and then show that the maps defined above form Galois connections between the lattice of ideals of $N$ and the lattice of ideals of $M_n(N)$.
	
	\begin{definition}
		Let $(P, \leq)$ and $(Q, \leq)$ be partially ordered sets and $f \colon P \to Q$ and $g \colon Q \to P$ be monotone maps such that $f(p) \leq q$ if and only if $p \leq g(q)$, for all $p \in P$, $q \in Q$. Then $(f, g)$ is called a \emph{Galois connection} or an \emph{adjunction} between $P$ and $Q$. We call $f$ a \emph{left adjoint} of $g$ and $g$ a \emph{right adjoint} of $f$, denoted by $f \dashv g$.	
	\end{definition}
	
	\begin{proposition}
		Any Galois connection $(f, g)$ satisfies the following properties:
		\begin{enumerate}[label = (\roman*)]
			\item For monotone maps $f \colon P \to Q$ and $g \colon Q \to P$, the pair $(f, g)$ is a Galois connection if and only if $f \circ g \leq \id_Q$ and $\id_P \leq g \circ f$.
			\item If $f \dashv g$, then $f \circ g$ and $g \circ f$ are idempotent.
			\item If $f \dashv g$, then $f \circ g \circ f = f$ and $g \circ f \circ g = g$.
			\item Left adjoints preserve joins and right adjoints preserve meets.
		\end{enumerate}
	\end{proposition}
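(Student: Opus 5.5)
The approach is to derive everything from the defining adjunction condition $f(p) \leq q \iff p \leq g(q)$ together with monotonicity, and to prove the four items in order, using the earlier ones for the later ones.

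\emph{Item (i).} Suppose first that $(f,g)$ is a Galois connection. Take $p \in P$ and put $q = f(p)$. Since $f(p) \leq f(p)$, the adjunction gives $p \leq g(f(p))$, so $\id_P \leq g \circ f$. Dually, take $q \in Q$ and put $p = g(q)$. Since $g(q) \leq g(q)$, we get $f(g(q)) \leq q$, so $f \circ g \leq \id_Q$.

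Conversely, assume the two inequalities hold and that $f$ and $g$ are monotone. If $f(p) \leq q$, then by monotonicity of $g$ we have $p \leq g(f(p)) \leq g(q)$. If $p \leq g(q)$, then by monotonicity of $f$ we have $f(p) \leq f(g(q)) \leq q$. Hence $(f,g)$ is a Galois connection.

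\emph{Item (iii).} This is the step where antisymmetry is really used, so it is worth doing before (ii).
\begin{itemize}
\item Apply monotone $f$ to $p \leq gf(p)$ to get $f(p) \leq fgf(p)$.
\item Apply $fg \leq \id_Q$ at the point $f(p)$ to get $fgf(p) \leq f(p)$.
\end{itemize}
By antisymmetry, $fgf = f$. The identity $gfg = g$ follows symmetrically: apply $g$ to $fg(q) \leq q$, and apply $\id_P \leq gf$ at the point $g(q)$.

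\emph{Item (ii).} This is immediate from (iii). We have $(fg)(fg) = (fgf)g = fg$, and $(gf)(gf) = g(fgf) = gf$.

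\emph{Item (iv).} Let $S \subseteq P$ have a join $\bigvee S$. We show that $f(\bigvee S)$ is the join of $f(S)$.
\begin{itemize}
\item It is an upper bound of $f(S)$, because $f$ is monotone.
\item It is the least upper bound. If $q$ is an upper bound of $f(S)$, then $f(s) \leq q$ for all $s \in S$. By the adjunction $s \leq g(q)$ for all $s$, so $\bigvee S \leq g(q)$, and by the adjunction again $f(\bigvee S) \leq q$.
\end{itemize}
The statement that $g$ preserves meets is the order-dual argument.

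No step is a genuine obstacle. The one point needing care is (iii), where both inequalities must be combined via antisymmetry of the partial order.
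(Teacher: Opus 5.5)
Your proof is correct in all four parts. The paper gives no proof of this proposition; it only recalls the result from \cite{GierzEtAl2003}, where it is proved by the same standard arguments you use: the two inequalities $fg \leq \id_Q$ and $\id_P \leq gf$ for (i), antisymmetry for (iii) and hence (ii), and the adjunction applied to upper and lower bounds for (iv), which holds for arbitrary subsets whose join exists and so covers the infinite sums and intersections used in (GC6)--(GC9).
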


	\begin{theorem}
		The pairs of functions $((~)_*, (~)^*)$ and $((~)^+, (~)_*)$ are Galois connections between $\mathfrak L(N)$ and $\mathfrak L(M_n(N))$. 
	\end{theorem}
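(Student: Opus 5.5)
The plan is to verify the defining adjunction condition directly for each pair, using only the facts already recorded: monotonicity of $(~)^*$, $(~)^+$ and $(~)_*$, Theorem~\ref{thm:lowerstar}, and the identities $(K^*)_* = K = (K^+)_*$. Throughout, $I$ denotes an ideal of $N$ and $\mathcal J$ an ideal of $M_n(N)$. Since all three maps are order-preserving, it only remains to check the two equivalences
\[
\mathcal J_* \subseteq I \iff \mathcal J \subseteq I^* \qquad\text{and}\qquad I^+ \subseteq \mathcal J \iff I \subseteq \mathcal J_* .
\]

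For the pair $((~)_*, (~)^*)$, I would first assume $\mathcal J_* \subseteq I$. Applying the monotone map $(~)^*$ gives $(\mathcal J_*)^* \subseteq I^*$. Theorem~\ref{thm:lowerstar}(ii) gives $\mathcal J \subseteq (\mathcal J_*)^*$, so $\mathcal J \subseteq I^*$. Conversely, if $\mathcal J \subseteq I^*$, then applying the monotone map $(~)_*$ and the identity $(I^*)_* = I$ yields $\mathcal J_* \subseteq (I^*)_* = I$.

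For the pair $((~)^+, (~)_*)$, I would first assume $I^+ \subseteq \mathcal J$. Applying $(~)_*$ and using $(I^+)_* = I$ gives $I \subseteq \mathcal J_*$. Conversely, suppose $I \subseteq \mathcal J_*$. For every $x \in I$ we then have $x \in \mathcal J_*$, and Theorem~\ref{thm:lowerstar}(i) gives $f^x_{ij} \in \mathcal J$ for all $1 \le i,j \le n$. Thus $\mathcal J$ is an ideal containing every generator of $I^+ = \langle f^x_{ij} \mid x \in I\rangle$. Because $I^+$ is the smallest ideal of $M_n(N)$ containing these generators, it follows that $I^+ \subseteq \mathcal J$.

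The only step needing care is this last one. There I must read the notation $\langle\,\cdot\,\rangle$ as the ideal generated in $M_n(N)$, not merely the subgroup or subnearring generated, so that containment of the generators in the ideal $\mathcal J$ forces $I^+ \subseteq \mathcal J$. Everything else is a formal consequence of monotonicity together with the cited identities.
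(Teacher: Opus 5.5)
Your proof is correct and follows essentially the same route as the paper. You verify $\mathcal J_*\subseteq I \iff \mathcal J\subseteq I^*$ via $\mathcal J\subseteq(\mathcal J_*)^*$ and monotonicity, and $I^+\subseteq\mathcal J\iff I\subseteq\mathcal J_*$ via Theorem~\ref{thm:lowerstar}(i) and the fact that $I^+$ is the ideal generated by the $f^x_{ij}$. You are in fact slightly more explicit than the paper: you invoke $(I^*)_*=I$ for the converse in the first pair, which the paper's chain of equivalences leaves implicit, and you use $(I^+)_*=I$ for the forward direction of the second pair.
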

	
	\begin{proof}
		First, we show that $((~)_*, (~)^*)$ is a Galois connection. Let $\mathcal{I}$ be an ideal in $M_n(N)$ and $J$ be an ideal in $N$. Then $\mathcal{I}_* \subseteq J$ if and only if $\mathcal{I} \subseteq (\mathcal{I}_*)^* \subseteq J^*$ if and only if $\mathcal{I} \subseteq J^*$. Since $(~)_*$ and $(~)^*$ are order preserving, $((~)_*, (~)^*)$ is a Galois connection.
		
		Next, we show that $((~)^+, (~)_*)$ is a Galois connection. Let $I$ be an ideal in $N$ and $\mathcal{J}$ be an ideal in $M_n(N)$. Then $I^+ \subseteq \mathcal{J}$ if and only if for all $x\in I$, $f_{11}^x \in \mathcal{J}$ if and only if $x \in \mathcal{J}_*$. This implies that $I^+ \subseteq \mathcal{J}$ if and only if $I \subseteq \mathcal{J}_*$. Since $(~)_*$ and $(~)^+$ are order preserving, $((~)^+, (~)_*)$ is a Galois connection. 	
	\end{proof}
	
	The following properties are proved in \cite{meldrum1986matrix, meldrum1996intermediate, veldsman1992special, meyer1994chains}. But once we observe that $((~)_*, (~)^*)$ and $((~)^+, (~)_*)$ are Galois connections, these are direct consequences.
	\begin{proposition}\label{prop:GCProps}
		Let $I$ and $\mathcal I$ be ideals in $N$ and $M_n(N)$, respectively. Let $\{I_\alpha\}$ and $\{ \mathcal I_\alpha \}$ be collections of ideals in $N$ and $M_n(N)$ respectively. Then the following hold:
		\begin{enumerate}[leftmargin = *, label = (GC\arabic*)]
			\item \label{it:GC1} $(I^*)_* \subseteq I \subseteq (I^+)_*$.
			\item \label{it:GC2} $(\mathcal{I}_*)^+ \subseteq \mathcal{I} \subseteq (\mathcal{I}_*)^*$.
			\item \label{it:GC3} $((I^*)_*)^* = I^*$.
			\item \label{it:GC4} $((\mathcal{I}_*)^*)_* = \mathcal{I}_* = ((\mathcal{I}_*)^+)_*$.
			\item \label{it:GC5} $((I^+)_*)^+ = I^+$.
			\item \label{it:GC6} $(\sum\limits_\alpha \mathcal I_\alpha)_* = \sum\limits_\alpha (\mathcal I_\alpha)_*$.
			\item \label{it:GC7} $(\sum\limits_\alpha I_\alpha)^+ = \sum\limits_\alpha (I_\alpha)^+$.
			\item \label{it:GC8} $(\bigcap\limits_\alpha \mathcal I_\alpha)_* = \bigcap\limits_\alpha (\mathcal I_\alpha)_*$.
			\item \label{it:GC9} $(\bigcap\limits_\alpha I_\alpha)^* = \bigcap\limits_\alpha (I_\alpha)^*$.
		\end{enumerate}
	\end{proposition}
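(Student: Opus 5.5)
The plan is to derive every item of Proposition~\ref{prop:GCProps} from the two adjunctions just established, $(~)_* \dashv (~)^*$ and $(~)^+ \dashv (~)_*$, using only the general properties (i)--(iv) of Galois connections. The monotonicity of the three maps is already recorded, so nothing specific to nearrings is needed beyond the preceding theorem.

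First, I will read off the unit/counit inequalities. For $(~)_* \dashv (~)^*$, with $f=(~)_*\colon \mathfrak L(M_n(N))\to\mathfrak L(N)$ and $g=(~)^*$, property (i) gives $f\circ g\le \id$ and $\id\le g\circ f$. These are $(I^*)_*\subseteq I$ and $\mathcal I\subseteq(\mathcal I_*)^*$. For $(~)^+\dashv(~)_*$, with $f=(~)^+\colon\mathfrak L(N)\to\mathfrak L(M_n(N))$ and $g=(~)_*$, property (i) gives $(\mathcal I_*)^+\subseteq\mathcal I$ and $I\subseteq (I^+)_*$. Together these are exactly \ref{it:GC1} and \ref{it:GC2}.

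Next, I will apply property (iii), $g\circ f\circ g=g$ and $f\circ g\circ f=f$, to each adjunction.
\begin{itemize}
\item From the first adjunction, $g f g = g$ gives $((I^*)_*)^*=I^*$, which is \ref{it:GC3}, and $f g f=f$ gives $((\mathcal I_*)^*)_*=\mathcal I_*$.
\item From the second adjunction, $g f g=g$ gives $((\mathcal I_*)^+)_*=\mathcal I_*$; together with the previous bullet this yields \ref{it:GC4}. Also $f g f=f$ gives $((I^+)_*)^+=I^+$, which is \ref{it:GC5}.
\end{itemize}

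Finally, I will use property (iv): left adjoints preserve joins and right adjoints preserve meets. In $\mathfrak L(N)$ and $\mathfrak L(M_n(N))$, joins are sums of ideals and meets are intersections, and both lattices are complete, so this applies to arbitrary families.
\begin{itemize}
\item $(~)_*$ is a left adjoint (of $(~)^*$), so it preserves sums; this is \ref{it:GC6}.
\item $(~)^+$ is a left adjoint (of $(~)_*$), so it preserves sums; this is \ref{it:GC7}.
\item $(~)_*$ is also a right adjoint (of $(~)^+$), so it preserves intersections; this is \ref{it:GC8}.
\item $(~)^*$ is a right adjoint, so it preserves intersections; this is \ref{it:GC9}.
\end{itemize}

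The only point needing care is the last step, namely that the lattice-theoretic join of an arbitrary family of ideals is their sum and the meet is their intersection. This is standard: an arbitrary sum of ideals is an ideal and is the least ideal containing each member, and an arbitrary intersection of ideals is an ideal. One must also use the form of (iv) for arbitrary, not just finite, joins and meets; this holds for Galois connections between complete lattices by the usual argument $f(\bigvee p_\alpha)\le q \iff \bigvee p_\alpha\le g(q)\iff$ each $f(p_\alpha)\le q$. Beyond this, the proof is just bookkeeping of which map plays $f$ and which plays $g$ in each adjunction, since $(~)_*$ plays both roles.
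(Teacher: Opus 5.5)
Your proposal is correct and follows the paper's route exactly. The paper only asserts that (GC1)--(GC9) are direct consequences of the two Galois connections $(~)_* \dashv (~)^*$ and $(~)^+ \dashv (~)_*$. Your bookkeeping supplies the omitted verification: the unit/counit inequalities, the identities $fgf=f$ and $gfg=g$, and preservation of arbitrary joins and meets, with $(~)_*$ acting as both a left and a right adjoint.
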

	
	\begin{lemma}\label{lem:K contained in J_*}
		Let $K\unlhd N$, $\mathcal{J}\unlhd M_n(N)$. If $K^* \subsetneq \mathcal{J}$ then $K\subsetneq \mathcal{J}_*$.
	\end{lemma}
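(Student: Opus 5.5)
The plan is to split the claim into the containment $K \subseteq \mathcal{J}_*$ and the strictness $K \neq \mathcal{J}_*$. Both parts should follow from the Galois connection $((~)_*, (~)^*)$ together with the identity $(K^*)_* = K$.

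For the containment, apply the order-preserving map $(~)_*$ to $K^* \subseteq \mathcal{J}$. This gives $(K^*)_* \subseteq \mathcal{J}_*$. By the cited proposition of Meldrum and Meyer, $(K^*)_* = K$, so $K \subseteq \mathcal{J}_*$. Note that \ref{it:GC1} alone only gives $(K^*)_* \subseteq K$, which points the wrong way. We really need the equality $(K^*)_* = K$, which holds because $N$ has $1$, as assumed throughout this section.

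For strictness, argue by contradiction and suppose $K = \mathcal{J}_*$. By \ref{it:GC2}, or equivalently Theorem~\ref{thm:lowerstar}(ii), we have $\mathcal{J} \subseteq (\mathcal{J}_*)^* = K^*$. Combined with the hypothesis $K^* \subseteq \mathcal{J}$, this gives $\mathcal{J} = K^*$, contradicting $K^* \subsetneq \mathcal{J}$. Hence $K \subsetneq \mathcal{J}_*$.

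The only delicate point is in the first step: using the exact equality $(K^*)_* = K$ from the earlier proposition rather than just the Galois-connection inequality. Once that is in place, the argument is a direct lattice-theoretic consequence of the adjunction $(~)_* \dashv (~)^*$.
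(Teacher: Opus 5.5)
Your proof is correct and is essentially the paper's argument: you prove containment first, then strictness by contradiction using $\mathcal{J} \subseteq (\mathcal{J}_*)^* = K^*$. The only difference is that you justify $K \subseteq \mathcal{J}_*$ explicitly, via monotonicity of $(~)_*$ and $(K^*)_* = K$. The paper simply attributes it to the Galois connection $((~)_*, (~)^*)$, which on its own only gives $(K^*)_* \subseteq K$, so your version fills in a step the paper glosses over.
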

	
	\begin{proof}
		Suppose that $K^*\subseteq \mathcal{J}$. Then since $((~)_*, (~)^*)$ is a Galois connection, $K \subseteq \mathcal{J}_*$. Let $K^* \neq \mathcal{J}$. On the contrary, suppose that $\mathcal{J}_* = K$. Since $((~)_*, (~)^*)$ is a Galois connection, $\mathcal{J}_* = K$ implies $\mathcal{J} \subseteq K^*$. Therefore $K^* = \mathcal{J}$.
	\end{proof}
	
	\begin{theorem}\label{thm:complement in M}
		If $K$ is a complement of $H$ in $N$, then $K^*$ is a complement of $H^*$ in $M_n(N)$.
	\end{theorem}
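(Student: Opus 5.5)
We argue in two steps: first that $K^*\cap H^*=\{0\}$, and then that $K^*$ is maximal with this property. For the first step we use \ref{it:GC9}: $K^*\cap H^* = (K\cap H)^* = \{0\}^*$. It remains to see that $\{0\}^*$ is the zero ideal of $M_n(N)$. By definition, $\{0\}^*$ consists of those $A\in M_n(N)$ with $A\rho = 0$ for all $\rho\in N^n$. Since elements of $M_n(N)$ are maps $N^n\to N^n$, such an $A$ is the zero map. Hence $K^*\cap H^*=\{0\}$.

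For maximality, let $\mathcal{J}\unlhd M_n(N)$ with $K^*\subsetneq \mathcal{J}$; we must show $\mathcal{J}\cap H^*\neq\{0\}$. Lemma~\ref{lem:K contained in J_*} gives $K\subsetneq \mathcal{J}_*$. Since $K$ is a complement of $H$, maximality forces $\mathcal{J}_*\cap H\neq\{0\}$. Pick $0\neq a\in \mathcal{J}_*\cap H$. By Theorem~\ref{thm:lowerstar}(i), $f^a_{11}\in\mathcal{J}$.

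We next show $f^a_{11}\in H^*$. For any $\rho=(x_1,\dots,x_n)$ we have $f^a_{11}\rho=\iota_1(ax_1)$. Since $H$ is an ideal, $HN\subseteq H$, so $ax_1\in H$. Thus $f^a_{11}\rho\in H^n$ for every $\rho$, i.e.\ $f^a_{11}\in H^*$. Finally, $f^a_{11}\neq 0$ because $f^a_{11}(1,0,\dots,0)=\iota_1(a\cdot 1)=\iota_1(a)\neq 0$, using that $N$ has identity $1$. (If $1$ is only a right identity the computation is the same: $a\cdot 1=a$.) Hence $0\neq f^a_{11}\in \mathcal{J}\cap H^*$, which contradicts $\mathcal{J}\cap H^*=\{0\}$. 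So $K^*$ is maximal among ideals meeting $H^*$ trivially, i.e.\ a complement of $H^*$.

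The main obstacle is this maximality step: transferring the nontrivial intersection from $N$ back to $M_n(N)$. A cleaner alternative route would be $(\mathcal{J}\cap H^*)_*=\mathcal{J}_*\cap (H^*)_*=\mathcal{J}_*\cap H\neq\{0\}$ via \ref{it:GC8} and $(H^*)_*=H$. This forces $\mathcal{J}\cap H^*\neq\{0\}$, since $\{0\}_*=\{0\}$. I would use this as the primary argument and the explicit $f^a_{11}$ computation as a check.
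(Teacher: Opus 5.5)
Your proof is correct and follows the paper's argument. You use \ref{it:GC9} to get $K^*\cap H^*=\{0\}$, Lemma~\ref{lem:K contained in J_*} to get $K\subsetneq\mathcal{J}_*$, and then transfer the nontrivial intersection back to $M_n(N)$; your \emph{alternative} route via \ref{it:GC8} and $(H^*)_*=H$ is exactly what the paper uses implicitly when it asserts $\mathcal{I}_*\cap H=\{0\}$, and your explicit witness $f^a_{11}$ is simply an elementwise unpacking of that step (you also usefully verify $\{0\}^*=\{0\}$, which the paper leaves tacit).
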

	
	\begin{proof}
		Since $K\cap H = \{0\}$ by \ref{it:GC9}, we get $K^*\cap H^* = \{0\}$. Suppose that there exists an ideal $\mathcal{I}$ of $M_n(N)$ such that $K^* \subsetneq \mathcal{I}$ and $\mathcal{I} \cap H^* = \{0\}$. By Lemma~\ref{lem:K contained in J_*}, $K\subsetneq \mathcal{I}_*$ and $\mathcal{I}_* \cap H = \{0\}$ which is a contradiction to $K$ is a complement of $H$. Hence $K^*$ is a complement of $H^*$. 
	\end{proof}

	\begin{theorem}\label{thm:complement in N}
		If $\mathcal{K}$ is a complement of $\mathcal{H}$ in $M_n(N)$, then $\mathcal{K}_*$ is a complement of $\mathcal{H}_*$ in $N$.
	\end{theorem}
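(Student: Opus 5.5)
The plan is to verify the two defining conditions of a complement directly, using the Galois connection $((~)_*,(~)^*)$ and the properties in Proposition~\ref{prop:GCProps}.

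First, the disjointness condition. Since $\mathcal K \cap \mathcal H = \{0\}$, \ref{it:GC8} gives
$$\mathcal K_* \cap \mathcal H_* = (\mathcal K \cap \mathcal H)_* = \{0\}_*.$$
Theorem~\ref{thm:lowerstar}(i) shows that $\{0\}_* = \{0\}$: if $a \in \{0\}_*$, then $f^a_{11}$ is the zero map, and evaluating it at $\iota_1(1)$ gives $a = 0$. Hence $\mathcal K_* \cap \mathcal H_* = \{0\}$.

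Next, maximality, which I would prove by contradiction. Suppose $L \unlhd N$ satisfies $\mathcal K_* \subsetneq L$ and $L \cap \mathcal H_* = \{0\}$. I would pass to $L^*$ in $M_n(N)$ and show it contradicts the maximality of $\mathcal K$, in three steps.

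\emph{$L^*$ contains $\mathcal K$.} By \ref{it:GC2} and monotonicity of $(~)^*$, we have $\mathcal K \subseteq (\mathcal K_*)^* \subseteq L^*$.

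\emph{$L^*$ meets $\mathcal H$ trivially.} By \ref{it:GC2}, $\mathcal H \subseteq (\mathcal H_*)^*$. Then \ref{it:GC9} gives
$$L^* \cap \mathcal H \subseteq L^* \cap (\mathcal H_*)^* = (L \cap \mathcal H_*)^* = \{0\}^*.$$
Moreover, $\{0\}^* = \{0\}$, because a matrix $A$ with $A\rho = 0$ for all $\rho \in N^n$ is the zero map. So $L^* \cap \mathcal H = \{0\}$.

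\emph{The containment is strict.} If $\mathcal K = L^*$, then the Proposition $(K^*)_* = K$ gives $\mathcal K_* = (L^*)_* = L$, contradicting $\mathcal K_* \subsetneq L$.

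Thus $L^*$ strictly contains $\mathcal K$ and meets $\mathcal H$ trivially, which contradicts $\mathcal K$ being a complement of $\mathcal H$. Hence $\mathcal K_*$ is a complement of $\mathcal H_*$ in $N$.

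The main point needing care is the trivial-intersection step. The inclusion $\mathcal H \subseteq (\mathcal H_*)^*$ is used to transfer the condition $L \cap \mathcal H_* = \{0\}$ back up to $M_n(N)$, and this also requires the small check that $\{0\}^* = \{0\}$. The rest follows formally from the adjunction.
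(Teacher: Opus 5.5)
Your proof is correct and follows the paper's argument essentially step for step. You use (GC8) for the trivial intersection, then lift a hypothetical larger $L$ to $L^*$: (GC2) and (GC9) give $\mathcal K \subseteq L^*$ and $L^* \cap \mathcal H = \{0\}$, and $(L^*)_* = L$ forces the containment to be strict. Your explicit checks that $\{0\}_* = \{0\}$ and $\{0\}^* = \{0\}$ simply fill in details the paper leaves implicit.
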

	
	\begin{proof}
		Since $\mathcal{K}\cap \mathcal{H} = \{0\}$, by \ref{it:GC8} $\mathcal{K}_*\cap \mathcal{H}_* = \{0\}$. Suppose that there exists $I$ and ideal of $N$ such that $\mathcal{K}_* \subsetneq I$ and $I \cap \mathcal{H}_* = \{0\}$. Now by \ref{it:GC2}, $\mathcal{K}\subseteq(\mathcal{K}_*)^*\subsetneq I^*$ and $I^* \cap \mathcal{H} \subseteq I^* \cap (\mathcal{H}_*)^* = \{0\}$. Then by maximality of $\mathcal{K}$, we get $I^* = \mathcal{K}$. This implies that $I = (I^*)_* = \mathcal{K}_*$. Therefore $\mathcal{K}_*$ is a complement of $\mathcal{H}_*$ in $N$.
	\end{proof}
	
	By the fact that $I^+ \subseteq I^*$, one can observe that if $K$ is a complement of $H$ in $N$, then $K^+$ is not in general a complement of $H^+$ and $H^*$ in $M_n(N)$. In fact we can see in Theorem~\ref{thm:K=K_*^*} that if $K^+$ is a complement, then $K^* = K^+$, and also that the only complement ideals of $M_n(N)$ are full ideals.
	
	\begin{theorem}\label{thm:K=K_*^*}
		If $\mathcal{K}$ is a complement in $M_n(N)$, then $\mathcal{K} = (\mathcal{K}_*)^*$.
	\end{theorem}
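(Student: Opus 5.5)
Let $\mathcal{K}$ be a complement of an ideal $\mathcal{H}$ of $M_n(N)$. The plan is to show that $(\mathcal{K}_*)^*$ still meets $\mathcal{H}$ trivially. Since $\mathcal{K} \subseteq (\mathcal{K}_*)^*$ by \ref{it:GC2}, maximality of $\mathcal{K}$ then forces equality. So the whole argument reduces to proving $(\mathcal{K}_*)^* \cap \mathcal{H} = \{0\}$.

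First, since $\mathcal{K} \cap \mathcal{H} = \{0\}$, \ref{it:GC8} gives $\mathcal{K}_* \cap \mathcal{H}_* = \{0\}_* = \{0\}$. Next, by \ref{it:GC2} we have $\mathcal{H} \subseteq (\mathcal{H}_*)^*$, hence
\[
(\mathcal{K}_*)^* \cap \mathcal{H} \subseteq (\mathcal{K}_*)^* \cap (\mathcal{H}_*)^* .
\]
By \ref{it:GC9}, the right-hand side equals $(\mathcal{K}_* \cap \mathcal{H}_*)^* = \{0\}^*$.

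The remaining point is to check that $\{0\}^* = \{0\}$. By definition, $\{0\}^*$ consists of those $A \in M_n(N)$ with $A\rho \in \{0\}^n$ for all $\rho \in N^n$. Such an $A$ is the zero map on $N^n$, i.e. $A = 0$ in $M_n(N) \subseteq M(N^n)$. Alternatively, since $(~)^*$ is a right adjoint it preserves the empty meet, but the direct verification is simpler. This is the only step needing any care, and it is immediate.

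Combining these steps, $(\mathcal{K}_*)^*$ is an ideal containing $\mathcal{K}$ that meets $\mathcal{H}$ trivially. Since $\mathcal{K}$ is maximal with this property, $\mathcal{K} = (\mathcal{K}_*)^*$. In particular, every complement ideal of $M_n(N)$ has the form $I^*$, which is the "full ideal" remark preceding the theorem.
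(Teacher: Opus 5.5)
Your proof is correct and matches the paper's argument step for step: GC8 gives $\mathcal{K}_*\cap\mathcal{H}_*=\{0\}$, then GC9 together with $\mathcal{H}\subseteq(\mathcal{H}_*)^*$ gives $(\mathcal{K}_*)^*\cap\mathcal{H}=\{0\}$, and maximality finishes it, with the welcome addition of explicitly checking $\{0\}^*=\{0\}$, which the paper uses silently. One small correction to your aside: a right adjoint preserves the empty meet, which is the top element, so that argument yields $N^*=M_n(N)$ rather than $\{0\}^*=\{0\}$; the direct verification you give is the justification actually needed.
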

	
	\begin{proof}
		Since $\mathcal{K}$ is a complement in $M_n(N)$, there exists an ideal $\mathcal{J}$ of $M_n(N)$ such that $\mathcal{K}\cap\mathcal{J}=\{0\}$ and any ideal $\mathcal{K}'$ properly containing $\mathcal{K}$ has non-zero intersection with $\mathcal{J}$. Since $\mathcal{K}\cap\mathcal{J}=\{0\}$ by \ref{it:GC8}, we have $\mathcal{K}_*\cap\mathcal{J}_*=\{0\}$. Therefore $(\mathcal{K}_*\cap\mathcal{J}_*)^*=\{0\}$. By \ref{it:GC9}, $(\mathcal{K}_*)^*\cap(\mathcal{J}_*)^*=\{0\}$. Since $\mathcal{J}\subseteq(\mathcal{J}_*)^*$, $(\mathcal{K}_*)^*\cap\mathcal{J}=\{0\}$. Since $\mathcal{K}\subseteq(\mathcal{K}_*)^*$ and $\mathcal{K}$ is a complement of $\mathcal{J}$, we have $\mathcal{K} = (\mathcal{K}_*)^*$.
	\end{proof}
	
	\begin{theorem}
		The mapping $P\rightarrow P^*$ defines a one-to-one correspondence between the sets of complement ideals of $N$ and $M_n(N)$.
	\end{theorem}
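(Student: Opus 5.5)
We must show three things: the map $P \mapsto P^*$ sends complements of $N$ to complements of $M_n(N)$, it is injective, and every complement of $M_n(N)$ is hit. Each step comes from a result already proved.

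\emph{Well-defined.} Let $P$ be a complement in $N$, say of $H$. By Theorem~\ref{thm:complement in M}, $P^*$ is a complement of $H^*$ in $M_n(N)$. So the map sends the set of complement ideals of $N$ into the set of complement ideals of $M_n(N)$.

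\emph{Injective.} Suppose $P^* = Q^*$ for complements $P, Q$ of $N$. Applying $(~)_*$ and using $(K^*)_* = K$ (the proposition quoted from \cite{meldrum1996intermediate}) gives $P = (P^*)_* = (Q^*)_* = Q$. This step needs nothing specific to complements.

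\emph{Surjective.} This is the only step that needs real input. Let $\mathcal{K}$ be a complement of some $\mathcal{H}$ in $M_n(N)$. By Theorem~\ref{thm:complement in N}, $\mathcal{K}_*$ is a complement of $\mathcal{H}_*$ in $N$, so $P = \mathcal{K}_*$ lies in the domain. By Theorem~\ref{thm:K=K_*^*}, $\mathcal{K} = (\mathcal{K}_*)^* = P^*$, so $\mathcal{K}$ is in the image.

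The potential obstacle would have been that the image of $(~)^*$ misses complements which are not of the form $I^*$. Theorem~\ref{thm:K=K_*^*} removes exactly this worry, so the argument is just the assembly above. Finally, note that the inverse correspondence is $\mathcal{K} \mapsto \mathcal{K}_*$.
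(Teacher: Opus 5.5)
Your proof is correct and matches the paper's, which simply says the result follows from Theorems~\ref{thm:complement in M}, \ref{thm:complement in N} and \ref{thm:K=K_*^*}. You have made explicit how these give well-definedness and surjectivity, and how injectivity follows from $(K^*)_* = K$, a step the paper leaves implicit.
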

	
	\begin{proof}
		Follows from Theorems~\ref{thm:complement in M}, \ref{thm:complement in N} and \ref{thm:K=K_*^*}.
	\end{proof}
	
	\begin{lemma}
		Let $\mathcal{K}$ and $\mathcal{H}$ be ideals of $M_n(N)$. If $\mathcal{K}+\mathcal{H} = M_n(N)$, then $\mathcal{K}_*+ \mathcal{H}_* = N$.
	\end{lemma}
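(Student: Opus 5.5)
The plan is to use the Galois connection $((~)_*, (~)^*)$, established in the theorem above, in which $(~)_*$ is the left adjoint. By part (iv) of the proposition on Galois connections, left adjoints preserve joins; this is exactly property \ref{it:GC6}: $(\sum_\alpha \mathcal I_\alpha)_* = \sum_\alpha (\mathcal I_\alpha)_*$. Applied to the two-element family $\{\mathcal K, \mathcal H\}$, it gives
\[
\mathcal K_* + \mathcal H_* = (\mathcal K + \mathcal H)_* = (M_n(N))_* .
\]

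It then remains to check that $(M_n(N))_* = N$. One route is to note that $M_n(N) = N^*$. Indeed, every $A \in M_n(N)$ trivially satisfies $A\rho \in N^n$ for all $\rho \in N^n$. The proposition $(K^*)_* = K$ with $K = N$ then gives $(M_n(N))_* = (N^*)_* = N$.

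A second, more direct route uses Theorem~\ref{thm:lowerstar}(i). For every $a \in N$ the map $f^a_{ij}$ lies in $M_n(N)$, so $a \in (M_n(N))_*$. This shows $N \subseteq (M_n(N))_*$, and the reverse inclusion is automatic. Either route completes the argument: $\mathcal K_* + \mathcal H_* = N$.

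The only step needing any care is the join-preservation \ref{it:GC6}, which the paper takes as a direct consequence of the Galois connection. To avoid relying on it, I would argue as follows.
\begin{itemize}
\item Since $\mathcal K \subseteq \mathcal K + \mathcal H$ and $(~)_*$ is monotone, $\mathcal K_* \subseteq (\mathcal K+\mathcal H)_*$, and likewise for $\mathcal H$. This gives the inclusion $\subseteq$.
\item For the reverse inclusion, take $J = \mathcal K_* + \mathcal H_*$. By the adjunction, $\mathcal K_* \subseteq J$ gives $\mathcal K \subseteq J^*$, and likewise $\mathcal H \subseteq J^*$.
\item Hence $M_n(N) = \mathcal K + \mathcal H \subseteq J^*$. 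Applying $(~)_*$ and using $(J^*)_* = J$ yields $N = (M_n(N))_* \subseteq J$.
\end{itemize}
This actually proves the lemma directly, and I would present it in this form.
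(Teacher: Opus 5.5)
Your proposal is correct and is essentially the paper's proof, which simply combines \ref{it:GC6} with the fact $(M_n(N))_* = N$. Your bullet-point adjunction argument is the standard proof that a left adjoint preserves binary joins, specialized to this case, and your verification of $(M_n(N))_* = N$ (via $M_n(N) = N^*$ or Theorem~\ref{thm:lowerstar}(i)) supplies a detail the paper only asserts.
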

	
	\begin{proof}
		This follows from \ref{it:GC6} and the fact that $(M_n(N))_* = N$.
	\end{proof}
	
	\begin{theorem}\label{thm:supplementinN}
		If $\mathcal{K}$ is a supplement of $\mathcal{H}$ in $M_n(N)$, then $\mathcal{K}_*$ is a supplement of $\mathcal{H}_*$ in $N$.
	\end{theorem}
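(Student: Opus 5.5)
The plan is to mirror the proof of Theorem~\ref{thm:complement in N}, but using the Galois connection $((~)^+, (~)_*)$ instead of $((~)_*, (~)^*)$. The reason is that supplements concern joins, and $(~)^+$ is a left adjoint, so it preserves sums by \ref{it:GC7}.

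First, since $\mathcal{K}+\mathcal{H}=M_n(N)$, the preceding lemma gives $\mathcal{K}_*+\mathcal{H}_*=N$. For minimality, suppose $I\unlhd N$ with $I\subseteq \mathcal{K}_*$ and $I+\mathcal{H}_*=N$. We must show $I=\mathcal{K}_*$. I would lift $I$ to $I^+$ and check two facts.

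The first fact is $I^+\subseteq \mathcal{K}$. Since $(~)^+$ is monotone, $I^+\subseteq (\mathcal{K}_*)^+$, and $(\mathcal{K}_*)^+\subseteq\mathcal{K}$ by \ref{it:GC2}.

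The second fact is $I^+ + \mathcal{H}=M_n(N)$. By \ref{it:GC7},
\[
N^+=(I+\mathcal{H}_*)^+=I^+ + (\mathcal{H}_*)^+ ,
\]
and $(\mathcal{H}_*)^+\subseteq\mathcal{H}$ again by \ref{it:GC2}. Hence $N^+\subseteq I^+ + \mathcal{H}$.

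It then remains to observe that $N^+=M_n(N)$. The identity map of $N^n$ equals $\sum_{i=1}^n f^1_{ii}$, because $N$ has $1$ and $f^1_{ii}=\iota_i\pi_i$. This sum lies in $N^+$, and an ideal containing the identity of $M_n(N)$ is all of $M_n(N)$, since $A = A\cdot \id\in N^+$ for every $A$ (right ideal property). This is the step I expect to need the most care. An alternative route is the Galois-connection argument: $N^+\subseteq\mathcal{J}$ iff $N\subseteq\mathcal{J}_*$, so $N^+$ is the least ideal with $(\;)_*$ equal to $N$. Either way it should be routine.

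Having $I^+\subseteq\mathcal{K}$ and $I^+ + \mathcal{H}=M_n(N)$, the minimality of $\mathcal{K}$ as a supplement of $\mathcal{H}$ forces $I^+=\mathcal{K}$. Applying $(~)_*$ and the proposition $(K^+)_*=K$ then gives $I=(I^+)_*=\mathcal{K}_*$. Thus $\mathcal{K}_*$ is minimal with respect to $\mathcal{K}_*+\mathcal{H}_*=N$, i.e.\ a supplement of $\mathcal{H}_*$ in $N$.
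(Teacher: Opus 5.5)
Your proof is correct and follows essentially the same route as the paper: the paper writes $I=\mathcal{J}_*$ via surjectivity and works with $(\mathcal{J}_*)^+$, which is just your $I^+$. It then uses \ref{it:GC7} and \ref{it:GC2} to get $I^+\subseteq\mathcal{K}$ and $I^+ + \mathcal{H} = M_n(N)$, applies minimality of $\mathcal{K}$, and pulls back with $(~)_*$; your explicit check that $N^+=M_n(N)$ via $\id=\sum_i f^1_{ii}$ fills in a step the paper uses silently (the membership should be read as $A=\id\cdot A\in N^+$, i.e.\ the right-ideal property with $\id$ on the left).
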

	
	\begin{proof}
		Since $\mathcal{K} + \mathcal{H} = M_n(N)$, by \ref{it:GC6} $\mathcal{K}_* + \mathcal{H}_* = N$. Suppose that there exists an ideal $I$ in $N$ such that $I \subseteq \mathcal{K}_*$ and $I + \mathcal{H}_* = N$. Since $(~)_*$ is a surjection, $I = \mathcal{J}_*$ for some $\mathcal{J}$ in $M_n(N)$. Thus, $\mathcal{J}_* + \mathcal{H}_* = N$, and by \ref{it:GC7} and \ref{it:GC2}, $(\mathcal{J}_*)^+ + \mathcal{H} = M_n(N)$. But, by \ref{it:GC2}, $(\mathcal{J}_*)^+ \subseteq (\mathcal{K}_*)^+ \subseteq \mathcal{K}$. Then, by minimality of $\mathcal{K}$, $(\mathcal{J}_*)^+ = \mathcal{K}$. Therefore by \ref{it:GC4}, $I = \mathcal{K}_*$. Hence, $\mathcal{K}_*$ is a supplement of $\mathcal{H}_*$ in $N$.
	\end{proof}
	
	\begin{theorem}\label{thm:I^+supplement in M}
		Let $K$ be a supplement ideal of $H$ in $N$. Then $K^+$ is a supplement ideal of $H^+$ in $M_n(N)$.
	\end{theorem}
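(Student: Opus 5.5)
The plan is to show two things: first that $K^+ + H^+ = M_n(N)$, and then that $K^+$ is minimal with respect to this property. Both steps will go through the Galois connection $((~)^+, (~)_*)$, in the same way that Theorem~\ref{thm:complement in M} used $((~)_*, (~)^*)$.

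For the sum, I will use \ref{it:GC7}. This gives $K^+ + H^+ = (K + H)^+ = N^+$. Since $N$ has $1$, $N^+$ contains every $f_{ij}^a$, so $N^+ = M_n(N)$. Alternatively, $N^+$ is the ideal generated by all generators of $M_n(N)$, hence it is the whole nearring.

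For minimality, suppose $\mathcal{J}$ is an ideal of $M_n(N)$ with $\mathcal{J} \subseteq K^+$ and $\mathcal{J} + H^+ = M_n(N)$.
\begin{enumerate}[label = (\arabic*)]
\item Apply $(~)_*$ and use \ref{it:GC6} together with $(M_n(N))_* = N$. This gives $\mathcal{J}_* + (H^+)_* = N$.
\item By the Proposition stated after Theorem~\ref{thm:lowerstar}, $(H^+)_* = H$, so $\mathcal{J}_* + H = N$.
\item Monotonicity and $(K^+)_* = K$ give $\mathcal{J}_* \subseteq K$.
\item Since $K$ is a supplement of $H$, we conclude $\mathcal{J}_* = K$.
\item Now \ref{it:GC2} gives $K^+ = (\mathcal{J}_*)^+ \subseteq \mathcal{J} \subseteq K^+$, hence $\mathcal{J} = K^+$.
\end{enumerate}

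The main point needing care is the last step. It is where the left adjoint $(~)^+$ is essential: $(\mathcal{J}_*)^+ \subseteq \mathcal{J}$ holds because $(~)^+$ is the \emph{smallest} ideal with a given lower-star image. The analogous argument with $(~)^*$ would fail, since $(\mathcal{J}_*)^*$ contains $\mathcal{J}$ rather than being contained in it. I will also check that $N^+ = M_n(N)$ is justified, using that $f_{ij}^a$ with $a \in N$ generate $M_n(N)$ as a nearring and lie in $N^+$.
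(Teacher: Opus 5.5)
Your proposal is correct and follows essentially the same route as the paper: get $K^+ + H^+ = M_n(N)$, apply $(~)_*$ to any smaller $\mathcal{J}$ with $\mathcal{J} + H^+ = M_n(N)$, and use $(H^+)_* = H$, $(K^+)_* = K$ and the minimality of $K$ to conclude $\mathcal{J}_* = K$. Your step (5), $K^+ = (\mathcal{J}_*)^+ \subseteq \mathcal{J} \subseteq K^+$ via \ref{it:GC2}, spells out the paper's terse ``and hence $\mathcal{I} = K^+$'', and your check that $N^+ = M_n(N)$ fills in the first step, which the paper also leaves implicit.
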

	
	\begin{proof}
		Since $K + H =N$, $K^+ + H^+ = M_n(N)$. Suppose that $M_n(N)$ has an ideal $\mathcal{I} \subseteq K^+$ such that $\mathcal{I} + H^+ = M_n(N)$. Then, $\mathcal{I}_* + (H^+)_* = N$. Now since $(H^+)_* = H$, $\mathcal{I}_* + H = N$ and $\mathcal{I}_* \subseteq (K^+)_* = K$. Then by the minimality of $K$, $\mathcal{I}_* = K$, and hence $\mathcal I = K^+$. Therefore, $K^+$ is a supplement of $H^+$.
	\end{proof}
	
	By the fact that $I^+ \subseteq I^*$, one can observe that if $K$ is a supplement of $H$ in $N$, then $K^*$ is not in general a supplement of $H^+$ and $H^*$ in $M_n(N)$. In fact we can see in Theorem~\ref{thm:K=K_*^+} that if $K^*$ is a supplement, then $K^* = K^+$.
	
	\begin{theorem}\label{thm:K=K_*^+}
		If $\mathcal{K}$ is a supplement in $M_n(N)$, then $\mathcal{K} = (\mathcal{K}_*)^+$.
	\end{theorem}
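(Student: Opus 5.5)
Since $\mathcal{K}$ is a supplement in $M_n(N)$, I fix an ideal $\mathcal{H}$ of $M_n(N)$ such that $\mathcal{K}$ is minimal with respect to $\mathcal{K}+\mathcal{H} = M_n(N)$. The argument is the exact dual of the proof of Theorem~\ref{thm:K=K_*^*}. There we used the right adjoint $(~)^*$ of $(~)_*$ and the inclusion $\mathcal{K}\subseteq(\mathcal{K}_*)^*$. Here I will use the left adjoint $(~)^+$, the inclusion $(\mathcal{K}_*)^+\subseteq\mathcal{K}$ from \ref{it:GC2}, and the fact that left adjoints preserve joins, namely \ref{it:GC6} and \ref{it:GC7}.

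\textbf{Step 1.} From $\mathcal{K}+\mathcal{H} = M_n(N)$, \ref{it:GC6} together with $(M_n(N))_* = N$ gives $\mathcal{K}_*+\mathcal{H}_* = N$.

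\textbf{Step 2.} Apply $(~)^+$. By \ref{it:GC7}, $(\mathcal{K}_*)^+ + (\mathcal{H}_*)^+ = N^+$. We need $N^+ = M_n(N)$. Since $N$ has $1$, every generator $f^a_{ij}$ of $M_n(N)$ lies in $N^+$, so $N^+$ is all of $M_n(N)$. Alternatively, this follows from the Galois connection: $N\subseteq (M_n(N))_*$ gives $N^+\subseteq M_n(N)$, and conversely $(N^+)_* = N$.

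\textbf{Step 3.} By \ref{it:GC2}, $(\mathcal{H}_*)^+\subseteq\mathcal{H}$. Therefore
\[
M_n(N) = (\mathcal{K}_*)^+ + (\mathcal{H}_*)^+ \subseteq (\mathcal{K}_*)^+ + \mathcal{H},
\]
so $(\mathcal{K}_*)^+ + \mathcal{H} = M_n(N)$.

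\textbf{Step 4.} Again by \ref{it:GC2}, $(\mathcal{K}_*)^+\subseteq\mathcal{K}$. The minimality of $\mathcal{K}$ with respect to $\mathcal{K}+\mathcal{H}=M_n(N)$ then forces $(\mathcal{K}_*)^+=\mathcal{K}$.

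The only step I expect to need care is the identity $N^+ = M_n(N)$ in Step 2. I would justify it as above: $M_n(N)$ is generated as a nearring by the $f^a_{ij}$ with $a\in N$, and $N^+$ is an ideal containing all of them, so $N^+ = M_n(N)$. Everything else is formal from the Galois connection $((~)^+,(~)_*)$.
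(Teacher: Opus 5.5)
Your proof is correct and follows the paper's argument step for step: (GC6) to get $\mathcal{K}_*+\mathcal{H}_*=N$, (GC7) to lift back to $M_n(N)$, (GC2) to replace $(\mathcal{H}_*)^+$ by $\mathcal{H}$, and then minimality of $\mathcal{K}$ applied to $(\mathcal{K}_*)^+\subseteq\mathcal{K}$; you also state explicitly the identity $N^+=M_n(N)$, which the paper uses without comment. Keep the generator argument for that identity and drop the ``alternative'' Galois-connection justification: $(I^+)_*=I=(I^*)_*$ holds for every ideal $I$, even when $I^+\neq I^*$, so the equality $(N^+)_*=N$ cannot by itself force $N^+=M_n(N)$.
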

	
	\begin{proof}
		Since $\mathcal{K}$ is a supplement in $M_n(N)$, there exists an ideal $\mathcal{H}$ in $M_n(N)$ such that $\mathcal{K} + \mathcal{H} = M_n(N)$ and for any ideal $\mathcal{K}' \subseteq \mathcal{K}$, $\mathcal{K'} + \mathcal{H} \neq M_n(N)$. Since $\mathcal{K} + \mathcal{H} = M_n(N)$ by \ref{it:GC6}, we have $\mathcal{K}_* + \mathcal{H}_* = N$. Therefore $(\mathcal{K}_*)^+ + (\mathcal{H}_*)^+ = M_n(N)$. Now by \ref{it:GC2}, $(\mathcal{K}_*)^+ + \mathcal{H} = M_n(N)$. Since $\mathcal{K}$ is a supplement, $\mathcal{K} \subseteq (\mathcal{K}_*)^+$. Therefore $\mathcal{K} = (\mathcal{K}_*)^+$.
	\end{proof}
	
	\begin{theorem}
		The mapping $P\rightarrow P^+$ defines a one-to-one correspondence between the sets of supplement ideals of $N$ and of $M_n(N)$.
	\end{theorem}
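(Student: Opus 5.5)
The argument mirrors the one for complements, with Theorems~\ref{thm:supplementinN}, \ref{thm:I^+supplement in M} and \ref{thm:K=K_*^+} in the roles played there by Theorems~\ref{thm:complement in M}, \ref{thm:complement in N} and \ref{thm:K=K_*^*}. We show that $P \mapsto P^+$ sends supplements of $N$ to supplements of $M_n(N)$, that it is injective, and that every supplement of $M_n(N)$ arises this way. The inverse map will be $\mathcal{K} \mapsto \mathcal{K}_*$.

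First, well-definedness. If $P$ is a supplement ideal of $N$, say of $H$, then Theorem~\ref{thm:I^+supplement in M} shows that $P^+$ is a supplement of $H^+$ in $M_n(N)$. So $P^+$ is a supplement ideal of $M_n(N)$.

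Second, injectivity. If $P^+ = Q^+$ for supplement ideals $P, Q$ of $N$, apply $(~)_*$ to both sides. Since $(P^+)_* = P$ by the cited proposition of Meldrum and Meyer (which sharpens \ref{it:GC1}), we get $P = (P^+)_* = (Q^+)_* = Q$. Thus $(~)^+$ is injective on all of $\mathfrak L(N)$, and in particular on supplements.

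Third, surjectivity, which is the only step requiring care. Let $\mathcal{K}$ be a supplement in $M_n(N)$, say of $\mathcal{H}$. By Theorem~\ref{thm:supplementinN}, $\mathcal{K}_*$ is a supplement of $\mathcal{H}_*$ in $N$, and by Theorem~\ref{thm:K=K_*^+}, $\mathcal{K} = (\mathcal{K}_*)^+$. So $\mathcal{K}$ is the image of the supplement ideal $\mathcal{K}_*$.

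The main point to check is that the proof of Theorem~\ref{thm:K=K_*^+} only uses the inclusion $(\mathcal{K}_*)^+ \subseteq \mathcal{K}$, which is \ref{it:GC2}, together with the fact that $(\mathcal{K}_*)^+$ still supplements $\mathcal{H}$. Minimality of $\mathcal{K}$ then forces equality. Hence the two assignments $P \mapsto P^+$ and $\mathcal{K} \mapsto \mathcal{K}_*$ are mutually inverse bijections between the sets of supplement ideals.
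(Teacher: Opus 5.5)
Your proposal is correct and follows the paper's proof, which simply cites Theorems \ref{thm:I^+supplement in M}, \ref{thm:supplementinN} and \ref{thm:K=K_*^+} for well-definedness and surjectivity, exactly as you do. Your one addition is to prove injectivity explicitly via $(P^+)_* = P$; the paper leaves this implicit in its earlier remark that $(~)^+$ is an injection.
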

	
	\begin{proof}
		Follows from Theorems~\ref{thm:I^+supplement in M}, \ref{thm:supplementinN} and \ref{thm:K=K_*^+}.
	\end{proof}
	
	\begin{theorem}
		If $I$ is a direct summand in $N$, then $I^* = I^+$ in $M_n(N)$.
	\end{theorem}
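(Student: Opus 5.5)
Let $J$ be an ideal of $N$ with $I + J = N$ and $I \cap J = \{0\}$. The strategy is to show that $I^*$ is itself a supplement of some ideal in $M_n(N)$; Theorem~\ref{thm:K=K_*^+} will then give $I^* = ((I^*)_*)^+ = I^+$, because $(I^*)_* = I$.

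First, by Lemma~\ref{lem:sumupperstar}, $I + J = N$ gives $I^* + J^* = M_n(N)$. Next, by \ref{it:GC9}, $I^* \cap J^* = (I \cap J)^* = \{0\}^*$. The set $\{0\}^*$ consists of the matrices $A$ with $A\rho = 0$ for all $\rho \in N^n$, which is the zero ideal. Hence $M_n(N) = I^* \oplus J^*$, so $I^*$ is a direct summand of $M_n(N)$.

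By Lemma~\ref{lem:DirectImpliesComplementAndSupplement}, applied in the nearring $M_n(N)$, the ideal $I^*$ is then a supplement ideal, namely of $J^*$. Theorem~\ref{thm:K=K_*^+} yields $I^* = ((I^*)_*)^+$. Since $(I^*)_* = I$ by the proposition of \cite{meldrum1996intermediate}, this gives $I^* = I^+$.

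One point needs care: Lemma~\ref{lem:DirectImpliesComplementAndSupplement} uses only modularity of the ideal lattice, and $\mathfrak L(M_n(N))$ is modular, so the lemma applies to $M_n(N)$ without change. The main thing to verify is the identification $\{0\}^* = \{0\}$, and that is immediate from the definition of $(~)^*$.

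As a cross-check, the dual route via complements also works. Since $I^*$ is a direct summand, it is a complement, and Theorem~\ref{thm:complement in M} confirms that $I^*$ is a complement of $J^*$. That route only recovers $I^* = ((I^*)_*)^*$, which is trivial, so the supplement route is the one that gives the result.
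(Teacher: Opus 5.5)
Your proof is correct and matches the paper's argument. Both show that $I^* \oplus J^* = M_n(N)$ using Lemma~\ref{lem:sumupperstar} and \ref{it:GC9}, then deduce from Lemma~\ref{lem:DirectImpliesComplementAndSupplement} that $I^*$ is a supplement, and finish with Theorem~\ref{thm:K=K_*^+}. You also make explicit the facts $\{0\}^* = \{0\}$ and $(I^*)_* = I$, which the paper uses tacitly, and you correctly note that only the supplement half of the lemma is needed.
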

	
	\begin{proof}
		Suppose that $I$ is a direct summand in $N$. Then there exists an ideal $J$ of $N$ such that $I + J = N$ and $I \cap J = \{0\}$. Now by Lemma~\ref{lem:sumupperstar}, $I^* + J^* = M_n(N)$, and by~\ref{it:GC9}, $I^* \cap J^* = \{0\}$. This implies that $I^*$ is a direct summand of $M_n(N)$. Now by Lemma~\ref{lem:DirectImpliesComplementAndSupplement}, $I^*$ is both supplement and complement in $M_n(N)$. Therefore by Theorem~\ref{thm:K=K_*^+}, $I^* = I^+$. 
	\end{proof}
	
	\begin{corollary}
		If $N = I \oplus J$, then $M_n(N) = I^* \oplus J^*$.
	\end{corollary}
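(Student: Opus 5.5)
The corollary should follow almost immediately from the proof of the preceding theorem. What has to be shown is that $M_n(N)$ is the internal direct sum of $I^*$ and $J^*$, i.e.\ that $I^* + J^* = M_n(N)$ and $I^* \cap J^* = \{0\}$. These are exactly the two facts established inside that proof, and I will simply isolate them.

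For the sum, $N = I \oplus J$ gives in particular $I + J = N$, so Lemma~\ref{lem:sumupperstar} yields $I^* + J^* = M_n(N)$.

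For the intersection, $I \cap J = \{0\}$. Property \ref{it:GC9} (right adjoints preserve meets) then gives $I^* \cap J^* = (I \cap J)^* = \{0\}^*$. It remains to check that $\{0\}^* = \{0\}$. This holds because $\{0\}^*$ consists of the maps $A \in M_n(N)$ with $A\rho \in \{0\}^n$ for all $\rho \in N^n$, and such an $A$ is the zero map. Alternatively, $(\{0\}^*)_* = \{0\}$, and every $A \in \{0\}^*$ has all coordinate images equal to zero.

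Ideals of a nearring whose sum is the whole nearring and whose intersection is trivial form an internal direct decomposition. The normality and ideal conditions are automatic, since $I^*$ and $J^*$ are ideals of $M_n(N)$. Hence $M_n(N) = I^* \oplus J^*$.

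As a remark, the preceding theorem also gives $I^* = I^+$ and $J^* = J^+$, so the decomposition can equally be written $M_n(N) = I^+ \oplus J^+$. There is no real obstacle here. The only point requiring a moment's care is the identification $\{0\}^* = \{0\}$, which is immediate from the definition of $(~)^*$.
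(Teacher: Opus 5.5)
Your proof is correct and matches the paper's argument: the corollary is read off from the proof of the preceding theorem, which gets $I^* + J^* = M_n(N)$ from Lemma~\ref{lem:sumupperstar} and $I^* \cap J^* = \{0\}$ from \ref{it:GC9}. Your explicit check that $\{0\}^* = \{0\}$ fills in a step the paper leaves implicit.
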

	
	\section{Complement and Supplement Ideal Graphs}\label{sec:Graphs}
	
	In this section, we define the complement ideal graph of the nearring $N$, denoted by $\CIG(N)$ and the supplement ideal graph of $N$, denoted by $\SIG(N)$.  
	
	Throughout this section the graphs considered are simple and undirected. For vertices $u$ and $v$ of a graph, we write $u \sim v$ to mean that $u$ and $v$ are adjacent, and $u \nsim v$ to mean that they are non-adjacent. A vertex in $G$ is called a pendant vertex if it is adjacent to a unique vertex. The \emph{girth} of a graph $G$ is the length of a smallest cycle contained in  $G$, and is denoted by $\gr(G)$. A maximal complete subgraph of $G$ is a \emph{clique} of $G$, and a clique of largest order is a \emph{maximum clique}. The \emph{clique number} of $G$ is the order of a maximum clique of $G$, and is denoted by $\omega(G)$. We denote the complete graph on $n$ vertices as $K_n$. 
	
	\begin{definition}
		The \emph{complement ideal graph} of a nearring $N$ is the graph whose vertices are ideals of $N$, with two distinct vertices $I$ and $J$ being adjacent if and only if $I + J$ is a complement ideal in $N$. It is denoted by $\CIG(N)$.
	\end{definition}
	
	\begin{definition}
		The \emph{supplement ideal graph} of a nearring $N$ is the graph whose vertices are ideals of $N$, with two distinct vertices $I$ and $J$ being adjacent if and only if $I \cap J$ is a supplement in $N$. It is denoted by $\SIG(N)$.
	\end{definition}
	
	In the remark given below, we make basic observations that easily follow from the definitions, but are important and will be used repeatedly in proving the main results. These observations also show that complement and supplement ideals of $N$ can be immediately identified from $\CIG(N)$ and $\SIG(N)$ respectively.
	
	\begin{remark}\label{rem:0andNinCIGSIG}
		We can observe the following:
		\begin{enumerate}[label = (\roman*)]
			\item\label{it:0andNinCIGSIG1} In $\CIG(N)$, the only ideals adjacent to $\{0\}$ are complement ideals of $N$, and $N$ is a universal vertex.
			\item\label{it:0andNinCIGSIG2} In $\SIG(N)$, the only ideals adjacent to $N$ are supplement ideals of $N$, and $\{0\}$ is a universal vertex.
		\end{enumerate}
	\end{remark}
	
	It is evident from Remark~\ref{rem:0andNinCIGSIG} \ref{it:0andNinCIGSIG2} that if $\SIG(N)$ is a complete graph, then every ideal is a supplement. Conversely, if every ideal is a supplement, then the intersection of any two ideals is a supplement, and hence $\SIG(N)$ is complete. Thus, we have the following result.
	
	\begin{corollary}\label{cor:complete}
		$\SIG(N)$ is a complete graph if and only if every non-trivial ideal of $N$ is a supplement.
	\end{corollary}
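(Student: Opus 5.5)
Both directions are short consequences of Remark~\ref{rem:0andNinCIGSIG}~\ref{it:0andNinCIGSIG2} together with the definition of adjacency in $\SIG(N)$; the only care needed is with the trivial ideal.

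For the forward direction, assume $\SIG(N)$ is complete and let $I$ be a non-trivial ideal of $N$. If $I = N$, then $N$ is a supplement of $\{0\}$: we have $N + \{0\} = N$, and $N$ is the only ideal $K$ with $K + \{0\} = N$, so it is trivially minimal. If $I \neq N$, then $I$ and $N$ are distinct vertices, so completeness gives $I \sim N$. By Remark~\ref{rem:0andNinCIGSIG}~\ref{it:0andNinCIGSIG2}, this means $I = I \cap N$ is a supplement.

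For the converse, assume every non-trivial ideal is a supplement, and take distinct vertices $I$ and $J$. We must show that $I \cap J$ is a supplement.
\begin{enumerate}[label = (\alph*)]
\item If $I \cap J \neq \{0\}$, this holds by hypothesis.
\item If $I \cap J = \{0\}$, we need $\{0\}$ itself to be a supplement. It is: $\{0\}$ is a supplement of $N$, since $\{0\} + N = N$ and no ideal is properly contained in $\{0\}$, so minimality holds vacuously.
\end{enumerate}
This also agrees with Remark~\ref{rem:0andNinCIGSIG}~\ref{it:0andNinCIGSIG2}, which says $\{0\}$ is a universal vertex. Hence every pair of distinct vertices is adjacent, and $\SIG(N)$ is complete.

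The main point to watch is case (b), where the intersection is trivial even though the hypothesis only concerns non-trivial ideals. Verifying explicitly that both $\{0\}$ and $N$ are always supplements removes this gap. No results beyond the definitions and Remark~\ref{rem:0andNinCIGSIG} are needed.
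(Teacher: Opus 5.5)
Your proof is correct and follows the same route as the paper. The forward direction uses adjacency to $N$ via Remark~\ref{rem:0andNinCIGSIG}, and the converse observes that every intersection is then a supplement. Your explicit check that $\{0\}$ and $N$ are always supplements fills in the small step the paper leaves implicit when it moves between ``every non-trivial ideal'' and ``every ideal''.
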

	
	\begin{lemma}\label{lem:minimalisuniversal}
		Any minimal ideal of $N$ is a universal vertex in $\SIG(N)$ if and only if it is a supplement.
	\end{lemma}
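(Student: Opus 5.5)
The plan is to prove the two implications separately, using only the definition of $\SIG(N)$ and Remark~\ref{rem:0andNinCIGSIG}. Let $I$ be a minimal ideal of $N$. Recall that a minimal ideal is a non-trivial ideal containing no non-zero ideal properly. Consequently, for every ideal $J$ of $N$ the intersection $I \cap J$ is an ideal contained in $I$, and so it is either $\{0\}$ or $I$ itself.

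For the forward direction, suppose $I$ is a universal vertex in $\SIG(N)$. Then $I$ is adjacent to every other vertex, and in particular to $N$. Here we should note that $I \neq N$: the degenerate case in which $N$ is itself a minimal ideal, i.e.\ $N$ is simple, must be treated separately. In that case $N$ is trivially a supplement of $\{0\}$, since no proper ideal $K \subseteq N$ satisfies $K + \{0\} = N$. If $I \neq N$, adjacency gives that $I \cap N = I$ is a supplement, which is exactly what Remark~\ref{rem:0andNinCIGSIG}\ref{it:0andNinCIGSIG2} says.

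For the converse, suppose $I$ is a supplement, and let $J \neq I$ be any ideal. By the dichotomy above, $I \cap J$ is either $I$ or $\{0\}$. In the first case $I \cap J = I$ is a supplement by hypothesis. In the second case we need $\{0\}$ to be a supplement. This holds because $\{0\}$ is a supplement of $N$: indeed $\{0\} + N = N$, and $\{0\}$ is trivially minimal with this property. This is the same fact underlying the statement in Remark~\ref{rem:0andNinCIGSIG}\ref{it:0andNinCIGSIG2} that $\{0\}$ is universal. In either case $I \sim J$, so $I$ is universal.

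No step is a real obstacle. The only point needing care is the convention that $\{0\}$ counts as a supplement, which the paper already uses in Remark~\ref{rem:0andNinCIGSIG}, together with the degenerate case $I = N$ in the forward direction.
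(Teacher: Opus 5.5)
Your proof is correct and follows essentially the same route as the paper. For supplement $\Rightarrow$ universal, your dichotomy $I\cap J\in\{\{0\},I\}$ is the paper's split into the cases $I\subseteq J$ and $I\nsubseteq J$, and the direction the paper calls straightforward is exactly your adjacency-to-$N$ argument from Remark~\ref{rem:0andNinCIGSIG}; your separate handling of $I=N$ is just extra care.
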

	
	\begin{proof}
		Let $I$ be a minimal ideal which is a supplement and $J$ be any ideal in $N$. Now we have two cases:
		
		\noindent	Case 1: $I \subseteq J$. Then $I \cap J = I$. Therefore $I \sim J$.
		
		\noindent	Case 2: $I \nsubseteq J$. Then, since $I$ is minimal, $I \cap J = \{0\}$, which is a supplement. Therefore $I \sim J$.
		
		\noindent	Thus $I$ is a universal vertex in $\SIG(N)$. The converse is straightforward.
	\end{proof}

	\begin{remark}
		In Lemma~\ref{lem:minimalisuniversal}, the condition of being minimal is not necessary for an ideal to be a universal vertex in $\SIG(N)$. 	
	\end{remark}
	
	\begin{example}
		Let $N = N_1 \times N_2$, where $N_1 = (\mathbb Z_3, +, \star)$ and $N_2 = (\mathbb Z_{12}, +, \star)$ (\texttt{LibraryNearRing(3/1, 1)} and \texttt{LibraryNearRing(12/1, 1)} in \cite{SONATA}). The ideal lattice and the graph $\SIG(N)$ are given in Figure~\ref{fig:Z3xZ12}.
		\begin{figure}[!h]
			\centering
			\subfloat[$\mathfrak L(N)$]{
				\includegraphics[width=0.33\textwidth]{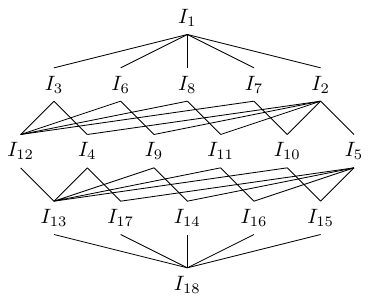}
			}
			\hspace{0.3\textwidth}
			\subfloat[$\SIG(N)$]{
				\includegraphics[width=0.25\textwidth]{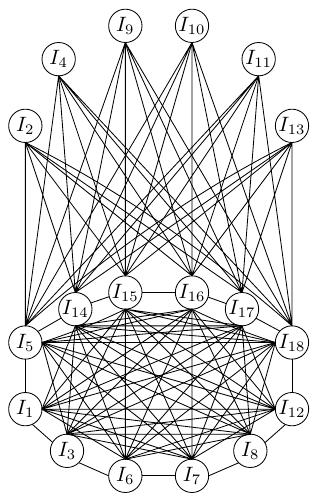}
			}
			\caption{}\label{fig:Z3xZ12}
		\end{figure}		
		Observe that $I_5$ is a universal vertex of $\SIG(N)$, but is not a minimal ideal of $N$.
	\end{example}
	
	\begin{remark}\label{rem:nbdofm}
		If $\mathfrak{m}$ is a minimal ideal which is not a supplement in $N$, then the neighborhood of $\mathfrak{m}$ in $\SIG(N)$ consists of the ideals of $N$ not containing $\mathfrak{m}$. 
	\end{remark}
	
	\begin{lemma}\label{lem:uniqueuniversal}
		Let $N$ be a nearring with DCCI. Then no minimal ideal of $N$ is a supplement ideal if and only if $\SIG(N)$ has a unique universal vertex.
	\end{lemma}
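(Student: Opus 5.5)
The plan is to exploit the fact, already visible in Remark~\ref{rem:0andNinCIGSIG}~\ref{it:0andNinCIGSIG2}, that $\{0\}$ is always a universal vertex of $\SIG(N)$. Indeed, $I \cap \{0\} = \{0\}$ for every ideal $I$, and $\{0\}$ is a supplement: it is trivially minimal with $\{0\} + N = N$. So the statement amounts to showing that the absence of supplement minimal ideals is equivalent to $\{0\}$ being the \emph{only} universal vertex. Throughout I assume $N \neq \{0\}$, so that minimal ideals exist under DCCI.

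($\Leftarrow$) I argue by contraposition. Suppose some minimal ideal $\mathfrak m$ of $N$ is a supplement. By Lemma~\ref{lem:minimalisuniversal}, $\mathfrak m$ is a universal vertex of $\SIG(N)$. Since $\mathfrak m \neq \{0\}$, this gives a second universal vertex besides $\{0\}$. Hence the universal vertex is not unique.

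($\Rightarrow$) Assume no minimal ideal is a supplement, and let $J \neq \{0\}$ be a universal vertex; I will derive a contradiction.

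1. First I rule out $J$ being minimal. If $J \neq N$, then $J \sim N$ gives that $J \cap N = J$ is a supplement, so $J$ is not minimal. If $J = N$, then $N$ is a supplement of $\{0\}$: any ideal $K$ with $K + \{0\} = N$ must equal $N$. So again $J$ is not minimal.

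2. Because $N$ has DCCI and $J \neq \{0\}$, $J$ contains a minimal ideal $\mathfrak m$. By the previous step, $\mathfrak m \neq J$.

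3. Since $J$ is universal, $J \sim \mathfrak m$. Hence $J \cap \mathfrak m = \mathfrak m$ is a supplement, which contradicts the hypothesis. Therefore $\{0\}$ is the unique universal vertex.

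The only delicate points are the edge cases: $J = N$, and $J$ itself being minimal. These are exactly what forces the detour through the fact that a universal vertex is adjacent to $N$ (or is $N$) and is therefore a supplement. The DCCI hypothesis is used only to produce the minimal ideal $\mathfrak m$ inside $J$.
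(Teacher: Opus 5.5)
Your proof is correct and follows essentially the same route as the paper: for a non-trivial vertex $J$ you split on whether $J$ is itself minimal or properly contains a minimal ideal $\mathfrak m$ supplied by DCCI. The first case is ruled out because a universal vertex must be a supplement, which is the converse half of Lemma~\ref{lem:minimalisuniversal}; the second is ruled out because $J \cap \mathfrak m = \mathfrak m$ is not a supplement, which is Remark~\ref{rem:nbdofm}. Your reverse direction via Lemma~\ref{lem:minimalisuniversal} is exactly the paper's, and your explicit handling of the case $J = N$ and check that $\{0\}$ is always universal fill in details the paper leaves implicit.
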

	
	\begin{proof}
		Let $I$ be any non-trivial ideal of $N$. Since $N$ has DCCI, there exists a minimal ideal $\mathfrak{m}$ such that $\mathfrak{m}\subseteq I$. If $I = \mathfrak m$, then by Lemma~\ref{lem:minimalisuniversal}, it is not universal. Otherwise, by Remark~\ref{rem:nbdofm}, $I \nsim \mathfrak m$. The converse follows from Lemma~\ref{lem:minimalisuniversal}.
	\end{proof}
	
	\begin{corollary}
		Let $N$ be an indecomposable nearring with DCCI. Then $\SIG(N)$ has a unique universal vertex.
	\end{corollary}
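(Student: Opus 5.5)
The corollary should follow by combining Lemma~\ref{lem:indecomposable} with Lemma~\ref{lem:uniqueuniversal}. Since $N$ has DCCI, Lemma~\ref{lem:uniqueuniversal} says that $\SIG(N)$ has a unique universal vertex if and only if no minimal ideal of $N$ is a supplement ideal. So it is enough to show that, when $N$ is indecomposable, no minimal ideal of $N$ is a supplement.

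That is exactly the second half of Lemma~\ref{lem:indecomposable}. For completeness, here is the dual argument in the form I would write out.

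\begin{itemize}
\item Suppose a minimal ideal $\mathfrak m$ is a supplement of some ideal $K$. Then $\mathfrak m + K = N$, and no proper sub-ideal of $\mathfrak m$ has this property.
\item $\{0\}+K\neq N$, because otherwise $\{0\}$ would be a smaller ideal with the property, contradicting minimality of the supplement. Hence $K\neq N$.
\item Since $\mathfrak m$ is minimal, $\mathfrak m\cap K$ is either $\{0\}$ or $\mathfrak m$.
\item If $\mathfrak m\cap K=\mathfrak m$, then $\mathfrak m\subseteq K$, so $K=\mathfrak m+K=N$. This contradicts $K\neq N$.
\item Therefore $\mathfrak m\cap K=\{0\}$ and $\mathfrak m+K=N$, with $\mathfrak m\neq N$ and $K\neq N$. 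So $N=\mathfrak m\oplus K$, contradicting indecomposability.
\end{itemize}

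One point needs checking: $\mathfrak m\neq N$. A minimal ideal that equals $N$ would mean $N$ is simple. In that case the only ideals are $\{0\}$ and $N$. The ideal $N$ is not a supplement, since $\{0\}+N=N$ shows a smaller ideal already works. The ideal $\{0\}$ is not a minimal (non-zero) ideal. So the conclusion still holds in this case.

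The only subtlety, then, is the degenerate case just handled, together with the convention that minimal ideals are non-zero. Otherwise the statement is an immediate combination of the two cited lemmas.
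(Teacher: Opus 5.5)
Your route is the paper's: combine Lemma~\ref{lem:indecomposable} with Lemma~\ref{lem:uniqueuniversal}. Your bulleted argument correctly shows that a \emph{proper} minimal ideal of an indecomposable nearring cannot be a supplement.

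The error is in your degenerate case. The sentence ``$N$ is not a supplement, since $\{0\}+N=N$ shows a smaller ideal already works'' only shows that $N$ is not a supplement \emph{of $N$}. But $N$ is a supplement of $\{0\}$. The only ideal $I$ with $\{0\}+I=N$ is $I=N$, so $N$ is trivially minimal for that property. The paper uses exactly this convention: Remark~\ref{rem:0andNinCIGSIG} and Lemma~\ref{lem:0andN} treat $\{0\}$ and $N$ as supplements. So when $N$ is simple, its minimal ideal $N$ \emph{is} a supplement, and your claim fails.

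No better argument can repair this case, because the statement itself is false there. Take $N$ simple, e.g.\ the field $\mathbb Z_p$.
\begin{enumerate}[label=(\alph*)]
\item $N$ is indecomposable and trivially has DCCI.
\item $\SIG(N)$ has exactly the two vertices $\{0\}$ and $N$.
\item They are adjacent, since $\{0\}\cap N=\{0\}$ is a supplement of $N$.
\end{enumerate}
Hence $\SIG(N)\cong K_2$, and both vertices are universal. The paper records this itself as the $P_2$ case in its theorem on $|N|=pq$.

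The paper's one-line proof has the same blind spot. Its proof of Lemma~\ref{lem:indecomposable} does not notice that, for simple $N$, the decomposition it produces is $N=N\oplus\{0\}$, which is not a sum of two proper ideals.

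The correct corollary needs the extra hypothesis that $N$ is not simple, i.e.\ $N$ has a proper non-zero ideal. Then every minimal ideal is proper, your bullets show none is a supplement, and Lemma~\ref{lem:uniqueuniversal} gives $\{0\}$ as the unique universal vertex. So instead of ``handling'' the degenerate case, you should have reported it as a counterexample and added that hypothesis.
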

	
	\begin{proof}
		By Lemma~\ref{lem:indecomposable} no minimal ideal of $N$ is a supplement. Thus the result follows from Lemma~\ref{lem:uniqueuniversal}.
	\end{proof}
	
	The \emph{socle} of $N$, denoted by $\operatorname{soc}(N)$, is the sum of all minimal ideals of $N$, and the \emph{radical} of $N$, denoted by $\operatorname{Rad}(N)$, is the intersection of all maximal ideals of $N$.
	
	\begin{theorem}\label{thm:pendantCIG}
		If an ideal $I$ is a pendant vertex in $\CIG(N)$, then $\operatorname{soc}(N) \subseteq I \subseteq \operatorname{Rad}(N)$. 
	\end{theorem}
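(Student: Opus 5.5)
The plan is to use that $N$ is a universal vertex of $\CIG(N)$ (Remark~\ref{rem:0andNinCIGSIG}\ref{it:0andNinCIGSIG1}). So if $I \neq N$ is pendant, its \emph{only} neighbour is $N$. Concretely, for every ideal $J \notin \{I, N\}$, the sum $I + J$ is not a complement ideal. Both inclusions will follow by producing a forbidden second neighbour whenever an inclusion fails. Throughout I assume $I$ is a proper non-trivial ideal, which the argument needs in two places.

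\emph{Step 1: $I \subseteq \operatorname{Rad}(N)$.} Let $M$ be any maximal ideal of $N$, and suppose $I \nsubseteq M$. Then $I + M$ properly contains $M$, so $I + M = N$. Now $N$ is a complement, namely of $\{0\}$. Hence $I \sim M$. Since $I \nsubseteq M$, we have $M \neq I$, and $M \neq N$ because $M$ is proper. This gives $I$ a second neighbour, a contradiction. So $I$ lies in every maximal ideal, and therefore in their intersection $\operatorname{Rad}(N)$.

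\emph{Step 2: $\operatorname{soc}(N) \subseteq I$.} Since $\operatorname{soc}(N)$ is the sum of the minimal ideals, it suffices to show that every minimal ideal $\mathfrak m$ lies in $I$. Suppose $\mathfrak m \nsubseteq I$. By minimality, $I \cap \mathfrak m = \{0\}$. Let $\mathcal P$ be the set of ideals $K \supseteq I$ with $K \cap \mathfrak m = \{0\}$. Then $\mathcal P$ is nonempty, and the union of a chain in $\mathcal P$ is again an ideal meeting $\mathfrak m$ trivially. So Zorn's lemma gives a maximal element $C$ of $\mathcal P$. This $C$ is maximal among all ideals meeting $\mathfrak m$ trivially, since any larger such ideal would still contain $I$. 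Hence $C$ is a complement of $\mathfrak m$, and $C \neq N$ because $\mathfrak m \neq \{0\}$. There are two cases.
\begin{enumerate}[label=(\alph*)]
\item If $C \neq I$, then $I + C = C$ is a complement, so $I \sim C$ with $C \notin \{I, N\}$, a contradiction.
\item If $C = I$, then $I$ itself is a complement. Hence $I + \{0\} = I$ is a complement and $I \sim \{0\}$. Since $I \neq \{0\}$ and $\{0\} \neq N$, this is again a second neighbour, a contradiction.
\end{enumerate}
Thus every minimal ideal lies in $I$, and $\operatorname{soc}(N) \subseteq I$.

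The main obstacle is case (b) and the degenerate situations around it, where the proof must use $I \neq \{0\}$ and $I \neq N$. For instance, if $N$ is simple, $\CIG(N)$ is the single edge $\{0\} \sim N$, so both vertices are pendant, yet $\operatorname{soc}(N) = N \nsubseteq \{0\}$ and $N \nsubseteq \operatorname{Rad}(N) = \{0\}$. So I would state explicitly that the theorem concerns pendant vertices $I$ with $\{0\} \neq I \neq N$. I would then check that the Zorn argument for the existence of $C$ needs only that ideals are closed under unions of chains, so no chain condition is required.
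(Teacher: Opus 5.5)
Your proof is correct and follows essentially the same route as the paper: a maximal ideal $M \not\supseteq I$ yields the extra neighbour $M$ via $I+M=N$, and a Zorn-type enlargement of $I$ among ideals meeting $\mathfrak m$ trivially yields an extra complement neighbour. Your case (b), where $I$ is itself a complement of $\mathfrak m$ and so $I\sim\{0\}$, is a case the paper's proof passes over, as is your explicit hypothesis $\{0\}\neq I\neq N$; your simple-nearring example correctly shows that this hypothesis is genuinely needed.
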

	\begin{proof}
		Suppose that $I$ is a pendant vertex. Let $M$ be a maximal ideal of $N$. Since $I$ is a pendant vertex, $I + M \neq N$. Therefore $I \subseteq M$. Hence every maximal ideal contains $I$. Suppose that $\mathfrak{m}$ is a minimal ideal of $N$ such that $\mathfrak{m}\nsubseteq I$. Then $I \cap \mathfrak{m} = \{0\}$. If $I$ is not maximal with respect to this property, then consider a maximal chain $\{I_\alpha\}$ of ideals containing $I$ and $I_\alpha\cap \mathfrak{m} = \{0\}$. Then there exists $K = \bigcup I_\alpha$ such that $K \cap \mathfrak{m} = \{0\}$. That implies $K$ is a complement ideal. Thus we get $I$ is adjacent to $K$, which is a contradiction. Therefore, $\mathfrak{m}\subseteq I$. Hence every minimal ideal is contained in $I$.    
	\end{proof}
	
	As arbitrary intersection is not distributive over sum, in case of $\SIG(N)$, Theorem~\ref{thm:pendantCIG} will be true when we consider $N$ with DCCI. The statement changes as follows:
	
	\begin{theorem}\label{thm:pendantSIG}
		If an ideal $I$ is a pendant vertex in $\SIG(N)$ and $N$ has DCCI, then $\operatorname{soc}(N) \subseteq I \subseteq \operatorname{Rad}(N)$. 
	\end{theorem}
	
	If $N$ has ACCI and DCCI, then the converses of Theorems~\ref{thm:pendantSIG} and \ref{thm:pendantCIG} can also be obtained.
	
	\begin{theorem}\label{thm:pendantvertex}
		Let $N$ be a nearring with ACCI and DCCI. Then a proper non-trivial ideal $I$ of $N$ is a pendant vertex in $\SIG(N)$ ($\CIG(N)$) if and only if  $\operatorname{soc}(N) \subseteq I \subseteq \operatorname{Rad}(N)$.
	\end{theorem}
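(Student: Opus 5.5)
The forward implications are Theorems~\ref{thm:pendantCIG} and \ref{thm:pendantSIG}, so only the converse needs proof. Assume $\{0\}\neq I\neq N$ and $\operatorname{soc}(N)\subseteq I\subseteq\operatorname{Rad}(N)$. By Remark~\ref{rem:0andNinCIGSIG}, in $\SIG(N)$ the vertex $\{0\}$ is universal, and in $\CIG(N)$ the vertex $N$ is universal. So $I$ already has one neighbour in each graph. The plan is to show it has no other neighbour. Throughout, ACCI supplies maximal ideals above proper ideals, and DCCI supplies minimal ideals below non-zero ideals.

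\emph{Two auxiliary observations.} First, a non-zero ideal $K\subseteq\operatorname{Rad}(N)$ is never a supplement. Suppose $K$ is minimal with $K+H=N$. If $H=N$, minimality forces $K=\{0\}$. If $H\neq N$, then by ACCI $H$ lies in some maximal ideal $M$. Since $K\subseteq\operatorname{Rad}(N)\subseteq M$, we get $K+H\subseteq M\neq N$, a contradiction.

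Second, and dually, a proper ideal $K\supseteq\operatorname{soc}(N)$ is never a complement. Suppose $K$ is maximal with $K\cap H=\{0\}$. If $H\neq\{0\}$, then by DCCI $H$ contains a minimal ideal $\mathfrak m$. But $\mathfrak m\subseteq\operatorname{soc}(N)\subseteq K$, so $K\cap H\neq\{0\}$, which is impossible. Hence $H=\{0\}$, and maximality gives $K=N$.

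\emph{$\SIG(N)$.} Let $J\neq\{0\}$ with $J\neq I$. By DCCI, $J$ contains a minimal ideal $\mathfrak m\subseteq\operatorname{soc}(N)\subseteq I$. Hence $I\cap J\supseteq\mathfrak m$ is non-zero. Also $I\cap J\subseteq I\subseteq\operatorname{Rad}(N)$. By the first observation, $I\cap J$ is not a supplement, so $I\nsim J$. This covers $J=N$ as well, where $I\cap J=I$. Therefore $\{0\}$ is the only neighbour of $I$, and $I$ is pendant.

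\emph{$\CIG(N)$.} Let $J\neq N$ with $J\neq I$. By ACCI, $J$ lies in some maximal ideal $M$. Also $I\subseteq\operatorname{Rad}(N)\subseteq M$, so $I+J\subseteq M$ is proper. Moreover $I+J\supseteq\operatorname{soc}(N)$. By the second observation, $I+J$ is not a complement, so $I\nsim J$. This includes $J=\{0\}$, where $I+J=I$. Therefore $N$ is the only neighbour of $I$.

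The only delicate point is the bookkeeping of the boundary cases $J=N$ and $J=\{0\}$, and checking that the hypotheses $I\neq\{0\}$ and $I\neq N$ really give $I\sim\{0\}$ and $I\sim N$ as distinct vertices. The main step is the pair of auxiliary observations, where ACCI and DCCI are each used exactly once.
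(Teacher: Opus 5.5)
Your proposal is correct and follows essentially the same route as the paper. The paper also takes the forward direction from Theorems~\ref{thm:pendantCIG} and \ref{thm:pendantSIG}. For the converse it argues that a non-trivial ideal contained in every maximal ideal cannot be a supplement, so $I\cap J$ is never a supplement for non-trivial $J$, and it then says that duality handles $\CIG(N)$. Your version differs only in being more explicit: you show $I\cap J\neq\{0\}$ via a minimal ideal of $J$ lying in $\operatorname{soc}(N)\subseteq I$, you point out where ACCI and DCCI are used, and you write out the $\CIG(N)$ case in full.
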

	\begin{proof}
		Let $I$ be a proper non-trivial ideal of $N$ such that $\mathfrak{m} \subseteq I \subseteq M$, for every maximal ideal $M$ and every minimal ideal $\mathfrak{m}$. Since $I$ is contained in every maximal ideal, no non-trivial ideal contained in $I$ can be a supplement. 
		Thus, if $J$ is any non-trivial ideal of $N$, then $I \cap J$ is a non-trivial ideal of $N$ contained in $I$, and hence, is not a supplement. Thus $I$ is not adjacent to $J$. Therefore, $I$ is a pendant vertex in $\SIG(N)$. By duality, the same argument holds for $\CIG(N)$.
	\end{proof}
	
	\begin{proposition}
		If the ideals of $N$ form a chain, then $\SIG(N)$ is a star graph.
	\end{proposition}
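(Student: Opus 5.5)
If the ideals of $N$ form a chain, then for any two ideals $I,J$ we have $I+J=\max(I,J)$. The plan is therefore to show that $N$ has no proper non-trivial supplement. Once that is known, the adjacency pattern in $\SIG(N)$ follows directly and forces a star centred at $\{0\}$.

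\emph{Step 1: the only supplements are $\{0\}$ and $N$.} Let $K$ be a supplement of some ideal $H$, so $K+H=N$ and $K$ is minimal with this property. Since $K$ and $H$ are comparable, $K+H$ equals the larger of the two, and this must be $N$.
\begin{itemize}
\item If $K=N$, then $K$ is not a proper supplement.
\item If $H=N$, then already $\{0\}+H=N$. Minimality of $K$ then gives $K=\{0\}$.
\end{itemize}
Hence every supplement is $\{0\}$ or $N$. This argument uses no chain conditions, so I would not route it through Lemma~\ref{lem:0andN}. That lemma would need a greatest proper ideal, which a chain need not possess.

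\emph{Step 2: read off the adjacencies.} Two distinct ideals $I,J$ are adjacent exactly when $I\cap J$ is a supplement. Because they are comparable, $I\cap J=\min(I,J)$. By Step 1, this is a supplement only if $\min(I,J)=\{0\}$ or $\min(I,J)=N$.
\begin{itemize}
\item $\min(I,J)=N$ would force $I=J=N$, which is excluded since the vertices are distinct.
\item So $I\sim J$ if and only if one of them is $\{0\}$.
\end{itemize}
This matches Remark~\ref{rem:0andNinCIGSIG}\ref{it:0andNinCIGSIG2}, where $\{0\}$ is a universal vertex. There are no other edges, so $\SIG(N)$ is a star $K_{1,m}$ with centre $\{0\}$, where $m$ is the number of non-zero ideals.

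\emph{Main obstacle.} The one delicate point is how the minimality in the definition of supplement is read. It has to be taken over all ideals, including $\{0\}$, which is what forces $K=\{0\}$ when $H=N$. Under that reading, $N$ is a supplement of $\{0\}$, so $N$ is adjacent to $\{0\}$, consistent with the star. There is also a degenerate case: if $N$ has only the ideals $\{0\}$ and $N$, the graph is $K_2$, which is still a star.
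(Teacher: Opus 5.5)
Your proof is correct and follows the same route as the paper: comparability of ideals rules out any proper non-trivial supplement, and then $I\cap J=\min(I,J)$ forces every edge to meet $\{0\}$, giving a star centred at $\{0\}$. You simply supply the details the paper's two-line proof leaves implicit, and you rightly avoid relying on a greatest proper ideal, which a chain of ideals need not have.
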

	\begin{proof}
		Since ideals of $N$ form a chain, no proper non-trivial ideal is a supplement. Therefore, $\SIG(N)$ is a star graph. 
	\end{proof}
	
	\begin{remark}
		In particular, for nearrings defined on the following groups, $\SIG(N)$ is a star graph.
		\begin{enumerate}[label=(\roman*)]
			\item The cyclic $p$-group $(\mathbb{Z}_{p^n}, +)$
			\item The semidirect product $\mathbb{Z}_p\rtimes H$, where $H$ is a finite simple group whose order is relatively prime to $p$
			\item The symmetric group $S_n$
		\end{enumerate}
		where $p$ is a prime and $n$ is any positive integer.
	\end{remark}	
	
	\begin{theorem}
		Let $|N| = pq$, where $p$ and $q$ are distinct primes. Then $\SIG(N)$ is either $K_4$ or $P_3$ or $P_2$.
	\end{theorem}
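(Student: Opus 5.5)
Since every ideal of $N$ is a normal subgroup of $(N,+)$, and a group of order $pq$ has very few normal subgroups, the plan is to list the possible ideal lattices and compute $\SIG(N)$ for each. Take $p<q$. By Sylow's theorems, $(N,+)$ is either cyclic $\mathbb Z_{pq}$ or the nonabelian group $\mathbb Z_q\rtimes\mathbb Z_p$ (the latter only when $p\mid q-1$). In the cyclic case the normal subgroups are $\{0\}$, $P$ of order $p$, $Q$ of order $q$, and $N$. In the nonabelian case they are $\{0\}$, $Q$, and $N$, since the Sylow $p$-subgroups are not normal. So $\mathfrak L(N)$ is a sublattice containing $\{0\}$ and $N$ and contained in one of these lists.

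Next, by Lagrange, the ideals of order $p$ and $q$ satisfy $P\cap Q=\{0\}$ and $P+Q=N$. There are four cases.

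\textbf{Case 1: $\mathfrak L(N)=\{\{0\},N\}$.} The graph $\SIG(N)$ has two vertices. By Remark~\ref{rem:0andNinCIGSIG}, $\{0\}$ is universal, so $\SIG(N)=P_2$.

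\textbf{Case 2: exactly one proper non-trivial ideal $I$.} The ideals form a chain, so by the preceding Proposition $\SIG(N)$ is a star. Concretely, $I\cap N=I$ is not a supplement, since $I+J=N$ forces $J=N$ and $\{0\}+N=N$ with $\{0\}\subsetneq I$. Also $\{0\}$ is adjacent to both $I$ and $N$. So $\SIG(N)=P_3$.

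\textbf{Case 3: both $P$ and $Q$ are ideals.} Then $N=P\oplus Q$, so by Lemma~\ref{lem:DirectImpliesComplementAndSupplement} both $P$ and $Q$ are supplements. Since $\{0\}$ and $N$ are trivially supplements (of $N$ and of $\{0\}$ respectively), every ideal is a supplement. By Corollary~\ref{cor:complete}, or by checking the six intersections directly, $\SIG(N)=K_4$.

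The main obstacle, though mild, is Case 3. One must confirm that $P\cap Q=\{0\}$ and $P+Q=N$ in the ideal lattice, which holds because $P+Q$ is an ideal whose order is divisible by both $p$ and $q$. One must also handle the adjacencies involving $\{0\}$ and $N$, whose intersections are $\{0\}$ or the other ideal, both supplements. Beyond this, the proof needs the group-theoretic classification and the observation that the nonabelian case can only produce Cases 1 or 2.
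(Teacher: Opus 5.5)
Your proof is correct and is essentially the paper's argument. Like the paper, you list the candidate ideals ($\{0\}$, $N$, and normal subgroups of orders $p$ and $q$) and split into the cases yielding $P_2$, $P_3$ and $K_4$, while also supplying details the paper leaves implicit: the classification of groups of order $pq$, and the appeal to Lemma~\ref{lem:DirectImpliesComplementAndSupplement} to see that $P$ and $Q$ are supplements when $N=P\oplus Q$. (The only blemish is cosmetic: you announce four cases but list three, since your Case 2 covers both $I=P$ and $I=Q$.)
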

	
	\begin{proof}
		The possible ideals of $N$ are $\{0\}$, $N$, a subgroup generated by an element of order $p$, say $\langle a \rangle$, and a subgroup generated by an element of order $q$, say $\langle b\rangle$. If $\{0\}$, $N$, $\langle a \rangle$ and $\langle b \rangle$ are the ideals of $N$, then every non-trivial ideal is a supplement in $N$. Therefore, by Corollary~\ref{cor:complete}, we have $\SIG(N) \cong K_4$. If the ideals are $\{0\}$, $N$, and exactly one of $\langle a \rangle$ and $\langle b \rangle$, then $\SIG(N) \cong P_3$. If the ideals are only $\{0\}$ and $N$, then $\SIG(N) \cong P_2$.
	\end{proof}
	
	\begin{lemma}\label{lem:SupplementAdjacency}
		Let $I$ be an ideal of $N$, and $J$ a supplement ideal of $I$. Then $I$ is adjacent to $J$ in $\SIG(N)$ if and only if $I \cap J = \{0\}$.
	\end{lemma}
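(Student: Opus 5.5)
The plan is to prove the two directions separately, using Lemma~\ref{lem:IcapJissuperfluous} and Remark~\ref{rem:SuperfluousSupplement}. The ``if'' direction is immediate. Suppose $I \cap J = \{0\}$. The zero ideal is trivially a supplement: it is minimal with respect to $\{0\} + N = N$, so it is a supplement of $N$. Remark~\ref{rem:0andNinCIGSIG}\ref{it:0andNinCIGSIG2} records exactly this, namely that $\{0\}$ is a universal vertex. Hence $I \cap J$ is a supplement, and by the definition of $\SIG(N)$ the vertices $I$ and $J$ are adjacent. We tacitly need $I \neq J$ for the vertices to be distinct. If $I = J$, then $J$ being a supplement of $I$ gives $I = N$, and then minimality forces $J=\{0\}$; so $I=J=\{0\}=N$, which is excluded for a non-trivial nearring.

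For the ``only if'' direction, assume $I \sim J$ in $\SIG(N)$, so $I \cap J$ is a supplement ideal of $N$. Since $J$ is a supplement of $I$, Lemma~\ref{lem:IcapJissuperfluous} says that $I \cap J$ is a superfluous ideal of $N$. The ideal $I\cap J$ is therefore simultaneously superfluous and a supplement. By Remark~\ref{rem:SuperfluousSupplement}, the only such ideal is $\{0\}$, so $I \cap J = \{0\}$.

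The only point that needs care is Remark~\ref{rem:SuperfluousSupplement} itself, which the paper states without proof; I would include a one-line justification. Suppose $L$ is superfluous and is a supplement of some ideal $H$. Then $L + H = N$, and superfluity gives $H = N$ (otherwise $L+H$ would be proper). Minimality of $L$ among ideals with $L + N = N$ then forces $L = \{0\}$, since $\{0\} \subseteq L$ already satisfies $\{0\}+N=N$. With this in place both directions are routine, and I expect no genuine obstacle beyond checking the degenerate case $I = J$ noted above.
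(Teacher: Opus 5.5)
Your proposal is correct and matches the paper's proof: the forward direction combines Lemma~\ref{lem:IcapJissuperfluous} (so $I \cap J$ is superfluous) with Remark~\ref{rem:SuperfluousSupplement}, and the converse is immediate because $\{0\}$ is a supplement. Your justification of the remark (superfluity forces $H = N$, then minimality forces $L = \{0\}$) and your check of the degenerate case $I = J$ are correct details that the paper leaves implicit.
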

	\begin{proof}
		Suppose that $J$ is a supplement ideal of $I$, and $I$ is adjacent to $J$ in $\SIG(N)$. Then by Lemma~\ref{lem:IcapJissuperfluous}, $I \cap J$ must be a superfluous ideal. Thus, by Remark~\ref{rem:SuperfluousSupplement}, $I \cap J = \{0\}$. The converse is obvious.
	\end{proof}
	
	\begin{theorem}\label{thm:uniquemaximalandminimal}
		Let $N$ have ACCI and DCCI. Then $N$ has a unique maximal ideal and a unique minimal ideal if and only if $\SIG(N)$ is a star graph.
	\end{theorem}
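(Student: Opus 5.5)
The plan is to prove both directions by reducing adjacency in $\SIG(N)$ to the condition $I \cap J = \{0\}$. Throughout, a star graph is read as one centred at the universal vertex $\{0\}$ (Remark~\ref{rem:0andNinCIGSIG}~\ref{it:0andNinCIGSIG2}), with every other vertex adjacent only to $\{0\}$.

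\emph{Forward direction.} Assume $N$ has a unique maximal ideal and a unique minimal ideal $\mathfrak m$. Since $N$ has ACCI, Lemma~\ref{lem:0andN}(ii) shows that $\{0\}$ and $N$ are the only supplement ideals of $N$. So two distinct vertices $I, J$ are adjacent if and only if $I \cap J \in \{\{0\}, N\}$. The case $I \cap J = N$ would force $I = J = N$, which is excluded, so adjacency means exactly $I \cap J = \{0\}$. By DCCI every non-trivial ideal contains a minimal ideal, and that minimal ideal must be $\mathfrak m$. Hence any two non-trivial ideals $I, J$ satisfy $\mathfrak m \subseteq I \cap J$ and are not adjacent. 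Every edge therefore contains $\{0\}$, and since $\{0\}$ is universal, $\SIG(N)$ is a star centred at $\{0\}$.

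\emph{Converse.} Assume $\SIG(N)$ is a star, and first dispose of the degenerate case where $N$ has only the ideals $\{0\}$ and $N$, which is trivial. Otherwise, since $\{0\}$ is universal and the graph has a non-trivial ideal other than $N$, the centre is $\{0\}$. The argument then has two steps.

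\begin{enumerate}[label=(\alph*)]
\item \emph{Unique maximal ideal.} By Remark~\ref{rem:0andNinCIGSIG}~\ref{it:0andNinCIGSIG2}, every supplement ideal other than $N$ is adjacent to $N$. In a star centred at $\{0\}$, the vertex $N$ is adjacent only to $\{0\}$, so there is no proper non-trivial supplement. Lemma~\ref{lem:UniqueMaximalIdeal}, which uses both ACCI and DCCI, then gives a unique maximal ideal.
\item \emph{Unique minimal ideal.} DCCI guarantees that a minimal ideal exists. If $\mathfrak m_1 \neq \mathfrak m_2$ were two minimal ideals, then $\mathfrak m_1 \cap \mathfrak m_2 = \{0\}$, which is a supplement. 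So $\mathfrak m_1 \sim \mathfrak m_2$ would be an edge between two non-central vertices, contradicting the star structure.
\end{enumerate}

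The main obstacle is bookkeeping rather than mathematics: making precise which vertex is the centre and handling the small cases. These are the case where the only ideals are $\{0\}$ and $N$, where $\SIG(N) \cong K_2$ and either vertex could be called the centre, and the case $N = \{0\}$. I would treat these separately at the start, noting that the statement holds vacuously or trivially there, and then run the arguments above with $\{0\}$ as the centre.
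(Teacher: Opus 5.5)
Your proof is correct and follows the paper's argument. For the star $\Rightarrow$ uniqueness direction, you rule out proper non-trivial supplements via adjacency to $N$, invoke Lemma~\ref{lem:UniqueMaximalIdeal}, and exclude two distinct minimal ideals because they would be adjacent. For the other direction, you use that only $\{0\}$ and $N$ are supplements (citing Lemma~\ref{lem:0andN}(ii) directly, which is what the paper's appeal to Lemma~\ref{lem:UniqueMaximalIdeal} reduces to), together with DCCI forcing any two non-trivial ideals to share the unique minimal ideal; your explicit identification of $\{0\}$ as the centre and your treatment of the degenerate cases are extra care the paper omits, not a different method.
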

	
	\begin{proof}
		Suppose that $\SIG(N)$ is a star graph. Then for any non-trivial proper ideal $I$  of $N$, $I\cap N = I$ is not a supplement. Thus $\{0\}$ and $N$ are the only supplements in $N$. Now, by Lemma~\ref{lem:UniqueMaximalIdeal}, $N$ has a unique maximal ideal. If $N$ has two minimal ideals say, $\mathfrak{m_1}$ and $\mathfrak{m_2}$, then $\mathfrak{m_1}$ is adjacent to $\mathfrak{m_2}$, which is a contradiction. Therefore, $N$ has unique minimal ideal. Conversely suppose that $N$ has unique maximal and minimal ideals. Then by Lemma~\ref{lem:UniqueMaximalIdeal}, $\{0\}$ and $N$ are the only supplements in $N$. Since $\{0\}$ is a universal vertex, the star graph with $\{0\}$ as the central vertex and all non-trivial ideals as the pendant vertices is a subgraph of $\SIG(N)$. Suppose that $I$ and $J$ are two distinct non-trivial ideals of $N$ such that $I$ is adjacent to $J$. Then $I \cap J = K$ is a supplement in $N$. Since $\{0\}$ and $N$ are the only supplements, $K = \{0\}$. But since $N$ has DCCI and a unique minimal ideal, $K \neq \{0\}$, which is a contradiction. Therefore, $\SIG(N)$ is a star graph.
	\end{proof}
	
	\begin{lemma}\label{lem:star}
		For any nearring $N$, either $\SIG(N)$ is a star graph or $\gr(\SIG(N)) = 3$.
	\end{lemma}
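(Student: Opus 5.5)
The plan rests on Remark~\ref{rem:0andNinCIGSIG}\ref{it:0andNinCIGSIG2}: the vertex $\{0\}$ is universal in $\SIG(N)$. I will show that whenever $\SIG(N)$ is not a star centred at $\{0\}$, there is an edge $I \sim J$ between two distinct non-zero ideals $I$ and $J$. Then $\{0\}, I, J$ form a triangle, and since a simple graph has no cycles of length less than $3$, $\gr(\SIG(N)) = 3$.

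I split into two cases according to whether $\SIG(N)$ has an edge not incident to $\{0\}$.

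\emph{Case 1: no such edge.} Every edge of $\SIG(N)$ contains $\{0\}$. Because $\{0\}$ is universal, $\SIG(N)$ is then exactly the star $K_{1,m}$ centred at $\{0\}$, where $m$ is the number of non-zero ideals. This includes degenerate situations. If $N$ is the trivial nearring, the graph is a single vertex. If $\{0\}$ and $N$ are the only ideals, the graph is $K_2$. I will treat both as (degenerate) stars, following the paper's convention in Theorem~\ref{thm:uniquemaximalandminimal}, where a star means $\{0\}$ joined to all other ideals as pendant vertices.

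\emph{Case 2: there is an edge $I \sim J$ with $I, J$ both non-zero.} Then $I$ and $J$ are distinct by simplicity of the graph. The ideal $\{0\}$ is adjacent to both, since $\{0\} \cap I = \{0\}$ is trivially a supplement, being minimal with $N + \{0\} = N$. So $\{0\}, I, J$ is a $3$-cycle and $\gr(\SIG(N)) = 3$.

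The only point needing care is the claim that $\{0\}$ is a supplement, which makes it universal; this is immediate from the definition, taking $K = N$. The main (mild) obstacle is therefore interpretive: the statement must be read with "star graph" including the trivial cases. With no edges between non-zero ideals, the girth is undefined or infinite rather than $3$, so those cases must fall under the "star" alternative. No structural hypotheses such as ACCI or DCCI are needed, which matches the phrase "for any nearring $N$".
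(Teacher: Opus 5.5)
Your proof is correct and is essentially the paper's argument: $\{0\}$ is a supplement (of $N$), so it is a universal vertex of $\SIG(N)$. Hence any edge $I \sim J$ between non-zero ideals closes the triangle $\{0\}, I, J$, and otherwise $\SIG(N)$ is the star centred at $\{0\}$. Your explicit handling of the degenerate cases ($N$ trivial, or only $\{0\}$ and $N$ as ideals) is a harmless refinement that the paper leaves implicit.
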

	
	\begin{proof}
		Suppose that $\SIG(N)$ is not a star graph. Since $\{0\}$ is a universal vertex of $\SIG(N)$, this implies that there exist non-trivial ideals $I$ and $J$ such that $ I\sim J$. Therefore $\gr(\SIG(N)) = 3$.
	\end{proof}
	
	\begin{lemma}
		Let $N$ have DCCI and contain a unique minimal ideal. Then, $\{0\}$ and $N$ are the only supplements in $N$ if and only if $\SIG(N)$ is triangle free.
	\end{lemma}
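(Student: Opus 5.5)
The plan is to prove both directions straight from the definition of adjacency in $\SIG(N)$, together with Remark~\ref{rem:0andNinCIGSIG}\ref{it:0andNinCIGSIG2}, which says that $\{0\}$ is a universal vertex and that $N$ is adjacent exactly to the supplement ideals. The hypotheses (DCCI and a unique minimal ideal) are needed only for the forward direction.

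\emph{Forward direction.} Assume $\{0\}$ and $N$ are the only supplements in $N$. Let $\mathfrak m$ be the unique minimal ideal.
\begin{enumerate}[label=(\alph*)]
\item By DCCI, every non-trivial ideal $I$ contains some minimal ideal. By uniqueness that minimal ideal is $\mathfrak m$, so $\mathfrak m \subseteq I$ for every non-trivial ideal $I$.
\item Take two distinct non-trivial ideals $I$ and $J$. Then $\{0\} \neq \mathfrak m \subseteq I \cap J$, so $I \cap J \neq \{0\}$.
\item Also $I \cap J \neq N$, because $I \cap J = N$ would force $I = J = N$, contradicting $I \neq J$.
\item Hence $I \cap J$ is not a supplement, and $I \nsim J$.
\end{enumerate}
So every edge of $\SIG(N)$ has $\{0\}$ as an endpoint, and $\SIG(N)$ is a star centred at $\{0\}$. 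A star graph contains no triangle. This is essentially the argument in the converse part of Theorem~\ref{thm:uniquemaximalandminimal}, but here we do not need ACCI, since we assume directly that $\{0\}$ and $N$ are the only supplements.

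\emph{Converse direction.} Assume $\SIG(N)$ is triangle free, and suppose for contradiction that $K$ is a proper non-trivial supplement ideal. Then $\{0\}$, $K$ and $N$ are three distinct vertices, and they form a triangle:
\begin{itemize}
\item $K \sim N$, since $K \cap N = K$ is a supplement.
\item $\{0\} \sim K$ and $\{0\} \sim N$, since $\{0\}$ is universal.
\end{itemize}
This contradicts triangle-freeness, so the only supplements are $\{0\}$ and $N$. Both are indeed supplements: $\{0\}$ is a supplement of $N$, and $N$ is a supplement of $\{0\}$.

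No step is a real obstacle. The only points needing care are the use of DCCI to put $\mathfrak m$ inside every non-trivial ideal, and checking that the three vertices in the converse are distinct, which holds because $K$ is proper and non-trivial.
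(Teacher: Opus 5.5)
Your proof is correct and follows essentially the same route as the paper. In the forward direction, both arguments rest on the fact that, by DCCI and uniqueness of the minimal ideal, no two non-trivial ideals meet trivially; the paper runs this as a contradiction from an assumed triangle, while you show directly that every edge contains $\{0\}$, which is slightly cleaner. In the converse, using the universal vertex $\{0\}$ together with $N$ to turn a proper non-trivial supplement into a triangle is exactly what the paper's appeal to Lemma~\ref{lem:star} amounts to.
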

	
	\begin{proof}
		Suppose that $\{0\}$ and $N$ are the only supplements in $N$ and $\SIG(N)$ has a triangle. Then there exist ideals $I$, $J$ and $K$ such that $I \sim J \sim K \sim I$ in $\SIG(N)$. Since $N$ has DCCI and contains a unique minimal ideal, no two non-trivial ideals can have trivial intersection. Therefore one of the ideals should be $\{0\}$ and one of the ideals should be $N$. Let $J = \{0\}$ and $K = N$. Since $I \sim N$, this implies $I$ is a supplement, which is a contradiction. Therefore $\SIG(N)$ is triangle free. The converse follows from Lemma~\ref{lem:star}. 
	\end{proof}
	
	\begin{lemma}\label{lem:K4issubgraph}
		Let $N$ have ACCI. If $\SIG(N)$ has $K_4$, then $N$ has a proper non-trivial ideal that is either a supplement or a complement.
	\end{lemma}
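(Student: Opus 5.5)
We argue by contradiction: we assume $\SIG(N)$ contains a $K_4$ but every proper non-trivial ideal of $N$ is neither a supplement nor a complement. Let the four mutually adjacent vertices be $I_1, I_2, I_3, I_4$. Since the only supplements are then $\{0\}$ and $N$, each pairwise intersection $I_a \cap I_b$ ($a \neq b$) must equal $\{0\}$ or $N$. An intersection equal to $N$ forces $I_a = I_b = N$, which is impossible for distinct vertices. Hence every pairwise intersection is $\{0\}$.

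Next we reduce to non-trivial proper ideals. At most one of the four vertices is $\{0\}$ and at most one is $N$. If $N$ is among them, any other vertex $I_a$ adjacent to $N$ satisfies $I_a \cap N = I_a = \{0\}$, so at most one other vertex could be adjacent to $N$. This contradicts the fact that $N$ has three neighbours in the $K_4$. So $N$ is not among the four, and at least three of them, say $I_1, I_2, I_3$, are proper non-trivial ideals with pairwise trivial intersections.

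Now we use ACCI to produce a complement. Consider the ideals $K$ containing $I_1$ with $K \cap I_2 = \{0\}$. This family is non-empty, since it contains $I_1$. By ACCI (equivalently, by Zorn's lemma, since a union of a chain of such ideals still meets $I_2$ trivially) it has a maximal element $K$. This $K$ is a complement of $I_2$, and it is non-trivial because $I_1 \subseteq K$. If $K$ is proper, we have found a proper non-trivial complement, contradicting our assumption. Otherwise $K = N$, and then $I_2 = N \cap I_2 = \{0\}$, contradicting the non-triviality of $I_2$. Either way we obtain a proper non-trivial complement. This proves the lemma, and in fact gives the stronger conclusion that a complement exists.

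The main obstacle is the bookkeeping in the reduction step: we must rule out $N$ as a vertex and make sure two genuinely non-trivial proper ideals with zero intersection remain. The maximality argument via ACCI is routine. If $N$ is permitted to appear only alongside $\{0\}$, the counting above still leaves at least two non-trivial proper vertices, which is all the final step needs.
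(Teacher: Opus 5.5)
Your proof is correct and is essentially the paper's argument. The neighbours of $N$ in $\SIG(N)$ are supplements; otherwise adjacency forces $I\cap J=\{0\}$, and a maximal ideal containing $I$ that meets $J$ trivially is a proper non-trivial complement of $J$. Two features of your version are improvements:
\begin{itemize}
\item The paper splits on whether $N$ lies in some $K_4$ and then asserts, without justification, that this rules out proper non-trivial supplements. You split instead on whether a proper non-trivial supplement exists, which avoids that gap.
\item Your Zorn remark correctly shows that ACCI is not needed.
\end{itemize}

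Drop the claimed `stronger conclusion', however. Your complement is produced only under the assumption that no proper non-trivial supplement exists, so the argument re-proves the lemma and nothing more. A $K_4$ alone does not force a proper non-trivial complement. For example, take the zero nearring on $\mathbb{Z}_{31}\rtimes\mathbb{Z}_{31}^{\times}$. Its ideals are its normal subgroups, namely $\{0\}$ and $\mathbb{Z}_{31}\rtimes H$ for $H\le\mathbb{Z}_{31}^{\times}\cong\mathbb{Z}_{30}$. Every non-zero ideal contains $\mathbb{Z}_{31}$, so the only complements are $\{0\}$ and $N$. Yet $\{0\}$, $N$, $\mathbb{Z}_{31}\rtimes\mathbb{Z}_6$ and $\mathbb{Z}_{31}\rtimes\mathbb{Z}_{10}$ form a $K_4$ in $\SIG(N)$.
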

	
	\begin{proof}
		Suppose that $\SIG(N)$ has $K_4$. Then we have two cases. 
		
		Case 1: $N$ is a vertex in a $K_4$. Then there exist proper non-trivial ideals $I$ and $J$ such that $I \sim N$ and $J \sim N$. This implies that $I$ and $J$ are supplements.
		
		Case 2: $N$ is not a vertex in any $K_4$ in $\SIG(N)$. Then there exist proper non-trivial ideals $I$, $J$ and $K$ such that $\{0\}$, $I$, $J$ and $K$ form $K_4$. Since $N$ is not in any $K_4$, no proper non-trivial ideal is a supplement. Also, since $I \sim J$, $I\cap J = \{0\}$. Since $N$ has ACCI, there exists a proper non-trivial ideal $L$ such that $I \subseteq L$ and is maximal with respect to the property $L \cap J =\{0\}$. Hence $L$ is a complement ideal of $J$.
	\end{proof}
	
	\begin{lemma}
		If $N$ has a proper non-trivial supplement ideal, then $\omega(\SIG(N)) \ge 3$.  
	\end{lemma}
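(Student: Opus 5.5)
Let $I$ be a proper non-trivial supplement ideal of $N$. The plan is to exhibit a triangle in $\SIG(N)$ on the three vertices $\{0\}$, $I$ and $N$. These are three distinct vertices because $I$ is neither $\{0\}$ nor $N$.

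The adjacencies come from Remark~\ref{rem:0andNinCIGSIG}\ref{it:0andNinCIGSIG2}, which says that $\{0\}$ is a universal vertex of $\SIG(N)$. So $\{0\} \sim I$ and $\{0\} \sim N$. The remaining edge is $I \sim N$. This holds because $I \cap N = I$, and $I$ is a supplement by hypothesis; this is again the content of Remark~\ref{rem:0andNinCIGSIG}\ref{it:0andNinCIGSIG2}.

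For completeness, the universality of $\{0\}$ itself follows from the fact that $\{0\}$ is a supplement in every nearring. Indeed, $\{0\}$ is a supplement of $N$: we have $\{0\} + N = N$, and $\{0\}$ is trivially minimal with this property. Since $\{0\} \cap J = \{0\}$ for every ideal $J$, $\{0\}$ is adjacent to every other vertex.

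These three pairwise adjacent vertices form a complete subgraph $K_3$ of $\SIG(N)$. Any clique containing it, in particular any maximal one, has order at least $3$, so $\omega(\SIG(N)) \ge 3$. There is no real obstacle here; the only care needed is confirming that $\{0\}$ counts as a supplement (as checked above), so that the edges incident to $\{0\}$ are genuinely present.
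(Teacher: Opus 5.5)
Your proof is correct: the triangle on $\{0\}$, $I$ and $N$ is exactly the intended argument. Here $\{0\}$ is universal because it is a supplement of $N$, and $I \sim N$ because $I \cap N = I$. The paper states this lemma without proof, leaving it as an immediate consequence of Remark~\ref{rem:0andNinCIGSIG}, which is precisely what you use, with the small verification that $\{0\}$ is a supplement made explicit.
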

	
	The \emph{tensor product} of graphs $G$ and $H$ is the graph $G \otimes H$ with vertex set $V(G) \times V(H)$ in which any two vertices $(u_1, v_1)$ and $(u_2, v_2)$ are adjacent if and only if $u_1 \sim u_2$ in $G$ and $v_1 \sim v_2$ in $H$.
	
	For the next result we consider the graph $\SIG(N)$ with loops, denoted by $\SIG^\circ(N)$, which is the same as $\SIG(N)$, but with a loop on precisely each supplement ideal of $N$.

	\begin{theorem}
		If $N$ is a nearring with right identity, and $N = N_1 \oplus N_2$ is a direct decomposition of $N$, then $\SIG^\circ(N) = \SIG^\circ(N_1) \otimes \SIG^\circ(N_2)$.
	\end{theorem}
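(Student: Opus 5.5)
We identify vertex sets first. Since $N$ has a right identity, the remark before Lemma~\ref{lem:supplementsindirectsum} shows that every ideal of $N$ is uniquely of the form $I_1 \oplus I_2$ with $I_\alpha \unlhd N_\alpha$. The map $I_1 \oplus I_2 \mapsto (I_1, I_2)$ is therefore a bijection from the vertex set of $\SIG^\circ(N)$ onto $V(\SIG^\circ(N_1)) \times V(\SIG^\circ(N_2))$. It remains to show that this bijection preserves adjacency, loops included.

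Take two vertices $I = I_1 \oplus I_2$ and $J = J_1 \oplus J_2$, possibly equal. Intersection is componentwise: $I \cap J = (I_1 \cap J_1) \oplus (I_2 \cap J_2)$. This holds because an element $(a,b)$ lies in both sums exactly when $a \in I_1 \cap J_1$ and $b \in I_2 \cap J_2$. In $\SIG^\circ$, two vertices (distinct or equal) are adjacent exactly when their intersection is a supplement. For $I = J$ this is the loop condition, since $I \cap I = I$. So the whole claim reduces to one statement:
\[
K_1 \oplus K_2 \text{ is a supplement in } N \iff K_\alpha \text{ is a supplement in } N_\alpha \text{ for } \alpha = 1,2.
\]
Given this, $I \sim J$ in $\SIG^\circ(N)$ holds iff $I_1 \cap J_1$ and $I_2 \cap J_2$ are both supplements. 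That is exactly $I_1 \sim J_1$ in $\SIG^\circ(N_1)$ and $I_2 \sim J_2$ in $\SIG^\circ(N_2)$, which is adjacency in the tensor product.

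The displayed equivalence is the content of Lemma~\ref{lem:supplementsindirectsum}, and both of its directions are needed here. The proof there shows that if $I_\alpha$ supplements $J_\alpha$ in $N_\alpha$, then $I_1 \oplus I_2$ supplements $J_1 \oplus J_2$ in $N$. Conversely, if $I_1 \oplus I_2$ supplements $J_1 \oplus J_2$, then each $I_\alpha$ supplements $J_\alpha$. Since every ideal of $N$, including the one witnessing that $K$ is a supplement, splits as a direct sum, the witness ideals also decompose. Both directions therefore apply.

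The main obstacle is checking that these arguments are airtight, mainly the converse direction. For minimality of $I_1$, take $K_1 \subseteq I_1$ with $K_1 + J_1 = N_1$, and pair it with $K_2 = I_2$. Then $(K_1 \oplus I_2) + (J_1 \oplus J_2) = N$, so minimality of $I_1 \oplus I_2$ forces $K_1 = I_1$. The forward direction also uses that any $K \subseteq I_1 \oplus I_2$ decomposes as $K_1 \oplus K_2$ with $K_\alpha \subseteq I_\alpha$; this is where the right identity is used. A last point to verify is the convention that $\{0\}$ counts as a supplement: it supplements $N$, so it carries a loop, and this is consistent in both factors.
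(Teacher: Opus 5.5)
Your proposal is correct and follows the same route as the paper: identify the ideals of $N$ with pairs $(I_1, I_2)$, use $I \cap J = (I_1\cap J_1)\oplus(I_2\cap J_2)$, and apply both directions of Lemma~\ref{lem:supplementsindirectsum} to match adjacency in $\SIG^\circ(N)$, loops included, with adjacency in the tensor product. You also make explicit two points the paper leaves implicit: the case $I_\alpha = J_\alpha$, which is handled through loops, and the choice $K_2 = I_2$ when checking minimality of $I_1$ in the converse direction of that lemma.
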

	
	\begin{proof}
		Let $I = I_1 \oplus I_2$ and $J = J_1 \oplus J_2$ be any two ideals of $N$. Then $I \sim J$ in $\SIG^\circ(N)$ if and only if $I \cap J$ is a supplement in $N$. From Lemma~\ref{lem:supplementsindirectsum}, we know that $I \cap J = (I_1 \cap J_1) \oplus (I_2 \cap J_2)$ is a supplement if and only if $I_\alpha \cap J_\alpha$ is a supplement in $N_\alpha$, for $\alpha = 1, 2$. Equivalently, $I_\alpha \sim J_\alpha$ in $\SIG^\circ(N_\alpha)$, for $\alpha = 1, 2$. It follows that $\SIG^\circ(N) = \SIG^\circ(N_1) \otimes \SIG^\circ(N_2)$.
	\end{proof}
	
	\begin{theorem}
		The following are equivalent:
		\begin{enumerate}[label=(\roman*)]
			\item \label{it:CompleteGraph1} Every ideal of $N$ is a direct summand.
			\item \label{it:CompleteGraph2} $\SIG(N)$ is a complete graph.
			\item \label{it:CompleteGraph3} $\CIG(N)$ is a complete graph.
		\end{enumerate}
	\end{theorem}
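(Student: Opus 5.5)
We will prove \ref{it:CompleteGraph1}$\Leftrightarrow$\ref{it:CompleteGraph2} and \ref{it:CompleteGraph1}$\Leftrightarrow$\ref{it:CompleteGraph3}. In each case the graph side reduces, via Remark~\ref{rem:0andNinCIGSIG}, to the statement that every ideal is a supplement (respectively, a complement). The work is then to pass between ``every ideal is a supplement/complement'' and ``every ideal is a direct summand'' in the modular lattice $\mathfrak L(N)$.

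\textbf{\ref{it:CompleteGraph1}$\Rightarrow$\ref{it:CompleteGraph2} and \ref{it:CompleteGraph1}$\Rightarrow$\ref{it:CompleteGraph3}.} If every ideal is a direct summand, then Lemma~\ref{lem:DirectImpliesComplementAndSupplement} shows that every ideal is both a complement and a supplement. In particular, for any two distinct ideals $I,J$, the meet $I\cap J$ is a supplement and the join $I+J$ is a complement. Hence $\SIG(N)$ and $\CIG(N)$ are both complete (compare Corollary~\ref{cor:complete}).

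\textbf{\ref{it:CompleteGraph2}$\Rightarrow$\ref{it:CompleteGraph1}.} Suppose $\SIG(N)$ is complete. By Remark~\ref{rem:0andNinCIGSIG}\ref{it:0andNinCIGSIG2}, every ideal $I\neq N$ is adjacent to $N$, so every ideal is a supplement. Fix an ideal $I$. Being a supplement, $I$ is a supplement of some ideal $J$, so $I+J=N$. I expect the crux to be producing a $J$ with $I\cap J=\{0\}$. The plan is to use completeness a second time.
\begin{itemize}
\item[(a)] If $I\neq J$, then $I\sim J$ in $\SIG(N)$. Since $I$ is a supplement of $J$, Lemma~\ref{lem:SupplementAdjacency} (with the roles of $I$ and $J$ swapped) gives $I\cap J=\{0\}$. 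Together with $I+J=N$, this makes $I$ a direct summand.
\item[(b)] If $I=J$, then $I=N$, which is trivially a summand.
\end{itemize}
The one gap is that Lemma~\ref{lem:SupplementAdjacency} is stated for ``$J$ a supplement of $I$'' and concludes $I\cap J=\{0\}$. The same argument works symmetrically through Lemma~\ref{lem:IcapJissuperfluous}: $I\cap J$ is superfluous and is a supplement, hence $\{0\}$ by Remark~\ref{rem:SuperfluousSupplement}. So the swap is harmless.

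\textbf{\ref{it:CompleteGraph3}$\Rightarrow$\ref{it:CompleteGraph1}.} This is the dual argument. By Remark~\ref{rem:0andNinCIGSIG}\ref{it:0andNinCIGSIG1}, completeness of $\CIG(N)$ makes every ideal a complement. Let $I$ be a complement of $J$, so $I\cap J=\{0\}$. If $I\neq J$, then $I\sim J$, so $I+J$ is a complement. Here I would prove the dual of Lemma~\ref{lem:IcapJissuperfluous}, namely that $I+J$ is essential when $I$ is a complement of $J$:
\begin{itemize}
\item Suppose $K\cap(I+J)=\{0\}$.
\item By modularity, $(I+K)\cap J=\{0\}$.
\item Maximality of $I$ then forces $K\subseteq I$, so $K=\{0\}$.
\end{itemize}
I would also prove the dual of Remark~\ref{rem:SuperfluousSupplement}: an essential complement equals $N$, since it meets its partner trivially, so that partner is $\{0\}$, and maximality gives $N$. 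Hence $I+J=N$ and $I$ is a direct summand. If $I=J$, then $I=\{0\}$, which is a summand. These dual facts are presumably what the paper means by the standing duality convention, so I would state them briefly rather than re-derive them at length.
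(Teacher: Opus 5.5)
Your proposal is correct and follows the paper's proof. Completeness of $\SIG(N)$ (resp.\ $\CIG(N)$) makes every ideal a supplement (resp.\ complement) via adjacency to $N$ (resp.\ $\{0\}$), and Lemma~\ref{lem:SupplementAdjacency} (resp.\ its dual, which the paper invokes only by duality) then forces $I\cap J=\{0\}$ (resp.\ $I+J=N$). Your additions --- the degenerate case $I=J$, and explicit proofs that $I+J$ is essential when $I$ is a complement of $J$ and that an essential complement equals $N$ --- are correct and fill in details the paper leaves implicit.
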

	
	\begin{proof}
		The proofs of \ref{it:CompleteGraph1} $\implies$ \ref{it:CompleteGraph2} and \ref{it:CompleteGraph1} $\implies$ \ref{it:CompleteGraph3} are straightforward.
		
		\ref{it:CompleteGraph2} $\implies$ \ref{it:CompleteGraph1}: If $\SIG(N)$ is complete, then every ideal is a supplement. Thus, by Lemma~\ref{lem:SupplementAdjacency}, every ideal is a direct summand.
		
		\ref{it:CompleteGraph3} $\implies$ \ref{it:CompleteGraph1} follows by duality.
	\end{proof}
	
	A map $f \colon V(G_1) \to V(G_2)$ where $G_1$ and $G_2$ are graphs is a \emph{weak graph homomorphism} if whenever $x \sim y$ in $G_1$, then either $f(x) = f(y)$ or $f(x) \sim f(y)$ in $G_2$ (\cite{knauer2019algebraic}).
	
	\begin{theorem}
		For a nearring $N$ with identity, the map $(~)_*$ is a weak graph homomorphism from $\CIG(M_n(N))$ to $\CIG(N)$. 
	\end{theorem}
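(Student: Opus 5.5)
We must show: if $\mathcal I \sim \mathcal J$ in $\CIG(M_n(N))$, i.e. $\mathcal I + \mathcal J$ is a complement ideal of $M_n(N)$, then either $\mathcal I_* = \mathcal J_*$ or $\mathcal I_* + \mathcal J_*$ is a complement ideal of $N$, so that $\mathcal I_* \sim \mathcal J_*$ in $\CIG(N)$. The plan is to push the complement condition down along the surjection $(~)_*$, using that $(~)_*$ preserves joins and that complements are sent to complements.

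First, fix distinct $\mathcal I,\mathcal J$ adjacent in $\CIG(M_n(N))$. If $\mathcal I_* = \mathcal J_*$ there is nothing to prove, so assume $\mathcal I_* \neq \mathcal J_*$. Since $(~)_*$ is a left adjoint in the Galois connection $((~)_*,(~)^*)$, it preserves joins. By \ref{it:GC6} we therefore have $(\mathcal I + \mathcal J)_* = \mathcal I_* + \mathcal J_*$.

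Next, set $\mathcal K = \mathcal I + \mathcal J$, a complement of some ideal $\mathcal H$ of $M_n(N)$. By Theorem~\ref{thm:complement in N}, $\mathcal K_*$ is a complement of $\mathcal H_*$ in $N$. Combining this with the previous step, $\mathcal I_* + \mathcal J_*$ is a complement ideal of $N$. Since $\mathcal I_*$ and $\mathcal J_*$ are distinct vertices, they are adjacent in $\CIG(N)$, which is exactly the weak homomorphism condition.

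The argument is essentially formal. The only points to check are that Theorem~\ref{thm:complement in N} applies to an arbitrary complement (it does, as stated), and that a possible degenerate collapse $\mathcal I_*=\mathcal J_*$ is allowed by the definition of a weak graph homomorphism. The step needing the most care is invoking Theorem~\ref{thm:complement in N}: its proof uses $(I^*)_* = I$, which holds here since $N$ has identity, as assumed in Section~\ref{sec:MatrixNearrings}. Alternatively, one could cite Theorem~\ref{thm:K=K_*^*} to write $\mathcal K = (\mathcal K_*)^*$ and transfer maximality via Theorem~\ref{thm:complement in M} and Lemma~\ref{lem:K contained in J_*}.
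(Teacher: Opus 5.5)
Your proof is correct and follows the paper's argument. Adjacency means $\mathcal I+\mathcal J$ is a complement, Theorem~\ref{thm:complement in N} sends it to a complement in $N$, and the only case without adjacency is $\mathcal I_*=\mathcal J_*$, which a weak homomorphism allows. You also make explicit the step $(\mathcal I+\mathcal J)_*=\mathcal I_*+\mathcal J_*$ via \ref{it:GC6}, which the paper uses tacitly.
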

	
	\begin{proof}
		Suppose that $\mathcal{I} \sim \mathcal{J}$ in $\CIG(M_n(N))$. Then $\mathcal{I} + \mathcal{J}$ is a complement. Now we have either $\mathcal{I}_* = \mathcal{J}_*$ or by Theorem~\ref{thm:complement in N}, $\mathcal{I}_* + \mathcal{J}_*$ is a complement. Therefore $(~)_*$ is a weak graph homomorphism.
	\end{proof}
	
	\begin{theorem}\label{thm:maloneuniversal}
		Let $N$ be a Malone trivial nearring. Then $\CIG(N)$ and $\SIG(N)$ each have exactly one universal vertex.
	\end{theorem}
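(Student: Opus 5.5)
The plan is to identify the obvious universal vertex in each graph and then use the lattice structure of $\mathfrak L(N)$ from Theorem~\ref{lem:uniqueness} to rule out every other candidate. Throughout, $M$ denotes the unique maximal ideal of $N$ given by Theorem~\ref{lem:uniqueness}, so every proper ideal of $N$ is contained in $M$. I will assume $N$ has a proper non-trivial ideal, i.e.\ $M \neq \{0\}$. If instead $\mathfrak L(N) = \{\{0\}, N\}$ (for instance when $(N,+)$ is simple), both graphs are $K_2$ and have two universal vertices, so the statement needs this hypothesis; I would add it explicitly.

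\emph{The graph $\SIG(N)$.} By Remark~\ref{rem:0andNinCIGSIG}\ref{it:0andNinCIGSIG2}, $\{0\}$ is universal. For uniqueness, Lemma~\ref{lem:N!=I+J} (equivalently Lemma~\ref{lem:0andN}(i) with greatest proper ideal $M$) shows that the only supplements of $N$ are $\{0\}$ and $N$. Here $N$ is a supplement of $\{0\}$, since no proper ideal $I$ satisfies $I + \{0\} = N$.
\begin{itemize}
\item Let $I \neq \{0\}$ be universal and proper. Then $I \sim N$ forces $I \cap N = I$ to be a supplement, which is impossible.
\item Consider $I = N$. Then $N \sim M$ would require $N \cap M = M$ to be a supplement. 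Since $\{0\} \neq M \neq N$, this fails.
\end{itemize}
Hence $\{0\}$ is the unique universal vertex.

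\emph{The graph $\CIG(N)$.} By Remark~\ref{rem:0andNinCIGSIG}\ref{it:0andNinCIGSIG1}, $N$ is universal. The key step is to show that $M$ is not a complement. Suppose $M$ were maximal with respect to $M \cap K = \{0\}$ for some ideal $K$.
\begin{itemize}
\item If $K = N$, then $M \cap N = M \neq \{0\}$, which is impossible.
\item So $K$ is proper, hence $K \subseteq M$, and then $K = M \cap K = \{0\}$.
\item But then $N \supsetneq M$ also satisfies $N \cap \{0\} = \{0\}$, contradicting the maximality of $M$.
\end{itemize}
Now let $I \neq N$ be a universal vertex, so $I \subseteq M$.
\begin{itemize}
\item If $I \neq M$, then $I \sim M$ requires $I + M = M$ to be a complement, which we just excluded.
\item If $I = M$, then $M \sim \{0\}$ (note $M \neq \{0\}$) requires $M + \{0\} = M$ to be a complement, again excluded.
\end{itemize}
So $N$ is the unique universal vertex.

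The only delicate point I expect is the degenerate case $M = \{0\}$ noted above. Apart from that, the whole argument rests on the chain-like top of $\mathfrak L(N)$: every proper ideal lies in $M$. This property is exactly what makes $\{0\}$ and $N$ the only supplements and makes $M$ a non-complement.
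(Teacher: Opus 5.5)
Your proof is correct and takes essentially the paper's route: every proper ideal lies in $M$, $M$ is not a complement, and no proper non-trivial ideal is a supplement. Two small differences in your favour: you show directly that $M$ is not a complement, where the paper cites indecomposability and Lemma~\ref{lem:indecomposable}, and you explicitly exclude $N$ in $\SIG(N)$ and $M$ in $\CIG(N)$, which the paper's write-up does not. Your caveat is genuinely needed: if $M=\{0\}$ (e.g.\ $(N,+)\cong\mathbb{Z}_p$, or $|S|=1$), both graphs are $K_2$, and the paper's argument misses this because Lemma~\ref{lem:indecomposable} itself fails there, $\{0\}$ being a complement of $N$.
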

	\begin{proof}
		Let $I$ be any proper ideal of $N$. Since $N$ is indecomposable, by Lemma~\ref{lem:indecomposable}, the unique maximal ideal $M$ of $N$ cannot be a complement. Now, by Lemma~\ref{lem:maximalideal}, $I\subseteq M$. Therefore, by duality applied to Remark~\ref{rem:nbdofm}, $I$ is not adjacent to $M$ in $\CIG(N)$. Now by Lemma~\ref{lem:N!=I+J}, $I$ is not adjacent to $N$ in $\SIG(N)$. Therefore, $\CIG(N)$ and $\SIG(N)$ each have exactly one universal vertex. 
	\end{proof}
	
	We conclude this section by applying the theory developed here to planar and Malone trivial nearrings in general, and in particular to the case where the underlying group is a finite elementary Abelian $2$-group.
	
	\begin{corollary}\label{cor:maloneclique}
		Let $N$ be a Malone trivial nearring. If $\omega(\SIG(N)) = 2$, then $N$ has a unique minimal ideal. 
	\end{corollary}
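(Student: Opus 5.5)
We argue by contraposition: assuming $N$ has two distinct minimal ideals, we exhibit a triangle in $\SIG(N)$, so that $\omega(\SIG(N)) \ge 3$.

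First we pin down which ideals of a Malone trivial nearring are supplements. By Lemma~\ref{lem:N!=I+J}, $N$ has no proper non-trivial supplement, so the only supplement ideals are $\{0\}$ and $N$. We also fix the meaning of $\omega(\SIG(N))=2$: by Remark~\ref{rem:0andNinCIGSIG}\ref{it:0andNinCIGSIG2}, $\{0\}$ is a universal vertex, so the hypothesis says that no two other vertices adjacent to $\{0\}$ are adjacent to each other.

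Now suppose, for contradiction, that $\mathfrak m_1 \neq \mathfrak m_2$ are minimal ideals of $N$. Minimality gives $\mathfrak m_1 \cap \mathfrak m_2 = \{0\}$, and $\{0\}$ is a supplement, so $\mathfrak m_1 \sim \mathfrak m_2$ in $\SIG(N)$. Both are also adjacent to the universal vertex $\{0\}$, and they are distinct from it. Hence $\{0\}, \mathfrak m_1, \mathfrak m_2$ form a triangle, so $\omega(\SIG(N)) \ge 3$, contradicting the hypothesis. Therefore $N$ has at most one minimal ideal.

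The main obstacle is existence: we must show that $N$ has at least one minimal ideal, which the statement asserts. If $N$ is finite, or more generally has DCCI, then any non-trivial ideal contains a minimal one. If $N$ itself is the only non-trivial ideal, then $N$ is minimal. So existence follows whenever $N$ has a non-trivial ideal, and $N$ is always one since $G$ is non-trivial. In the infinite case without DCCI, I would either add finiteness implicitly, since the surrounding discussion concerns finite groups such as elementary Abelian $2$-groups, or read the conclusion as ``at most one minimal ideal''. The triangle argument above is the substantive step and needs no chain conditions.
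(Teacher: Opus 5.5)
Your argument is correct, and its substance is the paper's. The paper cites Lemma~\ref{lem:N!=I+J} and Theorem~\ref{thm:uniquemaximalandminimal}, and the step of that theorem's proof that gives a unique minimal ideal is exactly your observation. Two distinct minimal ideals meet in $\{0\}$, so they are adjacent, which is incompatible with $\SIG(N)$ being a star centred at the universal vertex $\{0\}$ (equivalently, with $\omega(\SIG(N))=2$).

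Running the triangle argument directly buys generality. Theorem~\ref{thm:uniquemaximalandminimal} assumes ACCI and DCCI, which the corollary does not state. Your argument shows, for an arbitrary nearring and with no chain condition, that $\omega(\SIG(N))=2$ forces at most one minimal ideal. You do not even need Lemma~\ref{lem:N!=I+J}: the only inputs are that $\{0\}$ is a supplement and that distinct minimal ideals intersect trivially.

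Your hesitation about existence is warranted, and it is a defect of the statement rather than of your proof. Take $G=\mathbb{Z}$ and $S=1+2\mathbb{Z}$.
\begin{itemize}
\item By Lemma~\ref{lem:MaloneIdeal}, the ideals are $\{0\}$, $N$, and $2k\mathbb{Z}$ for $k\ge 1$.
\item Any two non-zero ideals meet non-trivially, and only $\{0\}$ and $N$ are supplements. Hence $\SIG(N)$ is a star and $\omega(\SIG(N))=2$.
\item Yet the chain $2\mathbb{Z}\supsetneq 4\mathbb{Z}\supsetneq 8\mathbb{Z}\supsetneq\cdots$ shows that $N$ has no minimal ideal.
\end{itemize}
So the corollary needs DCCI (e.g.\ $N$ finite), which the paper uses only tacitly through Theorem~\ref{thm:uniquemaximalandminimal}. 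Under DCCI your proof is complete; without it, ``at most one minimal ideal'' is the correct conclusion.

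One sentence needs tidying: the claim that existence follows whenever $N$ has a non-trivial ideal holds only under the DCCI assumption of the preceding sentence.
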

	
	\begin{proof}
		The proof follows from Lemma~\ref{lem:N!=I+J} and Theorem~\ref{thm:uniquemaximalandminimal}.
	\end{proof}
	
	\begin{lemma}[{\cite[Teorema 1]{cotti1986radicali}, \cite[Theorem 6.1]{ke2003recent}}]\label{lem:greatestideal}
		Let $N$ be a planar nearring. Then there exists a greatest proper ideal $D$ in $N$, which is the sum of all proper left ideals.
	\end{lemma}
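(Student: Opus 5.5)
The statement is a cited result (Cotti Ferrero, Teorema 1; Ke, Theorem 6.1), so the plan is to reconstruct the standard argument rather than derive it from anything earlier in the paper. Recall that a planar nearring $N$ (right nearring, as in this paper) is one where the equivalence $a \equiv b \iff xa = xb$ for all $x$ has at least three classes, and every equation $xa = xb + c$ with $a \not\equiv b$ has a unique solution $x$.

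The plan has three steps:
\begin{enumerate}[label=(\arabic*)]
\item Define $D$ as the sum of all proper left ideals of $N$. Show that every proper ideal is a proper left ideal, so that $D$ contains every proper ideal.
\item Show that $D$ is itself an ideal. Since $D$ is generated by left ideals, it is a normal subgroup closed under the left-ideal condition $n(n'+d)-nn' \in D$. Closure under right multiplication $DN \subseteq D$ would follow if right multiplication maps proper left ideals into proper left ideals (or into $D$). I would check this directly: for a proper left ideal $L$ and $m \in N$, the set $Lm$, or the left ideal it generates, should remain proper.
\item Show that $D \neq N$. This is the main step, as described next.
\end{enumerate}

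Step (3) is the crux, because a sum of proper left ideals could a priori be all of $N$. I would exploit the rigidity of planar nearrings: for $a \not\equiv 0$, the map $x \mapsto xa$ is a bijection of $N$. Two facts about $N^* = \{a : a \not\equiv 0\}$ are needed.
\begin{itemize}
\item \emph{First fact.} A left ideal $L$ that contains an element $a \in N^*$ satisfies $L \supseteq N$. Indeed $L$ is invariant under the translates $n(n'+\ell)-nn'$, and together with the unique solvability of $xa = xb + c$ this generates everything.
\item \emph{Second fact.} Every proper left ideal lies inside the annihilator-type set $A = \{a : a \equiv 0\}$.
\end{itemize}
Next I would show that $A$ is a subgroup closed under the left-ideal conditions. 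This holds because $x(a+b) = xa + xb$ fails in general, but for right nearrings one uses that $(x+y)a = xa + ya$ and that the equivalence classes form a structure compatible with $+$. It then follows that $D \subseteq A$. Since $A \neq N$ (there are at least three classes), this gives $D \neq N$.

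The delicate point is verifying that $A$ is closed under addition in the non-abelian setting and that a left ideal meeting $N^*$ is all of $N$. For this I would first try the translation trick: choose $x$ with $xa = xb + c$ for arbitrary $c$ and express $c$ as a left-ideal combination. Given (1)–(3), $D$ is a proper ideal containing every proper ideal, so it is the greatest proper ideal, which is the statement.
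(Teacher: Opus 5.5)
The paper gives no proof of this lemma; it is quoted from Cotti Ferrero and Ke. Your outline therefore has to stand on its own, and its crux, step (3), fails. You need $A=\{a : a\equiv 0\}=\{a : Na=\{0\}\}$ to be a subgroup, but in general it is not, and $\equiv$ is not compatible with addition. For a counterexample, put $x\cdot 1=x$, $x\cdot 6=-x$ and $x\cdot a=0$ for $a\in\{0,2,3,4,5\}$ on $(\mathbb{Z}_7,+)$. This is a planar right nearring: the classes are $A$, $\{1\}$, $\{6\}$, and for $a\not\equiv b$ the map $x\mapsto -xb+xa$ is multiplication by one of $\pm1,\pm2$. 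Yet $2,4\in A$ while $2+4=6\notin A$. So knowing that each proper left ideal lies in $A$ says nothing about a sum $L_1+L_2$, which is exactly what must be controlled. Your first fact is true, but for a simpler reason than the one you sketch. Zero-symmetry gives $n(0+a)-n0=na\in L$, and $x\mapsto xa$ is onto for $a\not\equiv 0$. Zero-symmetry itself needs a line, since planarity only makes $x\mapsto -x0+xa$ bijective: if $y=x0$ then $y0=y=ya$, so $-y0+ya=0$ forces $y=0$.

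The missing idea is to use the left-ideal condition at an arbitrary base point, not only at $0$, and to replace $A$ by $H=\{h\in N : n+h\equiv n \text{ for all } n\in N\}\subseteq A$. Suppose $l\in L$ and $n+l\not\equiv n$ for some $n$. Then $x(n+l)-xn\in L$ for every $x$, hence $-xn+x(n+l)\in L$ by normality. By planarity (solvability of $x(n+l)=xn+c$) these elements exhaust $N$, so $L=N$. Thus every proper left ideal lies in $H$. This $H$ is clearly a subgroup, and it is proper because $H\subseteq A\neq N$; hence $D\subseteq H$ is proper. The same set also settles step (2), which you only state as an expectation. Every normal subgroup inside $H$ is a left ideal, since $x(n+h)-xn=0$. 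Moreover $a\equiv b$ implies $am\equiv bm$, and $x\mapsto xm$ is a bijective endomorphism of $(N,+)$ for $m\notin A$. Writing $n=n'm$ then gives $n+hm=(n'+h)m\equiv n'm=n$, so $Hm\subseteq H$ (trivially if $m\in A$). Consequently each $Lm$ is a normal subgroup inside $H$, i.e.\ a proper left ideal, and $DN\subseteq D$ follows by right distributivity.
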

	
	\begin{lemma}[{\cite[Theorem 6.2]{ke2003recent}}]\label{lem:idealsofplanarnr}
		Let $N$ be a planar nearring and $D$ its greatest proper ideal. Then the proper left ideals of $N$ are precisely the additive normal subgroups of $N$ contained in $D$.
	\end{lemma}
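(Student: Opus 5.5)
Since this result is quoted from \cite{ke2003recent}, we only outline how we would prove it directly. One inclusion is immediate from Lemma~\ref{lem:greatestideal}. Every proper left ideal $L$ of $N$ is one of the summands of $D$, so $L \subseteq D$. It is also by definition a normal subgroup of $(N,+)$. So all the work is in the converse: every normal subgroup $H$ of $(N,+)$ with $H \subseteq D$ should be a left ideal. That is, we must check that $n(n'+h)-nn' \in H$ for all $n,n' \in N$ and $h \in H$.

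The plan is to describe $D$ intrinsically. Put
\[
D_0=\{\, d\in N \mid n(n'+d)=nn' \text{ for all } n,n'\in N \,\}.
\]
\emph{Step 1.} Check that $D_0$ is a normal subgroup of $(N,+)$ and a left ideal. Indeed, $n(n'+d)-nn'=0\in D_0$, and closure under sums and conjugates follows by substituting $n'+d_1$ for $n'$, and similarly for conjugates.

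\emph{Step 2.} Show that $D_0$ is proper. We would use the identity of $N$, or the existence of non-zero-multipliers in a planar nearring, to exhibit $n,n',d$ with $n(n'+d)\neq nn'$.

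\emph{Step 3 (main step).} Show that every proper left ideal $L$ lies in $D_0$. Suppose instead that some $l\in L$ and some $n,n'$ satisfy $c:=n(n'+l)-nn'\neq 0$. Then $c\in L$. We would then use planarity: equations of the form $xa=xb+c$ with $a\not\equiv b$ have unique solutions, and right multiplications $x\mapsto xa$ are fixed-point-free endomorphisms. With these, and using that $L$ is normal and closed under the left-ideal operation, we aim to show that $L$ meets every orbit of the planar action. From this we would conclude $L=N$. This is the step we expect to be the main obstacle, because left multiplications are not additive and we must manufacture enough elements of $L$ from the single non-zero $c$. The first thing we would try is to apply the left-ideal condition with $n$ ranging over representatives of the classes modulo $\equiv$, and then invoke unique solvability to reach all of $N$.

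Given Steps 1--3, $D_0$ is a proper left ideal containing every proper left ideal. So $D_0$ is the greatest proper left ideal, and by Lemma~\ref{lem:greatestideal} this gives $D_0=D$. Now let $H$ be a normal subgroup of $(N,+)$ with $H\subseteq D$. For $h\in H$ we have $h\in D_0$, hence $n(n'+h)-nn'=0\in H$ for all $n,n'$. Therefore $H$ is a left ideal, and it is proper whenever $H\subseteq D\neq N$. This gives the converse inclusion and completes the characterization.
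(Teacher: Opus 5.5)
The paper does not prove this lemma; it only cites Ke's survey, so your outline has to stand on its own. Your reduction is the right one. If every proper left ideal lies in $D_0=\{d \mid n(n'+d)=nn' \text{ for all } n,n'\in N\}$, then so does their sum $D$. A normal subgroup $H\subseteq D$ is then a left ideal, because $n(n'+h)-nn'=0\in H$.

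\textbf{The gap is Step~3.} It carries all the content, you leave it as an aim rather than an argument, and the plan you sketch points the wrong way. You fix one left multiplier $n$ and try to grow $L$ from a single element $c$, letting $n$ run over representatives of $\equiv$-classes. But $\equiv$ (where $a\equiv b$ means $xa=xb$ for all $x$) sorts right factors, not left multipliers. Also, not all right multiplications are fixed-point-free: in planar nearrings built from Ferrero pairs, right multiplication by an orbit representative is the identity map.

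The missing idea is to fix the two right factors and let the left multiplier vary. Suppose $l\in L$ and $n'\in N$ satisfy $a:=n'+l\not\equiv b:=n'$. For each $c\in N$, planarity gives some $x$ with $xa=xb+c$. The left-ideal condition and normality of $L$ then give
\[
c=-xb+xa=-xb+\bigl(x(n'+l)-xn'\bigr)+xb\in L .
\]
Hence $L=N$. So a proper left ideal satisfies $n'+l\equiv n'$ for all $l\in L$ and $n'\in N$, i.e.\ $L\subseteq D_0$. Only the existence of solutions is used; orbits and uniqueness play no role.

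\textbf{Step~1 has an unjustified claim, but you do not need it.} You assert that $D_0$ is closed under conjugates ``similarly''. The substitution $n'\mapsto n'+d_1$ that handles sums does not handle $-a+d+a$, because $\equiv$ is not compatible with addition: from $z+d\equiv z$ you cannot infer $z+d+a\equiv z+a$.

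You need neither normality of $D_0$ nor Step~2. For Step~2, incidentally, no identity is available where this lemma is used. It suffices that $D_0$ is a subgroup. Sums are handled by your substitution, and $n(n'-d)=nn'$ follows by applying the defining identity at $n'-d$. Then $D$, the sum of the proper left ideals (Lemma~\ref{lem:greatestideal}), lies in $D_0$ by the corrected Step~3. Any normal subgroup $H\subseteq D$ is therefore a left ideal, and it is proper because $D\neq N$.

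So in your final paragraph, replace the claim that $D_0$ is the greatest proper left ideal and equals $D$ by this direct inclusion $D\subseteq D_0$. Equality $D_0=D$ is never needed, and you have not established it.
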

	
	Similarly, Theorem~\ref{thm:maloneuniversal} and Corollary~\ref{cor:maloneclique} can be proved for planar nearrings using Lemmas~\ref{lem:greatestideal} and \ref{lem:idealsofplanarnr}.
	
	Now, we investigate the complement ideal graphs and supplement ideal graphs of planar and Malone trivial nearrings defined on elementary Abelian $2$-groups. Recall that every finite elementary Abelian $2$-group $(\mathbb{Z}_2^n, +)$ forms an $n$-dimensional vector space over $\mathbb{Z}_2$. In a vector space, every subspace is a direct summand. Consider $N$ to be a planar nearring or a Malone trivial nearring on $(\mathbb{Z}_2 ^n, +)$ and fix a subspace $M$ of dimension $m$ to be the unique maximal ideal. If $N$ is a planar nearring, then by Lemmas~\ref{lem:greatestideal} and \ref{lem:idealsofplanarnr}, the proper ideals of $N$ are exactly the subspaces of $M$. If $N$ is a Malone trivial nearring, then by the property of ideals in Malone trivial nearring, the proper ideals of $N$ are exactly the subspaces of $M$. Therefore we can associate dimension to ideals of $N$. Also, every ideal except $M$ is a complement ideal. Hence any two proper ideals will be adjacent in $\CIG(N)$ if and only if their sum is not $M$. In particular, if $m$ is even, then any two $\frac m 2$-dimensional ideals are adjacent in $\CIG(N)$ if and only if they have a non-trivial intersection.
	
	We use the following results from \cite{chajda2019lattice} to prove some properties of $\CIG(N)$ and $\SIG(N)$.
	
	Let $m$ be the dimension of a vector space $V$ over $F$ and $q$ be the cardinality of the field $F$. Define $a_0 = 1$, $a_n = \prod_{i=1}^{n} (q^i - 1)$. Then
	\begin{enumerate}[label = (\roman*)]
		\item the number of $d$-dimensional subspaces of $V$ is $\frac {a_m}{a_d a_{m-d}}$.
		\item the number of complements of a $d$-dimensional subspace is $q^{d(m-d)}$.
		\item the number of $e$-dimensional subspaces containing a $d$-dimensional subspace is $\frac {a_{m-d}}{a_{m-e} a_{e-d}}$.	
	\end{enumerate}
	
	\vspace{0.5cm}
	
	Throughout the rest of this section, let $N = (\mathbb{Z}_2 ^n, +, \cdot)$ be a planar or Malone trivial nearring and $M$ be its unique maximal ideal. In the following results, we determine the maximum cliques of $\CIG(N)$ and $\SIG(N)$, and also give expressions for the clique numbers of these graphs. In $\CIG(N)$, two cases occur depending on the dimension of $M$ -- if the dimension is odd, then there is a unique maximum clique, while if the dimension is even, then the maximum cliques are not unique. The number of maximum cliques in the latter case is also given.
	
	\begin{theorem}\label{lem:odddim}
		If $M$ is of odd dimension $m$, then the ideals with dimension at most $ \lfloor \frac m 2 \rfloor$ together with $N$ form a maximum clique in $\CIG(N)$.
	\end{theorem}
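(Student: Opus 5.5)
The plan is to work entirely inside the lattice of subspaces of $M$, using the description given just before the statement: the proper ideals of $N$ are exactly the subspaces of $M$, every ideal other than $M$ is a complement ideal, and $N$ is a universal vertex of $\CIG(N)$ by Remark~\ref{rem:0andNinCIGSIG}. Write $m = 2k+1$, so $\lfloor \frac m2 \rfloor = k$, and let $\mathcal C$ consist of $N$ together with all subspaces $I \subseteq M$ with $\dim I \le k$.

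First I show that $\mathcal C$ is a clique. For $I, J \in \mathcal C$ with $I, J \neq N$, we have
\[
\dim(I+J) \le \dim I + \dim J \le 2k < m ,
\]
so $I + J \neq M$. Moreover $I + J$ is a proper ideal, hence a complement, so $I \sim J$. Adjacency with $N$ is automatic, since $N$ is universal.

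Next I show that no clique is larger, via a pairing argument. Any clique $\mathcal K$ may be assumed to contain $N$, because adding $N$ preserves the clique property. The vertex $M$ is adjacent to nothing except $N$: for any proper $J$, $M + J = M$, which is not a complement (compare the discussion of Theorem~\ref{thm:maloneuniversal} and Lemma~\ref{lem:indecomposable}). So if $M \in \mathcal K$, then $|\mathcal K| \le 2 \le |\mathcal C|$. Otherwise, the proper members of $\mathcal K$ are subspaces of $M$ that pairwise sum to a proper subspace of $M$.

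To compare with $\mathcal C$, fix a nondegenerate symmetric bilinear form on $M \cong \mathbb Z_2^m$, for instance the standard dot product, and use the duality $I \mapsto I^\perp$, which reverses inclusion and sends dimension $d$ to $m-d$. This is a bijection from subspaces of dimension $\ge k+1$ onto subspaces of dimension $\le k$. An ideal $I$ with $\dim I \ge k+1$ can never be in the same clique as $I^\perp$, since $\dim I^\perp \le k$ and
\[
I + I^\perp = (I^\perp \cap I)^\perp ,
\]
which fails to be $M$ whenever $I \cap I^\perp \neq 0$. That identity shows the pairing $I$ versus $I^\perp$ does not work. I expect this step to be the main obstacle, and I would replace the pairing.

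Instead I would use a counting/injection argument. The sets $L = \{I \in \mathcal K : \dim I \le k\}$ and $H = \{I \in \mathcal K : \dim I \ge k+1\}$ are to be compared. Two members of $H$ of dimensions $d, e$ with $d + e \ge m+1$ must intersect in dimension at least $d+e-m$; they can still sum to a proper subspace. So the correct key fact is that all members of $\mathcal K \setminus \{N\}$ lie in a common hyperplane $W$, i.e.\ $\sum_{I \in \mathcal K \setminus\{N\}} I \neq M$. This does not follow from pairwise adjacency in general, and this is the point I expect to be delicate. I would attempt an injective map $\phi$ from $H$ into the set of subspaces of dimension $\le k$ that are not in $L$. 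A natural first candidate is $\phi(I) = I^\perp$ relative to a form chosen so that $\phi(I)$ is incompatible with $\mathcal K$. I would try to prove incompatibility by showing that $I^\perp + J = M$ for some $J \in \mathcal K$ whenever $I^\perp \in L$, using $\dim$ counts together with the oddness of $m$ to rule out the middle dimension. Oddness is exactly what prevents a subspace of dimension $k$ from pairing with another of dimension $k$ to give dimension $m$.

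Granting the injection, we get $|\mathcal K| = 1 + |L| + |H| \le |\mathcal C|$, so $\mathcal C$ is a maximum clique.
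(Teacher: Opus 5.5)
Your first half is correct and matches the paper: $\mathcal C$ is a clique, $N$ is universal, and $M$ is adjacent only to $N$. The gap is the upper bound. The proposal ends with ``granting the injection'', and no injection is ever produced. You rightly discard $I\mapsto I^\perp$: over $\mathbb Z_2$ a subspace can meet its orthogonal, so $I+I^\perp$ may be proper and $I^\perp$ may lie in $\mathcal K$. But the ``key fact'' you put in its place, that all members of $\mathcal K\setminus\{N\}$ lie in a common hyperplane, is not merely unproved; it is false. For $m\ge 3$, $\mathcal C$ itself is a clique containing every $1$-dimensional subspace of $M$, and these span $M$. So as written, the maximality of $\mathcal C$ is not established. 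The paper's own argument only rules out cliques containing an $(m-1)$-dimensional ideal, i.e.\ cliques inside the subspace lattice of a hyperplane, so it does not supply this step either.

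The missing idea is to replace the canonical map $I\mapsto I^\perp$ by a matching of the complement relation. Let $\mathcal S_d$ be the set of $d$-dimensional subspaces of $M$. For $A\in\mathcal S_d$ and $B\in\mathcal S_{m-d}$ we have $A+B=M$ if and only if $A\cap B=\{0\}$, and then $A\nsim B$ in $\CIG(N)$ because $M$ is not a complement. Let $\Gamma_d$ be the bipartite graph on $\mathcal S_d\cup\mathcal S_{m-d}$ whose edges are the complementary pairs. It is $2^{d(m-d)}$-regular, by the count of complements quoted just before the theorem. Both sides have $\frac{a_m}{a_d a_{m-d}}$ elements, and they are disjoint for $0\le d\le k$ because $m$ is odd.

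Now suppose $X\subseteq\mathcal S_d$ and $Y\subseteq\mathcal S_{m-d}$ span no edge. The $2^{d(m-d)}|Y|$ edges leaving $Y$ end in $\mathcal S_d\setminus X$, and each vertex there absorbs at most $2^{d(m-d)}$ of them, so $|X|+|Y|\le|\mathcal S_d|$. Equivalently, by Hall's theorem $\Gamma_d$ has a perfect matching $\mu_d$, and $B\mapsto\mu_d(B)$ is exactly the injection you wanted: $\mu_d(B)$ is a complement of $B$, hence cannot lie in $\mathcal K$.

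Take $X=\mathcal K\cap\mathcal S_d$ and $Y=\mathcal K\cap\mathcal S_{m-d}$, and sum over $d=0,\dots,k$. The pairs $\{d,m-d\}$ partition $\{0,\dots,m\}$ precisely because $m$ is odd, so this gives $|\mathcal K|\le 1+\sum_{d=0}^{k}\frac{a_m}{a_d a_{m-d}}=|\mathcal C|$. This also absorbs the vertex $M$, which is paired with $\{0\}$ at $d=0$, so no separate case is needed.
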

	
	\begin{proof}
		Since every ideal of $N$ except $M$ is a complement and sum of two ideals with dimension at most $ \lfloor \frac m 2 \rfloor$ is not $M$, we get a complete subgraph $H$, with vertex set containing the ideals with dimension at most $\lfloor \frac m 2 \rfloor$, which is $\sum_{i=0}^{ \lfloor \frac m 2 \rfloor} \left( \frac {a_m}{a_i a_{m-i}} \right)$ in number, together with $N$. Now, it remains to show that $H$ is a maximal complete subgraph. Consider a subgraph $K$ containing an $(m-1)$-dimensional ideal $I$. Now, $K$ can contain at most $\sum_{i=0}^{m-1} \left( \frac {a_{m-1}}{a_i a_{m-1-i}} \right)$ number of ideals of dimension less than $m-1$ and adjacent to $I$. This number is clearly less than the number of vertices in $H$. Therefore, $K$ cannot be a maximum clique. Hence the ideals with dimension at most $ \lfloor \frac m 2 \rfloor$ together with $N$ form a maximum clique in $\CIG(N)$.
	\end{proof}
	
	The clique number of $\CIG(N)$ when the dimension of $M$ is odd is given in the corollary below.
	
	\begin{corollary}
		If $M$ is of odd dimension $m$, then
		\begin{equation*}
			\omega(\CIG(N)) = \sum_{i=0}^{ \left\lfloor \frac m 2 \right\rfloor} \left( \frac {a_m}{a_i a_{m-i}} \right) + 1.
		\end{equation*}
	\end{corollary}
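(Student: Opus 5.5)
The corollary should follow by counting the vertices of the maximum clique produced in Theorem~\ref{lem:odddim}. I would first restate the setting. The ideals of $N$ are $N$ itself together with the subspaces of the $m$-dimensional space $M$. Theorem~\ref{lem:odddim} gives a maximum clique $H$ of $\CIG(N)$ made up of every ideal of dimension $i$ with $0 \le i \le \lfloor \frac m 2 \rfloor$, together with $N$. By definition, $\omega(\CIG(N))$ is the order of a maximum clique, so it equals $|V(H)|$, and all that remains is to count these vertices.

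For each $i$ with $0 \le i \le \lfloor \frac m2 \rfloor$, the proper ideals of dimension $i$ are exactly the $i$-dimensional subspaces of $M$. Quoting result (i) from \cite{chajda2019lattice} with $q = 2$ and ambient dimension $m$, there are $\frac{a_m}{a_i a_{m-i}}$ of them. Different dimensions give disjoint sets of vertices, so adding over $i$ gives the sum $\sum_{i=0}^{\lfloor m/2\rfloor} \frac{a_m}{a_i a_{m-i}}$. The ideal $N$ is not a subspace of $M$, so it is counted separately, which contributes the extra $+1$.

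The one point needing care is that the $i=0$ term must count the zero ideal exactly once. Since $a_0 = 1$, that term equals $1$, so $\{0\}$ is indeed counted once. I would also check that $M$ is excluded: it has dimension $m > \lfloor \frac m2 \rfloor$, so it never appears in the sum. This is consistent with $M$ not being a complement (Lemma~\ref{lem:indecomposable}, since $N$ is indecomposable).

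The real content, namely that $H$ is complete and has maximum size, is already supplied by Theorem~\ref{lem:odddim}. No step here is a genuine obstacle; the only thing to get right is the bookkeeping of which ideals are counted.
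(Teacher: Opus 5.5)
Your proposal is correct and follows the paper's route: the corollary is just the vertex count of the clique in Theorem~\ref{lem:odddim}, whose proof already records $\sum_{i=0}^{\lfloor m/2\rfloor}\frac{a_m}{a_i a_{m-i}}$ ideals of dimension at most $\lfloor \frac m2\rfloor$ plus the vertex $N$, and your bookkeeping of $\{0\}$, $M$ and $N$ is right. All of the substance lies in the maximality half of that theorem, and the paper's proof of it only examines cliques that contain an $(m-1)$-dimensional ideal; to make it airtight, use the following argument. Complementary subspaces of dimensions $i$ and $m-i$ span $M$, so they are non-adjacent, and this complementarity relation is a $2^{i(m-i)}$-regular bipartite graph on two sets of size $\frac{a_m}{a_i a_{m-i}}$, so it has a perfect matching. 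Hence a clique has at most $\frac{a_m}{a_i a_{m-i}}$ members of dimension $i$ or $m-i$, and summing over $0\le i\le\lfloor \frac m2\rfloor$ (which pairs off all dimensions because $m$ is odd) gives the upper bound.
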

	
	\begin{theorem}\label{lem:evendim}
		If $M$ is of even dimension $m \geq 4$, then the ideals with dimension less than $\frac m 2$ together with $N$ and  $\frac {a_{m-1}} {a_ {\frac m 2}a_{\frac m 2 - 1}}$ number of ideals of dimension $\frac m 2$ form a maximum clique in $\CIG(N)$ and they are $2^{m+1} -2$ in number.
	\end{theorem}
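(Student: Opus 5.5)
The plan is to use the description of $\CIG(N)$ given just before Theorem~\ref{lem:odddim}. The vertices are $N$, $M$ and the proper subspaces of $M$. Every ideal other than $M$ is a complement. Hence two distinct proper ideals $I,J$ are adjacent if and only if $I+J\neq M$, and $N$ is a universal vertex. Also $M$ is adjacent only to $N$, since $M+I=M$ for every proper $I$. So, apart from the trivial clique $\{M,N\}$, a maximum clique consists of $N$ together with a largest family $\mathcal F$ of subspaces of $M$, each different from $M$, such that $\dim(I+J)<m$ for all $I,J\in\mathcal F$. I will show that $|\mathcal F|$ is maximised by the family described in the statement, and then count it.

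\emph{Step 1 (the clique).} Fix a hyperplane $H$ of $M$, that is, an $(m-1)$-dimensional ideal. Let $\mathcal F_H$ consist of all subspaces of dimension $<\frac m2$ together with all $\frac m2$-dimensional subspaces contained in $H$. Any two members of $\mathcal F_H$ are adjacent, for the following reasons.
\begin{itemize}
\item If one of them has dimension $<\frac m2$, then $\dim(I+J)<\frac m2+\frac m2=m$.
\item If both are $\frac m2$-dimensional subspaces of $H$, then $I+J\subseteq H\neq M$.
\end{itemize}
By the counting formula (iii) with $d=m-1$ and $e=\frac m2$, the number of $\frac m2$-dimensional subspaces of $H$ is $\frac{a_{m-1}}{a_{m/2}a_{m/2-1}}$, matching the statement. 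Adding $N$ gives the clique.

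\emph{Step 2 (maximality among cliques with no large vertex).} Suppose $\mathcal F$ contains no subspace of dimension $>\frac m2$. Then all vertices of dimension $<\frac m2$ can be added to $\mathcal F$ freely, so only the $\frac m2$-dimensional members matter. Two subspaces $I,J$ of dimension $\frac m2$ satisfy $I+J\neq M$ if and only if $I\cap J\neq\{0\}$. The problem is therefore to find a largest pairwise nontrivially-intersecting family of $k$-dimensional subspaces of a $2k$-dimensional space over $\mathbb Z_2$, where $k=\frac m2$. I will invoke the Erdős–Ko–Rado theorem for vector spaces (Hsieh; Greene–Kleitman for $n=2k$). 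It gives the bound $\binom{2k-1}{k-1}_2$, attained by a star (all $k$-subspaces through a fixed nonzero vector) or dually by all $k$-subspaces of a fixed hyperplane. This bound equals $\frac{a_{m-1}}{a_{m/2}a_{m/2-1}}$. The non-uniqueness of the extremal families is why maximum cliques are not unique when $m$ is even. The hypothesis $m\ge4$ avoids the degenerate case $k=1$.

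\emph{Step 3 (main obstacle: vertices of dimension $>\frac m2$).} I must rule out larger cliques containing some $I$ with $d=\dim I>\frac m2$ and $d\le m-1$. I would argue by exchange, as in the proof of Theorem~\ref{lem:odddim}.
\begin{itemize}
\item Any $J$ adjacent to $I$ satisfies $I+J\neq M$. In particular $J$ cannot be a complement of $I$ in $M$, and by formula (ii) there are $2^{d(m-d)}$ such complements, all of dimension $m-d<\frac m2$.
\item More generally, $J$ cannot be a subspace with $J+I=M$. For $d=m-1$, every $J$ adjacent to $I$ lies inside $I$. The clique then has at most the number of subspaces of an $(m-1)$-space plus $N$, and a direct comparison shows this is smaller.
\item For intermediate $d$, I would show that replacing all vertices of dimension $>\frac m2$ by the excluded low-dimensional subspaces strictly enlarges the clique. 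For the comparison I would use the counts (i)–(iii), bounding the number of vertices of dimension $>\frac m2$ compatible with a fixed one by the number of subspaces containing it.
\end{itemize}
This inequality bookkeeping is the delicate part.

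\emph{Step 4 (the total).} Finally I would compute the order of the clique as
\[
\sum_{i=0}^{\frac m2-1}\frac{a_m}{a_ia_{m-i}}+\frac{a_{m-1}}{a_{m/2}a_{m/2-1}}+1
\]
and reconcile it with the value $2^{m+1}-2$ claimed in the statement. A check at $m=4$ gives $1+15+7+1=24$, whereas $2^{5}-2=30$. So before writing the final count I would recheck whether $2^{m+1}-2$ is meant to count this clique's vertices or something else, such as the number of maximum cliques. The number of maximum cliques, namely stars plus hyperplane families, is $2(2^m-1)=2^{m+1}-2$, which suggests that this is what the figure counts.
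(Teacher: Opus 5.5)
Your Step~1 and your reading of $2^{m+1}-2$ as the number of maximum cliques agree with the paper, whose proof ends by counting $2\times(2^m-1)$ maximum cliques. Your Step~2 (Erd\H{o}s--Ko--Rado at level $\frac m2$) is more careful than the paper's treatment of that level. There is one small slip: the number of $\frac m2$-dimensional subspaces of a hyperplane comes from formula (i) applied to the $(m-1)$-dimensional space, not from (iii), which counts subspaces \emph{containing} a given one.

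The genuine gap is Step~3, which is where maximality is actually decided. You never show that a clique containing some $I$ with $\frac m2<\dim I<m-1$ is no larger than the clique of Step~1. Saying that replacing the high-dimensional vertices by the excluded low-dimensional ones ``strictly enlarges the clique'' is exactly the inequality that needs proof. The tool you suggest, bounding the high-dimensional vertices compatible with a fixed one by the number of subspaces containing it, gives no control, because the high-dimensional members of a clique need not share a common subspace. The paper's one-line remark that the number of ideals inside $P$ decreases as the dimension grows does not fill this gap either.

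The missing ingredient is a double count built on formula (ii), which you already quote. Let $\mathcal F$ be the set of subspaces of $M$ in a clique other than $\{M,N\}$, and let $\mathcal F_j$ be its $j$-dimensional members. For $\frac m2<d\le m-1$, join a $d$-dimensional $I$ to an $(m-d)$-dimensional $J$ when $I\oplus J=M$. Both sides of this bipartite graph have $\frac{a_m}{a_da_{m-d}}$ vertices, and every vertex has degree $2^{d(m-d)}$. The neighbourhood $\Gamma(\mathcal F_d)$ is disjoint from $\mathcal F$, and counting edges gives $|\Gamma(\mathcal F_d)|\ge|\mathcal F_d|$. Hence $|\mathcal F_d|+|\mathcal F_{m-d}|\le\frac{a_m}{a_da_{m-d}}$. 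Summing over $d$ and adding $|\mathcal F_0|\le 1$, your bound $|\mathcal F_{m/2}|\le\frac{a_{m-1}}{a_{m/2}a_{m/2-1}}$, and the vertex $N$ gives exactly the order of the Step~1 clique.

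To justify the count $2^{m+1}-2$ you need two further facts that your proposal does not supply.
\begin{enumerate}
\item The inequality is strict whenever $\mathcal F_d\neq\varnothing$. Equality would force every complement of each $J\in\Gamma(\mathcal F_d)$ into $\mathcal F_d$. But take $I\in\mathcal F_d$, a complement $J$ of $I$, and a surjection $\phi\colon I\to J$, which exists since $d>m-d$. Then $\{x+\phi(x)\mid x\in I\}$ is a complement of $J$ whose sum with $I$ is $M$. It follows that every maximum clique consists of $N$, all subspaces of dimension $<\frac m2$, and a maximum intersecting family at level $\frac m2$.
\item The stars and the hyperplane families are the \emph{only} maximum intersecting families of $\frac m2$-subspaces of an $m$-space (Godsil--Newman, Tanaka). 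The Erd\H{o}s--Ko--Rado bound of Hsieh and Greene--Kleitman gives only the size, not this uniqueness.
\end{enumerate}
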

	
	\begin{proof}
		Let $I$ and $J$ be two ideals whose dimensions are less than $\frac m 2$. Since $I + J \neq M$ and every ideal of $N$ except $M$ is a complement, $I \sim J$ in $\CIG(N)$. Hence, the ideals with dimension less than $\frac m 2$, together with $N$, form a complete subgraph $H$. Also, every ideal of dimension $\frac m 2$ is adjacent to every vertex of $H$.
		
		Suppose that $K$ and $L$ are two ideals of dimension $\frac m 2$. Then $K \sim L$ if and only if $K + L \neq M$. That is, $K + L \subseteq P$ where $P$ is an ideal of dimension $m-1$. Therefore we can choose $\frac {a_{m-1}} {a_ {\frac m 2}a_{\frac m 2 - 1}}$ number of ideals with dimension $\frac m 2$ which are contained in $P$ or contains a $1$-dimensional ideal and are mutually adjacent in $\CIG(N)$. As the dimension of ideals considered increases, the number of ideals contained in $P$ decreases. Therefore $H$, together with $\frac {a_{m-1}} {a_ {\frac m 2}a_{\frac m 2 - 1}}$ number of $\frac m 2$-dimensional ideals, form a maximum clique. Since there are two ways to choose vertices to form a clique, i.e., the number of ideals contained in an $(m-1)$-dimensional ideal and the number of ideals containing a $1$-dimensional ideal, and the number of $1$-dimensional ideals are $2^m-1$ in number, we get $2\times (2^m-1)$ number of maximum cliques. 
	\end{proof}
	
	The clique number of $\CIG(N)$ when the dimension of $M$ is even is given in the corollary below.
	
	\begin{corollary}
		If $M$ is of even dimension $m \geq 4$, then
		\begin{equation*}
			\omega(\CIG(N)) = \sum_{i=0}^{\frac m 2 -1} \left( \frac {a_m}{a_i a_{m-i}} \right) + \frac {a_{m-1}} {a_ {\frac m 2}a_{\frac m 2 - 1}} + 1.
		\end{equation*}
	\end{corollary}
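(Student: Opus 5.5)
The plan is to read the clique number off the maximum clique constructed in Theorem~\ref{lem:evendim}, and then count its vertices with the subspace-counting formulas recalled from \cite{chajda2019lattice}, taking $q = 2$.

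\textbf{Step 1: the ideals of dimension below $\frac m2$.} The proper ideals of $N$ are exactly the subspaces of the $m$-dimensional space $M$. By formula (i), the number of $i$-dimensional ideals is $\frac{a_m}{a_i a_{m-i}}$. Summing over $0 \le i \le \frac m2 - 1$ gives the first term. The case $i = 0$ counts the ideal $\{0\}$.

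\textbf{Step 2: the $\frac m2$-dimensional ideals in the clique.} In the clique of Theorem~\ref{lem:evendim}, these are the $\frac m2$-dimensional subspaces contained in a fixed $(m-1)$-dimensional ideal $P$. Applying formula (i) inside $P$, where the ambient dimension is $m-1$ and the subspace dimension is $\frac m2$, gives
\[
\frac{a_{m-1}}{a_{\frac m2}\, a_{m-1-\frac m2}} = \frac{a_{m-1}}{a_{\frac m2}\, a_{\frac m2 -1}} .
\]
I would also check the other family of maximum cliques: the $\frac m2$-dimensional ideals containing a fixed $1$-dimensional ideal. By formula (iii) with $d = 1$ and $e = \frac m2$, this family has $\frac{a_{m-1}}{a_{\frac m2 -1}\, a_{\frac m2 -1}}\cdot\frac{a_{\frac m2 -1}}{a_{\frac m2}}$ members. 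Written in the same form, this is $\frac{a_{m-1}}{a_{m-\frac m2}\, a_{\frac m2 -1}}$, which equals the count above. So the value of $\omega$ does not depend on which maximum clique is used.

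\textbf{Step 3: the vertex $N$.} Adding the universal vertex $N$ contributes the final $+1$. Adding the three contributions gives the stated formula.

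\textbf{Expected obstacle: maximality.} The count above is only a lower bound; the real content is that no clique is larger, and for that I am relying on Theorem~\ref{lem:evendim}. If that argument needs to be made rigorous, I would proceed as follows.
\begin{itemize}
\item $M$ can lie in no clique of size at least $3$, because $M + I = M$ is not a complement for any proper ideal $I$.
\item Two $\frac m2$-dimensional ideals are adjacent exactly when their sum is not $M$. Since their dimensions add up to $m$, this happens exactly when they intersect non-trivially. So the $\frac m2$-dimensional vertices of a clique form an intersecting family of $k$-subspaces of a $2k$-dimensional space, where $k = \frac m2$.
\item The vector-space Erdős--Ko--Rado theorem in the case $n = 2k$ bounds the size of such a family by the Gaussian binomial $\binom{2k-1}{k-1}_2 = \frac{a_{m-1}}{a_{\frac m2} a_{\frac m2 -1}}$.
\item Ideals of dimension greater than $\frac m2$ exclude too many smaller ideals from the clique, as in the comparison made in the proof of Theorem~\ref{lem:odddim}.
\end{itemize}
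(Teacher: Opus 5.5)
Your proposal is correct and matches the paper, where the corollary is simply the vertex count of the maximum clique exhibited in Theorem~\ref{lem:evendim}. Your counts agree with it, including your check that the family of $\frac m2$-dimensional ideals through a fixed $1$-dimensional ideal has the same size $\frac{a_{m-1}}{a_{m/2}a_{m/2-1}}$.

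If you want maximality without leaning on that theorem, your last fallback bullet is too vague: the comparison in Theorem~\ref{lem:odddim} only treats cliques containing an $(m-1)$-dimensional ideal. It is easily repaired.
\begin{itemize}
\item Fix $k<\frac m2$. A clique contains no complementary pair from the $k$- and $(m-k)$-dimensional levels.
\item The complementarity graph between these two levels is a $2^{k(m-k)}$-regular bipartite graph with $\frac{a_m}{a_k a_{m-k}}$ vertices on each side, so it has a perfect matching.
\item Hence the clique meets the two levels in at most $\frac{a_m}{a_k a_{m-k}}$ ideals.
\end{itemize}
Summing over $k$, and adding Erd\H{o}s--Ko--Rado for the middle level and $+1$ for $N$, gives exactly the stated value.
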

	
	\begin{remark}
		If $N$ is a planar nearring, then no proper non-trivial ideal of $N$ is a supplement. Therefore any two ideals are adjacent in $\SIG(N)$ if and only if their intersection is trivial.
	\end{remark}
	
	\begin{theorem}
		Let $M$ be of dimension $m$. Then the ideals of dimension $1$ together with $\{0\}$ form a maximum clique in $\SIG(N)$ of size $2^m$.
	\end{theorem}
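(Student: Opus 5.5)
We first record which ideals are supplements and when two ideals are adjacent in $\SIG(N)$. By the discussion preceding the statement, the proper ideals of $N$ are exactly the subspaces of $M$. By Lemma~\ref{lem:N!=I+J} (or the planar remark), no proper non-trivial ideal is a supplement, so the only supplements are $\{0\}$ and $N$. Hence two distinct proper ideals $I,J$ are adjacent in $\SIG(N)$ exactly when $I\cap J=\{0\}$. The vertex $N$ is adjacent to an ideal $J$ only when $J\cap N=J$ is a supplement, that is, only to $\{0\}$. Since $M$ has dimension $m$ over $\mathbb Z_2$, it has $\frac{a_m}{a_1a_{m-1}}=2^m-1$ one-dimensional subspaces. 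These are pairwise distinct atoms, so any two of them intersect trivially and are adjacent. Together with the universal vertex $\{0\}$ they form a complete subgraph with $2^m$ vertices.

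Next we show this complete subgraph is a clique, that is, maximal. Adding $N$ fails, since $N$ is adjacent only to $\{0\}$. Adding a proper ideal $K$ of dimension at least $1$ fails, because $K$ contains some one-dimensional subspace $L$. Either $K=L$, which is already in the set, or $K\cap L=L\neq\{0\}$, so $K\nsim L$.

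Finally we bound the order of an arbitrary complete subgraph $C$. If $N\in C$, then $C\subseteq\{N,\{0\}\}$, so $|C|\le 2$. Otherwise, all non-trivial members of $C$ are non-zero subspaces of $M$ with pairwise trivial intersection. Choose a non-zero vector from each. These vectors are pairwise distinct: if two subspaces shared a chosen vector, that vector would lie in their intersection, which is trivial. The vectors are non-zero elements of $M$, of which there are $2^m-1$. Hence $C$ has at most $2^m-1$ non-trivial members, and with $\{0\}$ at most $2^m$ vertices. Therefore the clique above is maximum and $\omega(\SIG(N))=2^m$.

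The main obstacle is justifying that the supplements are only $\{0\}$ and $N$ in both the planar and the Malone cases. This follows from the greatest-proper-ideal results: Theorem~\ref{lem:uniqueness} for Malone trivial nearrings and Lemma~\ref{lem:greatestideal} for planar nearrings, combined with Lemma~\ref{lem:0andN}(i). The counting step is the injection of pairwise trivially-intersecting non-zero subspaces into the non-zero vectors of $M$.
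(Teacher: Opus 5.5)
Your proof is correct, and its first two steps match the paper's. Those steps are: the $2^m-1$ one-dimensional ideals together with $\{0\}$ are pairwise adjacent, and nothing can be added because every non-trivial ideal contains a line.

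The two arguments differ in the upper bound. The paper lets the one-dimensional members of a competing complete subgraph span an $r$-dimensional ideal. It then places the higher-dimensional members in the part generated by the remaining $m-r$ lines, bounds the number of $i$-dimensional members by $\lfloor (2^{m-r}-1)/(2^i-1)\rfloor$ for $2\le i\le\lfloor m/2\rfloor$, and maximizes $2^{m-r}+2^r-2$ over $r$. From this it also asserts that the clique is the unique maximum clique.

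Your injection replaces all of that with a single pigeonhole count: choose a non-zero vector in each non-trivial member, and these are distinct because pairwise intersections are trivial. This needs no optimization over $r$ and no placement of the higher-dimensional members. That placement is not justified in the paper: in $\mathbb Z_2^3$ the plane spanned by $e_1+e_2$ and $e_3$ meets the lines spanned by $e_1$ and $e_2$ trivially, yet meets their span. Your argument also disposes of the vertex $N$ explicitly, which the paper's count omits.

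You can recover the paper's uniqueness claim by strengthening your count slightly. The sets $I\setminus\{0\}$ of the non-trivial members are pairwise disjoint, so $\sum_I(2^{\dim I}-1)\le 2^m-1$. Hence a complete subgraph of order $2^m$ not containing $N$ must consist of $\{0\}$ and all the lines. This gives uniqueness only for $m\ge 2$; for $m=1$, the vertices $\{0\}$ and $N$ form a second maximum clique, so the paper's uniqueness claim fails there.

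Finally, both places where you dispose of $N$ silently use $m\ge 1$: adding $N$ fails only if a line exists, and you need $|C|\le 2\le 2^m$. That hypothesis is genuinely required by the statement itself. For $m=0$, the vertices $\{0\}$ and $N$ form a clique of order $2>2^0$, so you should state $m\ge 1$ explicitly.
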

	
	\begin{proof}
		Let $I$ and $J$ be $1$-dimensional ideals. Since $I \cap J = \{0\}$, $I$ is adjacent to $J$ in $\SIG(N)$. Therefore the $1$-dimensional ideals together with $\{0\}$ form a complete subgraph $H$ of order $2^m -1 + 1 = 2^m$. Since any ideal with dimension greater than $1$ contains at least one $1$-dimensional ideal, it cannot be adjacent to every vertex in $H$. Therefore $H$ is a maximal complete subgraph. Suppose that some of the $1$-dimensional ideals together with higher dimensional ideals form a complete subgraph say, $K$. Then choose $r$ number of $1$-dimensional ideals which generate an $r$-dimensional ideal. That is, we can choose $2^r - 1$ number of $1$-dimensional ideals which are adjacent to $r$-dimensional ideals which are generated by remaining $m-r$ ideals of dimension $1$. Therefore we can select $\lfloor \frac {2^{m-r} - 1} {2^i - 1} \rfloor$ number of $i$-dimensional ideals which are to be vertices of $K$. Since ideals of dimension greater than $\lfloor \frac m 2 \rfloor$ cannot be adjacent to any ideals of dimension  greater than or equal to $\lfloor \frac m 2 \rfloor$, $i$ varies from $2$ to $\lfloor \frac m 2 \rfloor$. Therefore the maximum number of vertices in $K$ is 
		\begin{equation}\label{eq:cliqueno}
			\sum_{i= 2}^{\left\lfloor \frac m 2 \right\rfloor} \left\lfloor \frac {2^{m-r} - 1} {2^i - 1} \right\rfloor  + 2^r - 1 \leq 2^{m-r} - 1 + 2^r - 1, 
		\end{equation}
		which is maximum only when $r = m$ or $0$. If $r = 0$, then note that the upper bound given in \eqref{eq:cliqueno} cannot be attained. Therefore $H$ is the unique maximum clique.
	\end{proof}
	
	\begin{example}
		The graphs $\CIG(N)$ and $\SIG(N)$ of the Malone trivial nearring $N$ defined on $\mathbb Z_2^4$ are shown in Figure~\ref{fig:MTEA(2,3)}.
		\begin{figure}[h]
			\centering
			\subfloat[$\CIG(N)$]{
				\includegraphics[scale=0.58]{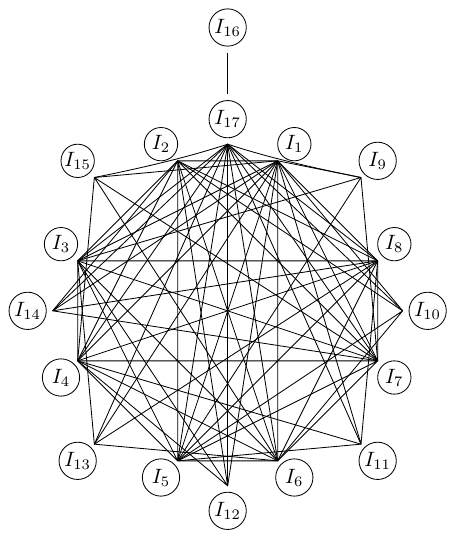}
				
			}
			\hspace{0.1\textwidth}
			\subfloat[$\SIG(N)$]{
				\includegraphics[scale=0.58]{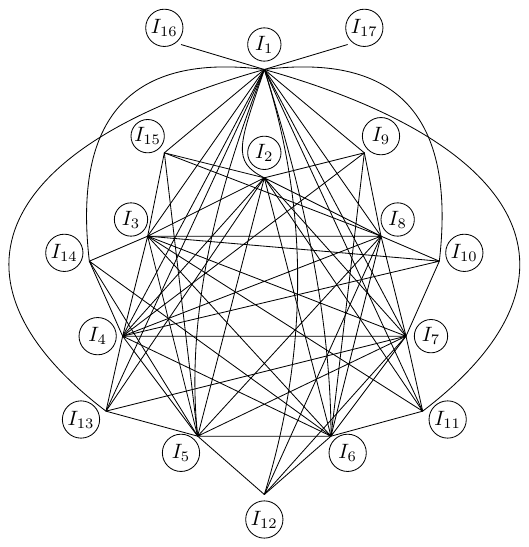}
			}
			\caption{Complement and supplement ideal graphs of Malone trivial nearring $N = \mathbb Z_2^4$}\label{fig:MTEA(2,3)}
		\end{figure}
		In $\CIG(N)$, the vertices $I_1$, $I_2$, $I_3$, $I_4$, $I_5$, $I_6$, $I_7$, $I_8$, and $I_{17}$ form the unique maximum clique, and in $\SIG(N)$, the vertices $I_1$, $I_2$, $I_3$, $I_4$, $I_5$, $I_6$, $I_7$, $I_8$ form the unique maximum clique.
	\end{example}
	
	\section*{Conclusion}
	In this article, we studied fundamental properties of supplement ideals and their dual concept, complement ideals, in nearrings. We observed that there exist Galois connections between $\mathfrak L(N)$ and $\mathfrak L(M_n(N))$ and gave one-to-one correspondences between complement and supplement ideals of nearrings and those of their matrix nearrings. Finally, we defined graphs associated with supplement and complement ideals of nearrings and investigated their structure. In particular, we obtained the clique numbers of these graphs for planar and Malone trivial nearrings defined on $(\mathbb Z_2^n, +)$.

	\section*{Acknowledgement} 
	The first author acknowledges Dr.~TMA Pai Fellowship, MAHE, Manipal for financial support. All the authors acknowledge Manipal Institute of Technology (Manipal), Manipal Academy of Higher Education for kind encouragement.

	\bibliographystyle{plainnat}
	\bibliography{bibliography}
\end{document}